\date{\today}
\newcommand{\bbE}{{\mathbb{E}}}
\newcommand{\bbD}{{\mathbb{D}}}
\newcommand{\bbN}{{\mathbb{N}}}
\newcommand{\bbR}{{\mathbb{R}}}
\newcommand{\bbZ}{{\mathbb{Z}}}
\newcommand{\bbC}{{\mathbb{C}}}
\DeclareMathAlphabet{\mathpzc}{OT1}{pzc}{m}{it}
\newcommand{\cA}{{\mathcal{A}}}
\newcommand{\cB}{{\mathcal{B}}}
\newcommand{\cD}{{\mathcal{D}}}
\newcommand{\cE}{{\mathcal{E}}}
\newcommand{\cF}{{\mathcal{F}}}
\newcommand{\cH}{{\mathcal{H}}}
\newcommand{\cK}{{\mathcal{K}}}
\newcommand{\cN}{{\mathcal{N}}}
\newcommand{\cR}{{\mathcal{R}}}
\newcommand{\cS}{{\mathcal{S}}}
\newcommand{\cV}{{\mathcal{V}}}
\newcommand{\cU}{{\mathcal{U}}}
\newcommand{\cW}{{\mathcal{W}}}
\newcommand{\ff}{{\mathfrak{f}}}
\newcommand{\fj}{{\mathfrak{j}}}
\newcommand{\fw}{{\mathfrak{w}}}
\newcommand{\bTh}{{\mathbf{\Theta}}}
\newcommand{\bs}{{\mathbf{s}}}
\newcommand{\bl}{{\bm{\lambda}}}
\newcommand{\e}{{\epsilon}}
\renewcommand{\k}{\varkappa}
\newcommand{\vt}{\vartheta}
\newcommand{\pd}{{\partial}}
\newcommand{\un}{\underline}
\def\restr#1{\,\vrule\,\lower1ex\hbox{$#1$}}
\def\a{\alpha}
\def\b{\beta}
\def\d{\delta}
\def\e{\epsilon}
\def\ve{\varepsilon}
\def\g{\gamma}
\def\G{\Gamma}
\def\k{\kappa}
\def\vk{\varkappa}
\def\l{\lambda}
\def\u{\upsilon}
\def\O{\Omega}
\def\s{\sigma}
\def\t{\theta}
\def\T{\Theta}
\def\x{\xi}
\def\vt{\vartheta}
\newcommand{\dom}{\operatorname{dom}}
\newcommand{\KdV}{\operatorname{KdV}}
\newcommand{\dd}{\mathrm{d}}
\newcommand{\oc}{\overset{\circ}}
\renewcommand{\Re}{\text{\rm Re}\,}
\renewcommand{\Im}{\text{\rm Im}\,}
\newcommand{\clos}{\text{\rm clos}}
\newcommand{\Res}{\text{\rm Res}\,}
\allowdisplaybreaks \numberwithin{equation}{section}
\newtheorem{theorem}{Theorem}[section]
\newtheorem*{theorem*}{Theorem}
\newtheorem{lemma}[theorem]{Lemma}
\newtheorem{proposition}[theorem]{Proposition}
\newtheorem{corollary}[theorem]{Corollary}
\theoremstyle{definition}
\newtheorem{definition}[theorem]{Definition}
\newtheorem{remark}[theorem]{Remark}
\date{\today}
\author{B. Eichinger\thanks{Supported by the Austrian Science Fund FWF, project no: J 4138-N32.}, T. VandenBoom \thanks{Supported in part by NSF grant DMS-1148609.}, P. Yuditskii\thanks{Supported by the Austrian Science Fund FWF, project no: P 29363-N32.}}
\title{KdV hierarchy via Abelian coverings and operator identities
}
\begin{document}

\maketitle

\begin{abstract}
We establish precise spectral criteria for potential functions $V$ of reflectionless Schr\"odinger operators $L_V = -\partial_x^2 + V$ to admit solutions to the Korteweg de-Vries (KdV) hierarchy with $V$ as an initial value.  More generally, our methods extend the classical study of algebro-geometric solutions for the KdV hierarchy to noncompact Riemann surfaces by defining generalized Abelian integrals and analogues of the Baker-Akhiezer function on infinitely connected domains with a uniformly thick boundary satisfying a fractional moment condition.
\end{abstract}

\section{Introduction}

We study the Cauchy problem for the Korteweg de-Vries (KdV) equation
\begin{align*}
\partial_t V &= \frac{1}{4}\partial_x^3 V - \frac{3}{2}V\partial_x V \\
V(\cdot, 0) &= V(\cdot)
\end{align*}
and the associated hierarchy of higher-order differential equations
\begin{align}
\label{eq:kdvh.1}
\partial_{t_k} V &= \KdV_k(V) \\
\label{eq:kdvh.2}
V(\cdot,0) &= V(\cdot)
\end{align}
for $k \in \bbN$, which we will define precisely below.  For the sake of introduction, we simply note that $\KdV_1(V) = \frac{1}{4}\partial_x^3 V - \frac{3}{2}V\partial_x V$ is the typical KdV equation, and that $\KdV_k$ is generally an order $2k+1$ polynomial differential operator.  We establish conditions on $V$ in terms of the \textit{spectral properties} of the associated Schr\"odinger operator $L_V = -\partial_x^2 + V$ which guarantee global existence and temporal and spatial almost-periodicity of classical solutions:
\begin{theorem}
\label{t:introthm}
Assume that the family of operators $\{L_{V_{x_0}} : V_{x_0}(x) = V(x+ x_0), x_0 \in \bbR\}$ is ergodic.  Denote by $E$ their common spectral set, $E_{ac}$ their almost-sure a.c. spectrum, and by $\rho(\x)/\sqrt{\x}$ the associated integral density of states (i.d.s).  

Suppose that $E$ is homogeneous, $E = E_{ac}$, and $E$ satisfies the following moment condition:
\begin{align*}
\int_{\bbR_+ \setminus E} \dd \x^{k+\frac{3}{2}} < \infty
\end{align*}
and that the i.d.s. satisfies the entropy condition
\begin{align}\label{eq:entropycondn}
-\int_E \log\rho(\x) \frac{\rho(\x)\dd \x}{\sqrt{\x}} < \infty.
\end{align}
Then the Cauchy problem \eqref{eq:kdvh.1}, \eqref{eq:kdvh.2} admits a global solution $V(x, t_k)$ which is uniformly almost-periodic in the time and space coordinates.
\end{theorem}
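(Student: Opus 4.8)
The plan is to extend the classical algebro-geometric (finite-gap) construction of KdV solutions to the infinitely connected domain $\Omega = \bbC \setminus E$, using exactly the function theory that the hypotheses are tailored to provide. First I would pass to the uniformization of $\Omega$ by the unit disk $\bbD$ and introduce the Fuchsian group $\G$ with $\Omega = \bbD/\G$, together with its character group $\G^*$. Homogeneity of $E$ guarantees that $\Omega$ is of Widom type and, crucially, that the \emph{direct Cauchy theorem} (DCT) holds; the entropy condition \eqref{eq:entropycondn} is the quantitative input ensuring that the relevant Green/Blaschke functions and the reproducing kernels of the Hardy spaces of character-automorphic functions converge and are non-degenerate. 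These are the standing structures on which everything below rests.

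Next I would construct the \emph{generalized Abelian integrals} of the second kind. For each flow index I need a function $\Theta_k$, analytic on $\Omega$ and real on the two edges of $E$, with a single prescribed singularity $\sim z^{k+1/2}$ at infinity (the quasimomentum attached to the time $t_k$); the base quasimomentum $\Theta_0(z) \sim \sqrt{z}$ is built from the Green function with pole at infinity and governs the spatial variable $x$, while $\x$ denotes the real variable on $E$. The fractional moment condition $\int_{\bbR_+ \setminus E} \dd \x^{k+\frac{3}{2}} < \infty$ is precisely what makes the sum over the infinitely many gaps defining $\Theta_k$ converge, so that $\dd \Theta_k$ is a well-defined Abelian differential with real periods; these periods furnish the generators of a character $\chi_{x,t} \in \G^*$ depending linearly on $(x, t_1, \dots, t_k)$.

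With these in hand I would define the Baker--Akhiezer function $\psi(x, t, z)$ as the unique (up to normalization at a base point) character-automorphic function on $\Omega$ with character $\chi_{x,t}$ and essential singularity $\exp\!\bigl(x\,\Theta_0(z) + \sum_{k\ge 1} t_k \Theta_k(z)\bigr)$ at infinity. Existence and non-triviality of $\psi$ is exactly where the DCT and the entropy condition enter: they guarantee that the analogue of the theta quotient does not vanish identically and that $\psi$ carries the correct divisor. Reading off the asymptotic expansion of $\psi$ at infinity yields the potential via the standard trace relation $V = -2\,\partial_x u_1$, where $u_1$ is the first subleading coefficient; differentiating $\psi$ then shows that it satisfies $L_V \psi = z\psi$ together with auxiliary evolution equations $\partial_{t_k}\psi = P_k \psi$ for polynomial (in $\partial_x$) operators $P_k$, and the compatibility (zero-curvature) of these linear problems is equivalent to \eqref{eq:kdvh.1}. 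That $V(\cdot, 0)$ reproduces the given initial potential follows from reflectionlessness ($E = E_{ac}$), which identifies $\psi|_{t=0}$ with the Weyl solutions of $L_V$.

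Finally, almost-periodicity is a structural consequence: the map $(x, t) \mapsto \chi_{x,t}$ is a linear flow into the \emph{compact} group $\G^*$, and the map $\G^* \ni \chi \mapsto V_\chi$ is continuous (again by the Widom/entropy function theory), so $V(x,t) = V_{\chi_{x,t}}$ is uniformly almost-periodic jointly in $(x, t_1, \dots, t_k)$. I expect the main obstacle to be precisely the passage from finitely to infinitely many gaps: establishing convergence and the correct real periods of the generalized Abelian integrals $\Theta_k$ under the moment condition, and verifying the DCT so that the Baker--Akhiezer function exists, is single-valued up to its character, and produces a bounded, non-degenerate potential. Once the DCT and the convergence of $\Theta_k$ are secured, the derivation of the hierarchy and of almost-periodicity proceeds in close analogy with the classical compact case.
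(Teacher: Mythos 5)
Your proposal follows essentially the same route as the paper: the theorem is deduced from the general machinery of generalized Abelian integrals $\Theta^{(k)}$ on the Widom domain $\bbC\setminus E$ (with DCT supplied by homogeneity), a Baker--Akhiezer function whose character flows linearly in $(x,t_k)$ on the compact group $\G^*$, and almost-periodicity from continuity of $\a\mapsto V^\a$. The only points where your sketch diverges in detail are that the paper first converts ``ergodic with $E=E_{ac}$'' into the reflectionless property via Kotani theory before invoking the parametrization $V=V^\a$, recovers $V$ from the asymptotic expansion of the $m$-function rather than a trace formula, and uses the $\x^{k+3/2}$ moment not for convergence of $\Theta^{(k)}$ (which needs only $\x^{k+1/2}$) but for the additional $m$-function asymptotics that make the Lax-pair commutator identity, hence the $k$-th KdV equation itself, valid.
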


We present this result first because of its broad appeal.  Recent work \cite{BDGL} of Binder, Damanik, Goldstein, and Lukic has, under stronger assumptions, proven something similar in the case $k = 1$, and Kotani has announced a related result under an integer moment condition \cite{Kot}.  In our case, Theorem \ref{t:introthm} is only a facet of our main theorem, Theorem \ref{t:mainthm}.  In fact, the full content of this paper is the development of a spectrally-dependent Fourier transform, with respect to which the study of reflectionless Schr\"odinger operators becomes greatly simplified.

Naturally, our result is based on Lax pair representation in the theory of integrable systems, see e.g. \cite{DKN, GeHo03}, the spectral theory of ergodic 1-D Schr\"odinger operators, see e.g. \cite{PaFi92}, and the  functional model approach to the spectral theory see e.g. \cite{Nik}.  The relationship between the KdV equation and the Schr\"odinger operator via the Lax pair formalism was noted in the 1960s by Lax \cite{Lax}.   At the same time, use of inverse scattering techniques to solve the KdV equation was pioneered by Gardner, Greene, Kruskal, and Miura \cite{GGKM}.  The 1970s saw significant further development expounding on these ideas to solve the KdV equation for periodic initial data \cite{Dub,  McKTrub, Nov}.  Shortly thereafter, algebro-geometric extensions of the techniques from the periodic setting were developed to address almost-periodic and ergodic initial conditions having finite-gap spectra; see \cite{GeHo03} for a textbook treatment on this approach.  Extensions of these techniques to the infinite-gap setting have been partially developed in important work by Egorova \cite{Egor}, and more recently in work of Binder, Damanik, Goldstein, and Lukic \cite{BDGL}.  Some of our methods, including the ideas of generalized Abelian integrals and conformal mappings onto comb domains, were established by Marchenko in \cite{Marc}.  Other important methods in integrable systems were developed by Deift and Zhao \cite{DeZhao}.

The sequel will be structured as follows: the remainder of Section 1 establishes definitions and notation en route to our main theorem, Theorem \ref{t:mainthm}.  Section 2 recalls some basic preliminaries from spectral theory, and establishes the spectral infemum as a normalization point.  Section 3 describes in detail the functional models and proves an important identity regarding the reproducing kernels.  Finally, Section 4 discusses the generalized Abelian integrals and their relationship to the finite-gap case, and proves the asymptotic expansion of the $m$-functions up to a certain order under our conditions.

\subsection{Generalized Abelian integrals and Hardy spaces \\ on infinitely-connected domains}

Algebro-geometric solutions for the KdV hierarchy equations are given by means of the Baker-Akhiezer function, see e.g. \cite{Kri78,TSH},
and books \cite{DKN,MumTata2}, see also \cite{GeHo03}. This function is associated to a hyperelliptic  Riemann surface
(with the standard compactification)
\begin{align*}
\cS:= \left\{p = (\lambda, w) : w^2 = \l\prod_{j = 1}^N (\lambda - a_j)(\lambda - b_j)\right\}\cup\{\infty\},
\end{align*}
 and a specified point $\infty\in\cS$ on it. The Baker-Akhiezer function contains two factors: the first one is given by an Abelian integral with a certain singularity at $\infty$, and the second one is represented as a specific ratio of theta-functions closely related to the so-called prime form \cite[IIIb, \textsection 1]{MumTata2}.

Our Baker-Akhiezer function is
  \eqref{19jan50}.
It allows for the much more general case when $N=\infty$, although we are restricted by the assumption that   $ a_j,b_j$ are real. In our normalization $0<a_j<b_j$ and $E:=\bbR_+\setminus \cup_{j\ge 1}(a_j,b_j)$ does not contain any isolated points.  So, we consider the domain
 $$
 \cS_+=\bbC\setminus E, \quad 
 $$
 which would play a role of the ``upper sheet"   for a compact hyperelliptic surface $\cS$, $N<\infty$.
A classical construction of Abelian integrals is given by means of potential theory \cite[Part III, Ch. 9]{HC}.  Our first assumption is that our domain is standard in this regard:
 \begin{itemize}
 \item[(R)] Let $G_{\l_0}(\l)=G(\l,\l_0)$ be the Green function in the domain $\cS_+$ with logarithmic singularity at $\l_0\in\cS_+$. We assume that
 $G_{\l_0}$ is continuous up to the boundary,
that is, $\cS_+$ is regular in terms of the potential theory,   \cite[Theorem 6.3., p. 95]{GM}.
\end{itemize}
An interpretation of an Abelian integral of the third kind in terms of the Green function is well known; specifically, the generalized Abelian integral of the third kind $-\log \Phi(\l,\l_0)$ is related to the Green function by
\begin{equation}\label{18jan10}
|\Phi(\l,\l_0)|=e^{-G(\l,\l_0)}, \quad \l_0\in \cS_+.
\end{equation}
Letting $\pi_1(\cS_+)$ be the fundamental group of the given domain, analytic continuation of the function $\Phi_{\l_0}(\l)=\Phi(\l,\l_0)$ along the path $\gamma \in \pi_1(\cS_+)$ picks up a unimodular mutliplicative factor
\begin{equation}\label{18jan11}
\Phi(\g(\l),\l_0)=e^{2\pi i \nu_{\l_0}(\g)}\Phi(\l,\l_0), \quad \g\in\pi_1(\cS_+),\quad \nu_{\l_0}(\g)\in\bbR/\bbZ.
\end{equation}
In this case we say $\Phi_{\l_0}(\l)$ is (multiplicative) character automorphic with character $\nu_{\l_0}$.

The counterpart of the theta-function is hardly possible in our level of generality, but the ratio of two of them can have perfect sense; we suggest to treat it as a \textit{special function} associated to the problem.  We interpret the prime form (the second factor in our generalized Baker-Akhiezer function) as the reproducing kernel of a suitable Hilbert space of analytic functions.  By ``suitable Hilbert spaces" we mean Hardy spaces $H^2_{\cS_+}(\a)$ associated to an arbitrary unitary character $\a$ of the group $\pi_1(\cS_+)$:
\begin{definition}\label{d18jan1}
Let $e^{2\pi i\a(\g)}$ be the system of multipliers associated  to paths $\g\in\pi_1(\cS_+)$. We say that $f(\l)$ belongs to the space  
$H^2_{\cS_+}(\a)$ if
\begin{itemize}
\item
it is a locally analytic multivalued function in $\cS_+$;
\item $|f(\l)|^2$ is single-valued in $\cS_+$ and  possesses a harmonic majorant, i.e. there exists a function $u(\l)$ harmonic in $\cS_+$  such that
$$
|f(\l)|^2\le u(\l);
$$
\item its analytic continuation  $f(\g(\l))$ along $\g\in\pi_1(\cS_+)$ is related to the original value $f(\l)$ by 
$$
f(\g(\l))=e^{2\pi i\a(\g)}f(\l).
$$
\end{itemize}
  
We define
\begin{equation}\label{18jan0}
\|f\|^2_{H^2_{\cS_+}(\a)}=\inf\{u(-1): \Delta u=0,\quad |f(\l)|^2\le u(\l),\ \l\in\cS_+\}.
\end{equation}
\end{definition}
This space possesses a reproducing kernel, which we denote by $k^\a_{\cS_+}(\l,\l_0)=k^\a_{\cS_+,\l_0}(\l)$. That is,
\begin{equation}\label{18jan1}
\langle f, k^\a_{\cS_+,\l_0} \rangle= f(\l_0),\quad\forall f\in H^2_{\cS_+}(\a).
\end{equation}
Since the elements of $H^2_{\cS_+}(\a)$ are multivalued functions, \eqref{18jan1} creates a state of small uncertainty. To avoid this uncertainty, we provide an alternative definition using the universal covering for $\cS_+$.

Recall that for the given domain there exists a function $\bl(z)$  analytic  in the upper half-plane $\bbC_+$ and a Fuchsian group $\G \simeq \pi_1(\cS_+)$ such that $\bl(z)$ induces a one to one correspondence between the points $\l\in \cS_+$ and orbits $\{\g(z)\}_{\g\in \G}$, $z\in \bbC_+$. We will normalize this function by the conditions
$$
\bl(iy)\in\bbR_-,\ y>0,\quad \bl(iy)\to -\infty\ \text{as}\ y\to\infty.
$$
Due to this normalization, $\bl(z)$ is defined up to a positive multiplier.  By $\G^*$ we denote the group of unitary characters of the group $\G$.

The following definition is basically parallel to Definition \ref{d18jan1}.
\begin{definition}\label{d18jan2} Let $\a\in\G^*$.
The space $H_{\G}^2(\a)$ is the subspace of the standard $H^2$ in $\bbC_+$ (with respect to the harmonic measure $\frac 1 \pi\frac{dx}{1+x^2}$) consisting of character-automorphic functions, i.e.,
$$
H_{\G}^2(\a)=\{ f\in H^2:\ f(\gamma(z))=e^{2\pi i\a(\g)} f(z)\}.
$$
\end{definition}
The spaces $H^2_{\cS_+}(\a)$ and $H^2_\G(\a)$ correspond by way of the uniformization $\bl(z)$:
\begin{proposition}\label{pr:h2corresp}
$f(\l)$ belongs to $H^2_{\cS_+}(\a)$ if and only if there exists $g\in H_{\G}^2(\a)$ such that $f(\bl(z))=g(z)$.
\end{proposition}
We denote the reproducing kernel in $H_{\G}^2(\a)$ by $k_{\G}^\a(z,z_0)$ and remark that the correspondence in Proposition \ref{pr:h2corresp} completely removes the above-mentioned ambiguity in \eqref{18jan1}.  For a textbook discussion of character automorphic Hardy spaces, see \cite{Has83}.

Our second assumption on $\cS_+$ is the Widom condition, which is necessary and sufficient  for non-triviality of the spaces $H_{\G}^2(\a)$ for all $\a\in\G^*$.

\begin{itemize}
\item[(PW)] We assume that for some (and hence all) $\l_0 \in \cS_+$, we have
\begin{equation}\label{18jan4}
\sum_{\nabla G_{\l_0}(\x)=0} G(\x,\l_0)<\infty.
\end{equation}
\end{itemize}
As soon as \eqref{18jan4} holds, $\cS_+$ is called a domain of Widom type, and $\G$ is respectively called a group of Widom type. 
For $\l_0\in\bbR_-$ all critical values $\x=\x(\l_0)$ are real; moreover, there is exactly one critical point in each gap, $\x_j(\l_0)\in(a_j,b_j)$.

The Widom condition (PW) along with our third assumption allows us to extend the notion of Abelian integrals of the second kind:
\begin{itemize}
\item[($k$-GLC)] By the order $k$ gap length condition we mean
\begin{equation}\label{21nov0}
\int_{\bbR_+\setminus E}\dd \xi^{k+\frac 1 2} =\sum_{j\ge 1}(b_j^{k+\frac 1 2}-a_j^{k+\frac 1 2})<\infty.
\end{equation}
\end{itemize}
Under the assumptions (PW) and ($k$-GLC), we can define the following (see Proposition \ref{pr:genabint} below):
\begin{definition} By the (normalized) order $k$ generalized Abelian integral we mean the multivalued in $\cS_+$ analytic function $\Theta^{(k)}(\l)$, whose
(single-valued) imaginary part is given by 
\begin{equation}\label{18jan5}
\Im \Theta^{(k)}(\l)=M^{(k)}(\l):=\Im \l^{k+\frac 1 2}+\frac 1 \pi\int_{\bbR_+\setminus E} G(\xi,\l) \dd\xi^{k+\frac 1 2}.
\end{equation}
\end{definition}

As before, the analytic continuation  $\Theta^{(k)}(\g(\l))$ along the path $\g\in\pi_1(\cS_+)$ is related to the original value $\Theta^{(k)}(\l)$ by
\begin{equation}\label{18jan6}
\Theta^{(k)}(\g(\l))=\Theta^{(k)}(\l)+2\pi \eta^{(k)}(\g).
\end{equation}
We say that $\eta^{(k)}$ is an additive character on $\pi_1(\cS_+)$, and that the function $\Theta^{(k)}(\l)$ is (additive) character automorphic.
\begin{lemma} The generalized Abelian differentials $\dd\Theta^{(k)}(\l)$ are of the form
\begin{equation}\label{}
\dd \Theta^{(k)}(\l)=\pi^{(k)}(\l)\prod_{j\ge 1}\frac{1-c_j^{(k)}/\l}{\sqrt{(1-a_j/\l)(1-b_j/\l)}}\frac {\dd\l}{\sqrt{\l}}
\end{equation}
where $\pi^{(k)}(\l)$ is a certain monic polynomial of degree $k$ and $c^{(k)}_j\in (a_j,b_j)$.
\end{lemma}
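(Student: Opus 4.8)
The plan is to read off the analytic structure of $d\Theta^{(k)}$ from its single-valued imaginary part and then factor out the universal branching over the band edges. Granting the existence of $\Theta^{(k)}$ (Proposition \ref{pr:genabint}), I would first note that, by \eqref{18jan6}, analytic continuation of $\Theta^{(k)}$ changes it only by the additive constant $2\pi\eta^{(k)}(\g)$; hence $d\Theta^{(k)}$ is a genuinely single-valued analytic differential on $\cS_+$. Its only singularity is at $\infty$: from \eqref{18jan5} the difference $M^{(k)}(\l)-\Im\l^{k+\frac12}=\frac1\pi\int_{\bbR_+\setminus E}G(\x,\l)\,\dd\x^{k+\frac12}$ is, by ($k$-GLC) and $G(\x,\l)\to0$ as $\l\to\infty$, a bounded harmonic function near $\infty$, so $d\Theta^{(k)}-\dd(\l^{k+\frac12})$ is regular there and $d\Theta^{(k)}=(k+\tfrac12)\l^{k-\frac12}(1+o(1))\,\dd\l$. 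Since $E=\overline E$, I would normalize $\Theta^{(k)}$ to be of real type, $\overline{\Theta^{(k)}(\bar\l)}=\Theta^{(k)}(\l)$; then $\dd\Theta^{(k)}/\dd\l$ is real on each gap $(a_j,b_j)$ and on $(-\infty,0)$, and Schwarz reflection across the bands exhibits it as a second-kind differential on the (formal) hyperelliptic curve branched over $\{0\}\cup\{a_j,b_j\}_j$, i.e. it has inverse-square-root boundary singularities precisely at the band edges and no others.

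The heart of the matter is locating the zeros. Here I would use the comb/period picture: on the slit $(a_j,b_j)$ the real function $\dd\Theta^{(k)}/\dd\l$ blows up like $(\l-a_j)^{-1/2}$ and $(b_j-\l)^{-1/2}$ at the two edges, with opposite signs because $M^{(k)}$ vanishes at both edges $a_j,b_j\in E$ while the slit has positive height $2\pi\eta^{(k)}(\g_j)$; this forces a sign change, hence at least one zero $c_j^{(k)}\in(a_j,b_j)$, and the monotonicity of the harmonic conjugate along the slit (equivalently, that the comb map produces a single vertical slit per gap) forces exactly one. Reality of $\Theta^{(k)}$ confines all remaining zeros to occur in conjugate pairs or on $(-\infty,0)$; these will be absorbed into the polynomial factor.

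With the $c_j^{(k)}$ defined as these gap zeros, I would form the quotient of $d\Theta^{(k)}$ by the candidate product and show it is entire. The product $\prod_{j}\frac{1-c_j^{(k)}/\l}{\sqrt{(1-a_j/\l)(1-b_j/\l)}}$ is the convergent arrangement of the branch factors: each factor is $1+\frac1\l(\frac{a_j+b_j}2-c_j^{(k)})+O(\l^{-2})$, so since $c_j^{(k)}\in(a_j,b_j)$ the logarithms are dominated by $\sum_j(b_j-a_j)$, which is finite because ($k$-GLC) gives $\sum_j a_j^{-(k-\frac12)}(b_j^{k+\frac12}-a_j^{k+\frac12})<\infty$ with $a_j\to\infty$. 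Thus the product converges locally uniformly in $\cS_+$, carries exactly the multiplicative branch character of $\sqrt\l\prod_j\sqrt{(1-a_j/\l)(1-b_j/\l)}$, and has a simple zero in each gap. Dividing $\sqrt\l\,\dd\Theta^{(k)}/\dd\l$ by it therefore cancels all band-edge square-root singularities (the Widom condition (PW) guaranteeing that the accumulated characters match and that the critical points are summable), removes the gap zeros, and yields a single-valued function analytic across every band, real on $\bbR$, and bounded at each finite point.

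Finally, this quotient $\pi^{(k)}(\l)$ is entire with at worst polynomial growth: by the asymptotics established above it behaves like $\l^{k}$ at $\infty$, so by Liouville it is a polynomial of degree $k$, monic after matching the leading coefficient to the $\Im\l^{k+\frac12}$ normalization; its $k$ zeros are exactly the residual zeros of $d\Theta^{(k)}$ not located in the gaps. I expect the main obstacle to be the passage to $N=\infty$: proving that the infinite product is a bona fide (convergent, character-automorphic) function and that the quotient has no spurious singularities on the bands. This is exactly where (PW) and ($k$-GLC) are indispensable, and I would secure it by truncating $E$ to its first $N$ gaps, invoking the classical finite-gap identity $d\Theta^{(k)}_N=P_N(\l)\,\dd\l/w_N(\l)$, and passing to the limit using monotone convergence of the Green functions together with the uniform bounds furnished by the two hypotheses.
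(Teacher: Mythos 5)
The paper states this lemma without proof: it is offered as a structural fact, with the case $k=0$ recorded separately as \eqref{eq:thetaPrime} and the underlying technique (Schwarz--Christoffel localization of the critical points of a comb mapping, finite-gap truncation, and passage to the limit via $M_{k,N}\to M_k$) appearing only later, in Section 4.1 and in the proof of the lemma preceding Corollary \ref{c:qwtowid}. So there is no official proof to compare against line by line; measured against the paper's toolkit, your outline is the right one, and the finite-gap approximation you relegate to a fallback at the end is in fact the argument the authors themselves use for every analogous statement. I would make it primary: truncate to $E_N$, where $\dd\Theta^{(k)}_N=P_N(\l)\,\dd\l/\sqrt{s_N(\l)}$ with $\deg P_N=N+k$ is classical, locate at least one zero per gap from the vanishing of the gap periods, and pass to the limit using the locally uniform convergence of $M_{k,N}$ and a normal-family argument for the products.

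Two steps in your direct argument need repair. First, ``the correction to $\Im\l^{k+\frac12}$ is bounded harmonic near $\infty$, hence $\dd\Theta^{(k)}-\dd(\l^{k+\frac12})$ is regular there'' is too quick: $\infty$ is a boundary point of $\cS_+$ (an accumulation point of $E$), not an isolated singularity, so a bound on a harmonic function does not control its derivative; you need the quantitative decay $\frac1\pi\int_{\bbR_+\setminus E}G(\xi,\l)\,\dd\xi^{k+\frac12}\le\frac{G(\l,-1)}{\pi}\int\frac{M(\xi)}{M(-1)}\,\dd\xi^{k+\frac12}$ as in Corollary \ref{p30jan4}, followed by Cauchy estimates on disks of radius comparable to $|\l|$ centred on $\bbR_-$. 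The same issue --- uniform control of the quotient near band edges and near accumulation points of small gaps --- is hidden in your assertion that the quotient ``yields a single-valued function analytic across every band''; this is exactly the hard part in the infinite-gap setting and is what the truncation argument is for. Second, your claim that each gap contains \emph{exactly} one zero of $\dd\Theta^{(k)}$ is neither needed nor justified for $k\ge1$: the degree count gives $N+k$ zeros in the $N$-gap case, and nothing prevents some of the $k$ ``extra'' zeros from landing in gaps, so a gap may carry several critical points of $M^{(k)}$. The lemma only requires designating one zero $c_j^{(k)}$ per gap; any surplus real zeros are simply absorbed into $\pi^{(k)}$. Finally, note the normalization: with $\Theta^{(k)}\sim\l^{k+\frac12}$ the leading coefficient of $\sqrt{\l}\,\dd\Theta^{(k)}/\dd\l$ is $k+\frac12$ (compare the factor $-\frac12$ in \eqref{eq:thetaPrime}), so ``monic'' is to be read modulo this overall constant, as you implicitly acknowledge.
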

In the special case $k=0$ we will drop the index. $M(\l)$ is called the \textit{Martin function}  of the given domain (with respect to infinity) or 
\textit{Phragm\'en-Lindel\"of function} (particularly, with respect to the representation \eqref{18jan5}, see \cite[Theorem, p. 407]{KoosisLogInt1}).  

\subsection{Functional models on the universal Abelian cover in application to spectral theory of 1-D Schr\"odinger operators}

We can now trace the path towards our Main Theorem. The universal Abelian covering of $\cS_+$ is defined as follows: denote by $\G'$ the commutator subgroup of $\G$
\begin{equation}\label{19jan1}
\G'=\{\oc\g\in\G:\ e^{2\pi i\a(\oc\g)}=1, \ \forall \a\in\G^*\}.
\end{equation}
The quotient $\G/\G'$ is canonically dual to $\G^*$ by Pontryagin duality.  Thus, any of the multi-valued analytic continuations defined above become functions on the surface $\cR:=\bbC_+/\G'$. In particular, we have the covering map $\l_{\cR}:\cR\to\cS_+$
\begin{equation}\label{19jan2}
 \l_{\cR}(p)=\bl(z),\ \text{where}\ p=\{\oc\g(z)\}_{\oc\g\in\G'}\in\cR.
\end{equation}
Similarly, we consider generalized Abelian integrals as functions on  $\cR$ and denote them by $\t_{\cR}^{(k)}(p)=\Theta^{(k)}(\l_{\cR}(p))$. As one would expect, the group $\G/\G'$ acts on this surface by
$$
\un \g p=\{\oc\g(\g(z))\}_{\oc\g\in\G'},\quad p=\{\oc\g(z)\}_{\oc\g\in\G'},
$$
where $\un \g\in \G/\G'$ denotes the equivalence class in $\G/\G'$ of an element $\g\in\G$. According to this notation \eqref{18jan6} has the form
\begin{equation}\label{19jan3}
\t_{\cR}^{(k)}(\un\g p)=\t_{\cR}^{(k)}(p)+2\pi\eta^{(k)}(\g).
\end{equation}

The main theorem claims that the KdV hierarchy equation of order $k$ is simply the relation that the two \textit{multiplication operators} for the functions $\l_{\cR}(p)$ and $\t^{(k)}_{\cR}(p)$ \textit{commute} as actions on a Hardy space $H^2_{\cR}$ associated to the Abelian cover $\cR$; we define this space below.

We require such notions as inner and outer functions, functions of bounded characteristic (of class $N$) and of its Smirnov subclass (or Nevanlinna class $\cN_+$) on the Riemann surfaces. These objects are well known in the theory of Hardy spaces in the disc $\bbD$ or half-plane $\bbC_+$ \cite{Gar07}, particularly the class $\cN_+$ is defined in Ch. II, Sect. 5. We say that the function is outer (inner) on $\cS_+$ or $\cR$ if its lift to the universal cover $\bbC_+$ is outer (inner). We say that $F(p)$, $p\in\cR$, is of bounded characteristic if it can be  represented as a ratio two bounded analytic functions. It is of Smirnov class if in addition the denominator is an outer function.

\begin{definition}
\label{d:h2r}
	The space $H^2_{\cR}$ is formed by Smirnov class functions $F$ on $\cR$  with square-integrable boundary values
	\begin{equation}\label{19jan8}
 \|F\|^2= \frac 1 {2\pi}\int_{\pd\cR}|F(p)|^2\dd\theta_\cR(p)<\infty.
\end{equation}
\end{definition}
The integral density of states is a fundamental measure in the spectral theory of ergodic operators. In our case it coincides with $\dd\Theta$ restricted on $E = \pd\cS_+$.
Above we paid  tribute to its importance in our definition of $H_{\cR}^2$ as a subspace of $L^2$-space with respect to $\dd\t_{\cR}$ on the boundary $\pd\cR$. 

The action of the group $\G/\G'$  on $\pd\cR$ is dissipative. This means that there exists a fundamental measurable set $\bbE\subset \pd\cR$ such that $\un\g \bbE\cap\bbE=\emptyset$ for $\un\g\not=1_{\G/\G'}$, and, for an arbitrary $L^1$-function $F(p)$,
$$
\sum_{\un \g\in\G/\G'}\int_{\un\g \bbE} F(p)\dd\t_{\cR}=\int_{\pd\cR} F(p)\dd\t_{\cR}.
$$
Note that essentially $\bbE=\pd\cS_+$.  We  define the Hardy spaces $H^2(\a)$ of character automorphic functions (with respect to the action of the group $\G/\G'$ on $\cR$) in the following way.
\begin{definition}
\label{d:h2a}
	Let $\a\in\G^*$. The space $H^2(\a)$ consists of those analytic functions $f$ on $\cR$ which satisfy
	\begin{itemize}
		\item[(i)] $f$ is of Smirnov class,
		\item[(ii)] $f(\un\g p)=e^{2\pi i \a(\g)}f(p)$ for all $\un\g\in\G/\G'$,
		\item[(iii)] $\|f\|^2={\frac{1}{2\pi}\displaystyle \int_{\bbE}|f(p)|^2\dd \t_{\cR}(p)<\infty}$.
	\end{itemize}
\end{definition}
The space $H^2(\a)$ is a small modification of the spaces $H_{\cS_+}^2(\a) \cong H^2_\G(\a)$ defined above -- for a precise relationship between these spaces, see Lemma  \ref{l19jan2}.  The advantage of considering these spaces in this new way is that we can collect all character automorphic Hardy spaces $H^2(\a)$ within $H^2_\cR$ in a sense we now describe.

Consider the collection of functions $f(p,\a)$ such that
$f(p,\a)\in H^2(\a)$ for a.e. $\a\in\G^*$ with respect to the Haar measure $\dd\a$ on $\G^*$ and
$$
\| f\|^2:=\int_{\G^*}\|f(p,\a)\|_{H^2(\a)}^2 \dd\a<\infty.
$$
We denote this space by $\cH=\int_{\G^*} H^2(\a) \dd\a$.


The proof of the main theorem concerns two natural Fourier transforms which unify the perspectives of the global functional model on $H^2_\cR$ and the individual functional models on the character automorphic Hardy spaces $H^2(\a)$.  The first such transform maps from $H^2_{\cR}$  to $\cH$ via 
 \begin{equation}\label{19jan4}
f(p,\a)=(\cF_1 F)(p,\a)=\sum_{\un\g\in\G/\G'} F(\un\g p)e^{-2\pi i\a(\g)},\quad F\in H^2_{\cR}.
\end{equation}
\begin{lemma}
The transform \eqref{19jan4} acts unitary from $H^2_{\cR}$ to $\cH$.
\end{lemma}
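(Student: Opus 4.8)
The plan is to recognize $\cF_1$ as the Plancherel transform for the dissipative action of the countable discrete abelian group $\G/\G'$ on the boundary measure space $(\pd\cR,\dd\t_\cR)$, whose Pontryagin dual is the compact group $\G^*$ with its normalized Haar measure $\dd\a$. Three points then need to be settled: that $\cF_1 F$ really lies in $\cH$ (character automorphy, Smirnov class, and measurability in $\a$), that $\cF_1$ preserves norms, and that $\cF_1$ is onto. The entire norm identity rests on orthogonality of characters together with the fundamental-domain decomposition of $\dd\t_\cR$ furnished by the set $\bbE$.

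For the isometry I would first record the elementary relation $\int_{\G^*}e^{2\pi i\a(\g)}\,\dd\a=\delta_{\un\g,\,1_{\G/\G'}}$, legitimate because $e^{2\pi i\a(\g)}$ depends only on the class $\un\g\in\G/\G'$. Expanding $|f(p,\a)|^2$ as a double sum and integrating termwise over $\G^*$ collapses it, for each fixed $p$, to
\[
\int_{\G^*}|f(p,\a)|^2\,\dd\a=\sum_{\un\g,\un\delta}F(\un\g p)\overline{F(\un\delta p)}\int_{\G^*}e^{-2\pi i(\a(\g)-\a(\delta))}\,\dd\a=\sum_{\un\g\in\G/\G'}|F(\un\g p)|^2.
\]
Integrating over $\bbE$, applying Fubini, and then using the defining dissipativity identity for $\bbE$ together with the $\G/\G'$-invariance of $\dd\t_\cR$, one obtains
\[
\|\cF_1 F\|_\cH^2=\frac{1}{2\pi}\int_{\bbE}\sum_{\un\g}|F(\un\g p)|^2\,\dd\t_\cR(p)=\frac{1}{2\pi}\sum_{\un\g}\int_{\un\g\bbE}|F|^2\,\dd\t_\cR=\frac{1}{2\pi}\int_{\pd\cR}|F|^2\,\dd\t_\cR=\|F\|^2_{H^2_\cR},
\]
which simultaneously shows $\cF_1$ is well defined and isometric.

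To see that $\cF_1 F\in\cH$, the character automorphy $f(\un\delta p,\a)=e^{2\pi i\a(\delta)}f(p,\a)$ follows from reindexing the sum by $\un\g\mapsto\un\g\un\delta$, while membership of each $f(\cdot,\a)$ in the Smirnov class is obtained by noting that the finite partial sums $\sum_{\un\g\in S}F(\un\g\cdot)e^{-2\pi i\a(\g)}$ lie in $H^2(\a)$ and, by the Bessel bound extracted from the first displayed identity, converge in the norm of the Hilbert space $H^2(\a)$ for a.e.\ $\a$; since $H^2(\a)$ is closed, the limit remains of Smirnov class. For surjectivity I would exhibit the adjoint $\cF_1^{*}f=\int_{\G^*}f(\cdot,\a)\,\dd\a$: averaging over all characters annihilates the automorphy factor for every $\un\g\neq 1_{\G/\G'}$, so $\cF_1^{*}f$ is genuinely single-valued on $\cR$, and the same orthogonality computation yields $\cF_1^{*}\cF_1=I$ on $H^2_\cR$, while $\cF_1\cF_1^{*}=I$ on $\cH$ is precisely Fourier inversion on the compact group $\G^*$. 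Together these identities upgrade the isometry to a unitary.

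The measure-theoretic Plancherel skeleton above is routine; the genuine obstacle is analytic. One must confirm that the transform respects the bounded-characteristic/Smirnov structure on the infinitely connected Widom surface $\cR$ -- in particular that $\cF_1^{*}f$ is of Smirnov class rather than merely square integrable, so that it truly belongs to $H^2_\cR$ and not just to the ambient $L^2(\pd\cR)$. This requires justifying the a.e.\ convergence of the defining series, the interchange of $\sum_{\un\g}$ with $\int_{\G^*}$, and reading the Fourier-inversion step $\cF_1\cF_1^{*}=I$ in the $L^2(\G^*)$ sense rather than pointwise; the closedness of the Smirnov class under the relevant $L^2$-limits is the key technical fact that keeps all intermediate objects in the correct analytic category.
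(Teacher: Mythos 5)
Your proposal is correct in outline, but note that the paper does not actually prove this lemma in-house: it is stated in the introduction and then justified by citing \cite[Theorem 2.a]{Yud97}, restated as the theorem at the start of the subsection on $H^2_\cR$ as a shift-invariant subspace. What you have written is essentially a reconstruction of the standard Plancherel argument underlying that citation: orthogonality of characters on $\G^*$ collapses the double sum to $\sum_{\un\g}|F(\un\g p)|^2$, the dissipativity of the $\G/\G'$-action on $(\pd\cR,\dd\t_\cR)$ with fundamental set $\bbE$ converts the $\bbE$-integral of that sum into the full boundary integral, and the adjoint $\cF_1^*f=\int_{\G^*}f(\cdot,\a)\,\dd\a$ together with Fourier inversion on the compact dual gives surjectivity. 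You have also correctly located the one point where the argument is not pure measure theory: one must know that $H^2(\a)$ (Smirnov class plus square-integrable boundary values) is closed under the relevant $L^2(\dd\t_\cR)$ limits, and likewise that $\cF_1^*f$ lands in the Smirnov class rather than merely in $L^2(\pd\cR)$. On a Widom surface with the paper's standing hypotheses this is a genuine theorem (it is part of what \cite{Yud97} and \cite{Has83} establish), so it is appropriate that you flag it as the key technical input rather than treating it as automatic; a fully self-contained proof would have to supply it, e.g.\ via the Smirnov maximum principle on the universal cover as in the proof of Lemma \ref{l19jan2}.
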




For the second Fourier transform, we represent each individual $H^2(\a)$ as an exhaustion by invariant subspaces $e^{i\t_{\cR}x}H^2(\a-x\eta)$ in analogue to the classical Paley-Wiener theorem. This generates the Fourier integral
 \begin{equation}\label{19jan5}
f(p,\a)=\int_0^\infty e^{i\t_{\cR}(p)x}e_{\cR}(p,\a-x\eta) \hat f(x,\a) \dd x,
\end{equation}
where $e_{\cR}(p,\a)$ is our special  function (or generalized eigenfunction), i.e., the reproducing kernel related component of the Baker-Akhiezer function
\begin{equation}\label{19jan50}
\Psi(x,t_k;p,\a_0)=e^{i \t_{\cR}(p)x+i \t_{\cR}^{(k)}(p)t_k}{e_{\cR}(p,\a_0-x\eta-t_k\eta^{(k)})}.
\end{equation}
For the precise statement of the main theorem -- including an explicit formula for $e_{\cR}(p,\a)$ -- we need some additional notation and definitions. 

We wish to distinguish the relationship between the functional model for Jacobi operators with that developed in the main theorem below.  In fact, the Fourier transforms above can be considered as a continuous version of the discrete Fourier transform from \cite{SoYud97}, in which the Green function has to be substituted by the Martin function.
Moreover, in this paper we get it  as a limit case, see subsection \ref{ss31}.  We will also substitute the  condition (PW) related to the Green function (see  \eqref{18jan4}) by a similar condition related to the Martin function:
\begin{itemize}
\item[(PW$_M$)] We assume that
\begin{align*}
\mathfrak{w}_M = \sum_{\nabla M(c) = 0} M(c) < \infty.
\end{align*}
\end{itemize}
The condition (PW$_M$) is equivalent to the entropy condition \eqref{eq:entropycondn} and implies the (PW) condition \eqref{18jan4}; see \eqref{eq:qwtoentrpy} and  Corollary \ref{c:qwtowid}.   We will see in subsection \ref{ss43} that it plays an important role in the asymptotics of our special functions $e_{\cR}(p,\a)$.

The functions $e_{\cR}(p,\a)$ are defined by means of  canonical products, see \eqref{19ja1}.
First,  we define the set of divisors
\begin{align*}
	\cD(E):=\{\{\l_j,\e_j\}_{j=1}^\infty: \l_j\in[a_j,b_j],\e_j=\pm 1\}.
\end{align*}
with the identification $(a_j,-1)=(a_j,+1)$ and $(b_j,-1)=(b_j,+1)$, endowed with the product topology of circles. 

Recall that the Blaschke factor in $\bbC_+$ is of the form
$$
b_{z_0}(z)=\frac{z-z_0}{z-\bar z_0}, \quad z_0\in\bbC_+.
$$
The function $\Phi(\bl(z),\l_0)$ is represented by the Blaschke product along the orbit $\{\g(z_0)\}$, $\bl(z_0)=\l_0$, i.e.,
\begin{equation}\label{19jn1}
\Phi(\bl(z),\l_0)=\prod_{\g\in\G}\frac{|b_{\g(z_0)}(i)|}{b_{\g(z_0)}(i)}b_{\g(z_0)}(z).
\end{equation}

To $D\in\cD(E)$ we associate
\begin{equation}\label{19ja1}
e(\l,D)=\sqrt{\prod_{j\ge 1}\frac{(1-\l_j/\l)\Phi(\l,c_j)}{(1- c_j/\l)\Phi(\l,\l_j)}}
\prod_{j\ge 1}\Phi(\l,\l_j)^{\frac{1+\e_j}{2}}.
\end{equation}
Convergence of Blaschke products in \eqref{19ja1} for all $D\in \cD(E)$ corresponds exactly to the Widom condition \eqref{18jan4}, and convergence of the whole product is guaranteed additionally by the finite gap length condition \eqref{21nov0} (for $k=1/2$). Therefore
$e(\l,D)$ is defined as a multi-valued character-automorphic function in $\cS_+$. Its character is denoted by $\a(D)$. 
This map  from divisors $\cD(E)$ to $\G^*$ was introduced in \cite{SoYud94} and called the \textit{generalized Abel map}; we define it precisely below in Definition \ref{d:abelmap}. It is always surjective.  If the generalized Abel map is injective, one can define
\begin{equation}\label{19an1}
e_{\cR}(p,\a(D))=e(\l_{\cR}(p),D).
\end{equation}
Our last assumption on the domain $\cS_+$ (or the set $E$) ensures injectivity of the generalized Abel map:
\begin{itemize}
\item[(DCT)] The DCT property holds in $\cS_+$ \cite[VII.2]{Has83}.
\end{itemize}
This property of Widom domains is not yet completely understood. One of several equivalent conditions to the DCT property holding is:
\begin{itemize}
\item[(DCT)'] $k^\a(p_0,p_0)$ is continuous function in $\a$ for $p_0\in\cR$.
\end{itemize}
An equivalent property is given in Remark \ref{rem30jan1} and  it is  described as the \textit{Direct Cauchy Theorem} (DCT), which holds in the domain, in subsection \ref{ss21}.

\begin{proposition}
 If (DCT) holds, then  the generalized Abel map is a homeomorphism between $\cD(E)$ and $\G^*$. In turn,  \eqref{19an1} defines $e_{\cR}(p,\a)$ on $\G^*$, and it is continuous in this variable for each fixed $p\in \cR$.
\end{proposition}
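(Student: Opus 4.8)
The plan is to prove the homeomorphism by verifying that the generalized Abel map is a continuous bijection between two compact Hausdorff spaces, and then to read off the statements about $e_\cR$ as formal consequences. The topological backbone is that $\cD(E)$, being a countable product of circles with the product topology, is compact by Tychonoff's theorem, while $\G^*$ is the Pontryagin dual of the countable discrete abelian group $\G/\G'$ and is therefore a compact Hausdorff abelian group. Consequently any continuous bijection $\cD(E)\to\G^*$ is automatically a homeomorphism, so the entire task reduces to establishing continuity, surjectivity, and injectivity of the map $D\mapsto\a(D)$.

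For continuity, I would read the character $\a(D)$ off the multiplicative behaviour \eqref{18jan11} of the factors appearing in \eqref{19ja1}. Since characters add under multiplication of functions, $\a(D)(\g)$ splits as a sum of contributions of the individual gaps, and the contribution of the $j$-th gap depends continuously on $(\l_j,\e_j)$ as it ranges over its circle; the endpoint identifications $(a_j,-1)=(a_j,+1)$ and $(b_j,-1)=(b_j,+1)$ are precisely those making the pair $(\nu_{\l_j},\e_j)$ close up to a loop. The Widom condition (PW) guarantees convergence of the Blaschke products in \eqref{19ja1}, and the order $\tfrac12$ gap length condition the convergence of the whole product, with enough uniformity that the sum defines a continuous map into $\G^*$. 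Surjectivity holds for every Widom domain and is already recorded above (cf. \cite{SoYud94}), so only injectivity remains.

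Injectivity is the heart of the matter and the single place where the hypothesis (DCT) enters. Suppose $\a(D_1)=\a(D_2)$. Then $e(\cdot,D_1)$ and $e(\cdot,D_2)$ carry the same character, so their ratio $h=e(\cdot,D_1)/e(\cdot,D_2)$ is single-valued (character-trivial) and of bounded characteristic on $\cS_+$, with divisor $D_1-D_2$ supported over the gaps and with modulus governed by the Green function through \eqref{18jan10}. Here I would invoke the Direct Cauchy Theorem: under (DCT), equivalently under the continuity of $\a\mapsto k^\a(p_0,p_0)$ in (DCT)$'$, the inverse spectral theory of Sodin--Yuditskii and Hasumi \cite[VII.2]{Has83} forces any such character-trivial function of bounded characteristic whose divisor lives over the gaps to be constant, whence $D_1=D_2$. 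Equivalently, $e(\cdot,D)$ is, up to normalization, the reproducing-kernel/extremal element of $H^2(\a(D))$ whose zero divisor is $D$, and (DCT) is exactly the condition ensuring that this divisor is uniquely recoverable from the character. This step is the main obstacle, since it rests on the full DCT machinery rather than on an elementary estimate.

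Once the Abel map is known to be a homeomorphism, its inverse $\a\mapsto D(\a)$ is continuous, so \eqref{19an1} unambiguously defines $e_\cR(p,\a):=e(\l_\cR(p),D(\a))$ for every $\a\in\G^*$. Because $\G'$ is by \eqref{19jan1} the common kernel of all characters, each $\a\in\G^*$ is trivial on $\G'$, so $e(\cdot,D)$ is $\G'$-invariant and hence genuinely single-valued on $\cR=\bbC_+/\G'$, which removes the ambiguity noted after \eqref{18jan1}. Finally, continuity of $e_\cR(p,\a)$ in $\a$ for fixed $p$ follows by composition: $D\mapsto e(\l_\cR(p),D)$ is continuous for fixed $p$ by the product representation \eqref{19ja1}, whose factors depend continuously on the divisor points and converge locally uniformly in $\l$, and $\a\mapsto D(\a)$ is continuous by the homeomorphism just established.
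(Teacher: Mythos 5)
The paper itself does not prove this proposition: the homeomorphism property of the Abel map is imported from Sodin--Yuditskii (see Definition \ref{d:abelmap}, where \cite{SoYud95} is cited), so your attempt has to be judged on its own merits. Your topological frame is correct and is the standard one: $\cD(E)$ is compact by Tychonoff, $\G^*$ is the compact dual of the countable discrete group $\G/\G'$, and a continuous bijection between compact Hausdorff spaces is a homeomorphism, so everything reduces to continuity, surjectivity and injectivity. The continuity and surjectivity parts are essentially right (and standard for Widom domains), and the final deduction of continuity of $\a\mapsto e_\cR(p,\a)$ by composing $\a\mapsto D(\a)$ with $D\mapsto e(\l_\cR(p),D)$ is fine once the homeomorphism is in hand.

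The genuine gap is in the injectivity step. The lemma you invoke --- that under (DCT) every character-trivial function of bounded characteristic on $\cS_+$ whose divisor lies over the gaps must be constant --- is false. For $\l_1,\l_2\in(a_j,b_j)$ the function $(\l-\l_1)/(\l-\l_2)$ is single-valued on $\cS_+$, it and its reciprocal are of bounded characteristic, it tends to $1$ at infinity, and its divisor is supported in a gap; yet it is not constant. No soft statement of this type can give injectivity: the question of whether $\a(D_1)=\a(D_2)$ forces $D_1=D_2$ is exactly the uniqueness half of the infinite-dimensional Jacobi inversion problem, and (DCT) enters through the Hardy-space machinery, not through a divisor-rigidity principle. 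Concretely, under (DCT) the reproducing kernel $k^\a$ of $H^2(\a)$ admits the explicit form \eqref{eq:ReproducingKernels} in terms of $e_\cR(\cdot,\a)$ and $e_\cR(\cdot,\a+\fj)$; from $k^\a$ one recovers the Weyl function $m_+^\a$, and from its partial-fraction representation \eqref{eq:intmplus} one reads off the poles $\l_j$ and the signs $\e_j$, i.e.\ the divisor $D$. Since $k^\a$ depends only on $\a$, this recovers $D$ from $\a(D)$ and gives injectivity. Your second, ``equivalent'' formulation (that the divisor is uniquely recoverable from the character) is precisely this statement, but you assert it rather than derive it, so the heart of the proposition is being assumed. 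To close the gap you must either carry out the reproducing-kernel computation (as in Section 3 of the paper) or cite the injectivity theorem of \cite{SoYud95} directly instead of the false intermediate lemma.
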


There is a nice sufficient condition guaranteeing the DCT property holds; specifically, if $E$ is homogeneous, then $\cS_+$ is of Widom type and (DCT) holds. Recall that $E$ is homogenous (in the sense of  Carleson) if there exists $\vk>0$ such that
$$
|E\cap(\xi-\delta,\xi+\delta)|\ge \vk\delta, \quad \forall \xi\in E\ \text{and}\ \forall \d>0.
$$
An example of Widom domain $\cS_+$ such that (DCT) holds but the boundary is not homogeneous is given in \cite{Yud11}.
 
Under our assumptions, we can relate our special functions $e_\cR(p,\a)$ to the reproducing kernels of $H^2(\a)$.  Define $\sqrt{\l}$ in the  domain $\bbC\setminus \bbR_+$ and extend it as character-automorphic function on $\cS_+$. We denote by $\mu_{\cR}(p)$ the corresponding function on $\cR$ with corresponding character by $\fj\in\G^*$,
 $$
 \mu_{\cR}(\un\g p)=e^{2\pi i\fj(\g)}\mu_{\cR}(p).
 $$ 
Since $\mu_{\cR}(p)^2=\l_{\cR}(p)$, $\fj$ is an order 2 element of the group $\G^*$, i.e., $2\fj=0_{\G^*}$.  With this notation, we have the following important identities:
 
 \begin{theorem}\label{t21jan1} Assume that $\cS_+$ is of Widom type with DCT and \eqref{21nov0} holds for $k=1/2$. Then the  reproducing kernel of the space $H^2(\a)$ is of the form
 \begin{equation}\label{20jan3}
k^{\a}(p,p_0)=i\frac{\mu_{\cR}(p)e_{\cR}(p,\a+\fj)\overline{e_{\cR}(p_0,\a)}+e_{\cR}(p,\a)\overline{\mu_{\cR}(p_0)e_{\cR}(p_0,\a+\fj)}}{\l_{\cR}(p)
-\overline{\l_{\cR}(p_0)}};
\end{equation}
alternatively,
 \begin{equation}\label{20jan4}
k^{\a}(p,p_0)=\int_{0}^\infty e^{i(\t_{\cR}(p)-\overline{\t_{\cR}(p_0)}) x}e_{\cR}(p,\a-\eta x)\overline{e_{\cR}(p_0,\a-\eta x)} \dd x.
\end{equation}
 Respectively, the reproducing kernel $K(p,p_0)$ of the space $H^2_{\cR}$ has the forms
  \begin{align}\label{20jan5} 
  K(p,p_0)&=\frac{i}{\mu_{\cR}(p)-\overline{\mu_{\cR}(p_0)}}\int_{\G^*}e_{\cR}(p,\a+\fj)\overline{e_{\cR}(p_0,\a)}\dd\a\\
  &=\frac{i}{\t_{\cR}(p)-\overline{\t_{\cR}(p_0)}}\int_{\G^*}e_{\cR}(p,\a)\overline{e_{\cR}(p_0,\a)}\dd\a. \label{20jan6}
  \end{align}
 \end{theorem}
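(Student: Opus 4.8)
The plan is to treat the four displayed identities as two genuinely new statements — the closed form \eqref{20jan3} for the fibre kernel $k^\a$ and its Fourier counterpart \eqref{20jan4} — together with two corollaries \eqref{20jan5}, \eqref{20jan6} obtained by averaging over the character group. I would dispatch the corollaries first, since they are formal consequences. Because $\cF_1$ is unitary and $\int_{\G^*}e^{-2\pi i\a(\g)}\dd\a=\delta_{\un\g,1_{\G/\G'}}$ by orthogonality of characters, applying $\cF_1$ and integrating in $\a$ gives $K(p,p_0)=\int_{\G^*}k^\a(p,p_0)\,\dd\a$. Substituting \eqref{20jan3} and factoring the denominator as $\l_{\cR}(p)-\overline{\l_{\cR}(p_0)}=(\mu_{\cR}(p)-\overline{\mu_{\cR}(p_0)})(\mu_{\cR}(p)+\overline{\mu_{\cR}(p_0)})$, the measure-preserving change $\a\mapsto\a+\fj$ (using $2\fj=0$) applied to the second numerator term merges the two terms into $(\mu_{\cR}(p)+\overline{\mu_{\cR}(p_0)})\int_{\G^*}e_{\cR}(p,\a+\fj)\overline{e_{\cR}(p_0,\a)}\,\dd\a$, and the common factor cancels to yield \eqref{20jan5}. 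Likewise, substituting \eqref{20jan4} and shifting $\a\mapsto\a-\eta x$ for each fixed $x$ makes the inner $\a$-integral independent of $x$; the remaining $\int_0^\infty e^{i(\t_{\cR}(p)-\overline{\t_{\cR}(p_0)})x}\,\dd x=\frac{i}{\t_{\cR}(p)-\overline{\t_{\cR}(p_0)}}$, valid because the exponent has strictly positive imaginary part on the interior, produces \eqref{20jan6}.

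For the de Branges--type identity \eqref{20jan3}, write $\tilde k^\a_{p_0}$ for its right-hand side as a function of $p$; I would verify it is the reproducing kernel of $H^2(\a)$. Membership $\tilde k^\a_{p_0}\in H^2(\a)$ is checked in three parts: it is of Smirnov class, as a ratio of Smirnov functions whose denominator $\l_{\cR}(p)-\overline{\l_{\cR}(p_0)}$ is non-vanishing on $\cR$ and bounded below on $\pd\cR$ (hence outer); it carries character $\a$, since $\mu_{\cR}\,e_{\cR}(\cdot,\a+\fj)$ has character $\fj+(\a+\fj)=\a$ while $e_{\cR}(\cdot,\a)$ has character $\a$; and it is square-integrable against $\dd\t_{\cR}$ on $\bbE$, because $|\Phi|=1$ on $E$ by regularity (R) and \eqref{18jan10} pins down the boundary modulus of $e_{\cR}$. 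The reproducing property $\langle f,\tilde k^\a_{p_0}\rangle=f(p_0)$ for $f\in H^2(\a)$ is the real content: I would write the inner product as a boundary integral over $\bbE$ and evaluate it by a Cauchy-type argument, the zero of the denominator at $p=p_0$ contributing $f(p_0)$ and the remaining terms cancelling. On an infinitely-connected surface this contour manipulation is exactly the \emph{Direct Cauchy Theorem}, so (DCT) is the hypothesis that powers this step.

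For the Paley--Wiener identity \eqref{20jan4}, the claim is that the Fourier integral \eqref{19jan5} is an isometry of $L^2(\bbR_+)$ onto $H^2(\a)$; granting that, the reproducing kernel of the image is $\int_0^\infty\big[e^{i\t_{\cR}(p)x}e_{\cR}(p,\a-\eta x)\big]\overline{\big[e^{i\t_{\cR}(p_0)x}e_{\cR}(p_0,\a-\eta x)\big]}\,\dd x$, which is \eqref{20jan4}. I would build this on the nest $e^{i\t_{\cR}x}H^2(\a-\eta x)\subseteq H^2(\a)$, $x\ge 0$: here $e^{i\t_{\cR}x}$ is inner, since $\Im\t_{\cR}=M\ge 0$ with $M=0$ on $\pd\cR$, and carries additive character $x\eta$ by \eqref{19jan3}, so these are genuine decreasing invariant subspaces. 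Isometry of \eqref{19jan5} follows from the reflectionless boundary identity for $e_{\cR}$, which turns $\|f\|^2$ into $\int_0^\infty|\hat f(x)|^2\,\dd x$; surjectivity is a Beurling--Lax/completeness statement, namely that the nest exhausts $H^2(\a)$ as $x\to 0^+$ and contracts to $\{0\}$ as $x\to\infty$, the triviality of $\bigcap_{x>0}e^{i\t_{\cR}x}H^2(\a-\eta x)$ being controlled by the Martin--Widom condition (PW$_M$) — equivalently the entropy condition \eqref{eq:entropycondn} — which forbids the inner factors $e^{i\t_{\cR}x}$ from sharing a nontrivial common divisor.

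Since the reproducing kernel of $H^2(\a)$ is unique, \eqref{20jan3} and \eqref{20jan4} must coincide, and this coincidence is precisely a Christoffel--Darboux identity; alternatively one proves it directly by exhibiting a bilinear expression $N(x)$ in $e_{\cR}(\cdot,\a-\eta x)$ and its $\fj$-shift whose derivative telescopes, $-\frac{d}{dx}\big\{e^{i(\t_{\cR}(p)-\overline{\t_{\cR}(p_0)})x}N(x)\big\}=(\l_{\cR}(p)-\overline{\l_{\cR}(p_0)})\times(\text{integrand of }\eqref{20jan4})$, so integrating over $(0,\infty)$ returns the numerator of \eqref{20jan3} at $x=0$ and nothing at $x=\infty$ because $\Im(\t_{\cR}(p)-\overline{\t_{\cR}(p_0)})>0$. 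I expect the main obstacle to be the completeness/isometry of the preceding paragraph — equivalently the (DCT)-based reproducing property: in the finite-gap (compact) case both reduce to residue calculus, but the infinite-genus setting demands the Direct Cauchy Theorem to prevent loss of mass in the Cauchy integral, and it requires (PW$_M$) together with \eqref{21nov0} at $k=\tfrac12$ to guarantee convergence of the canonical products defining $e_{\cR}$ and the triviality of the intersection of the nest. The remaining ingredients — the character bookkeeping, the inner/outer classification, and the measure-preserving changes of variable on $\G^*$ — are routine.
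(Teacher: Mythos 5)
Your treatment of \eqref{20jan5}--\eqref{20jan6} by character orthogonality, Fubini, and the shift-invariance of $\dd\a$ matches the paper (cf.\ Lemma \ref{lem:IsometryReprod}), and your plan for \eqref{20jan3} --- verify membership in $H^2(\a)$ and then run a Cauchy-type boundary integral powered by (DCT) --- is also the paper's route. But as written the Cauchy step has a hole: to apply the Direct Cauchy Theorem you must first rewrite $\overline{k^\a_{p_0}}$ on $E$ as a Smirnov-class integrand, which is exactly the pseudocontinuation identity \eqref{15may3}, $\cW(\x)\overline{e_\a(\x)}=\tilde e_\a(\x)$; and after the pole at $\l_0$ is extracted you only get $\langle f,k^\a_{p_0}\rangle = c\, f(p_0)$ with $c=\sqrt{\l_0}\,\Theta'(\l_0)\bigl(\tilde e_{\a+\fj}(\l_0)e_\a(\l_0)+\tilde e_\a(\l_0)e_{\a+\fj}(\l_0)\bigr)/\cW(\l_0)$. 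Showing $c=1$ is precisely the Wronskian identity \eqref{eq:WronskId}, which in turn rests on the representation \eqref{15may7} of $m_+$ and the normalization $C(\a)=1$ there. ``The remaining terms cancelling'' does not come for free; without \eqref{eq:WronskId} you identify the kernel only up to an $(\a,p_0)$-dependent constant.

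The more serious gap is \eqref{20jan4}. You propose to establish the isometry of the Fourier integral \eqref{19jan5} directly, asserting that it ``follows from the reflectionless boundary identity,'' and to get completeness from a Beurling--Lax argument on the nest $e^{i\t_\cR x}H^2(\a-\eta x)$. The isometry is not a formal consequence of any boundary identity: it is equivalent to the statement that $x\mapsto k^{\a-\eta x}(p,p_0)$ is absolutely continuous and that $-\partial_x\bigl[e^{i(\t_{\cR}(p)-\overline{\t_{\cR}(p_0)})x}k^{\a-\eta x}(p,p_0)\bigr]$ equals $e^{i(\t_{\cR}(p)-\overline{\t_{\cR}(p_0)})x}e_{\cR}(p,\a-\eta x)\overline{e_{\cR}(p_0,\a-\eta x)}$ with constant exactly $1$ --- and that is the entire content of Theorem \ref{thm:FourierIntegral}. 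The paper proves it by a genuinely different and much longer route: it discretizes via the Blaschke/reproducing-kernel Fourier series \eqref{eq:DiscrFour} at points $\l_N\to-\infty$ with $G(-1,\l_N)=M(-1)/N$, proves Lipschitz continuity of $k^{\a-\eta x}(p_0,p_0)$ (Lemma \ref{lem:nuisAC}, which requires constructing an auxiliary inner function with character $-\eta$), passes to the limit to obtain \eqref{eq:ContFour} with an unknown density $f_\a$, and then pins $f_\a\equiv1$ using the mean-type result of \cite{VoYu16} for Blaschke products with zeros in $\bbR_+\setminus E$. None of this is replaced by your appeal to (PW$_M$) --- which is not even a hypothesis of Theorem \ref{t21jan1} --- and your fallback ``telescoping derivative'' argument presupposes differentiability of $\a\mapsto e_\cR(p,\a)$ in the $\eta$-direction, which the paper establishes only later (Theorem \ref{t:chi0diff}) and cannot be used to shortcut Section \ref{ss31}. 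So the skeleton is right for \eqref{20jan3} and the corollaries, but \eqref{20jan4} is asserted rather than proved.
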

 
 \begin{remark}
 As a consequence of \eqref{20jan3} and \eqref{20jan4}, $e_{\cR}(p,\a)$ could instead be defined via the reproducing kernels in the following way
  \begin{equation}\label{21jan0}
\frac{e_{\cR}(p,\a)}{e_{\cR}(p_0,\a)}=\lim_{y\to\infty}\frac{k^\a(p,p(iy))}{k^\a(p_0,p(iy))}
\end{equation}
or
 \begin{equation}\label{21jan1}
e_{\cR}(p,\a)\overline{e_{\cR}(p_0,\a)}=\left(\frac{\t_{\cR}(p)-\overline{\t_{\cR}(p_0)}}{i}+\pd_{\eta}\right)k^{\a}(p,p_0)
\end{equation}
While definitions \eqref{21jan0} and \eqref{21jan1} demonstrate the relationship between the special function $e_\cR$ and the reproducing kernels $k^\a$, we prefer our definition \eqref{19an1} because it demonstrates immediately the importance of the DCT condition and is constructive in nature.
 \end{remark}
 
 \subsection{The KdV hierarchy via the functional model}
 
The description of the reproducing kernels of $H^2(\a)$ from Theorem \ref{t21jan1} yields, in fact, 
the representation for the Weyl-Titchmarsh  $m$-function:
 
 \begin{corollary} In the setting of Theorem \ref{t21jan1},
 \begin{equation}\label{20jan6}
m^\a_{+}(\l_{\cR}(p))=m^\a_+(0)+i\mu_{\cR}(p)\frac{e_{\cR}(p,\a+\fj)}{e_{\cR}(p,\a)}
\end{equation}
together with the asymptotics
$$
\lim_{\mu\to\infty}\left(m_+^\a(\mu^2)-i\mu\right)=0,
$$
defines a single-valued function $m_+^\a(\l)$ in $\cS_+$ with positive imaginary part in $\bbC_+$.  This function possesses the reflectionless property \eqref{30jan10}.
Moreover,
 if the $k$-th gap length condition \eqref{21nov0} holds, then we may define the system of functions $\{\chi_n(\a)\}_{n=0}^{2k}$ by the following asymptotic expansion:
 \begin{equation}\label{20jan7}
\lim_{\mu\to\infty}\mu^{n+1}\left\{\frac{m_+^\a(\mu^2)-m_+^\a(0)}{i\mu}-1-\frac{\chi_0(\a)}{\mu}-\dots\frac{\chi_{n-1}(\a)}{\mu^n}\right\}=\chi_{n}(\a).\quad
n=0,\dots,2k,
\end{equation}
 In particular, $\chi_0(\a)=i m_+^\a(0)$.

\end{corollary}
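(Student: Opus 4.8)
The plan is to read the $m$-function off the de~Branges-type form of the reproducing kernel in Theorem~\ref{t21jan1}, and then to analyse its behaviour at infinity, where the two gap-length conditions enter. Abbreviate $E_1(p)=e_{\cR}(p,\a)$ and $E_2(p)=\mu_{\cR}(p)\,e_{\cR}(p,\a+\fj)$. Formula \eqref{20jan3} exhibits the kernel as
\[
k^\a(p,p_0)=i\,\frac{E_2(p)\overline{E_1(p_0)}+E_1(p)\overline{E_2(p_0)}}{\l_{\cR}(p)-\overline{\l_{\cR}(p_0)}},
\]
which is precisely the reproducing kernel attached to the pair of Weyl solutions $E_1,E_2$ of $L_V$ furnished by the Baker--Akhiezer function \eqref{19jan50}. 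The Weyl function associated with such a de~Branges structure is the ratio $iE_2/E_1$ up to a real additive constant, and naming that constant $m_+^\a(0)$ yields exactly the representation \eqref{20jan6}.

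First I would verify that the candidate descends to a function on $\cS_+$. The character of $\mu_{\cR}\,e_{\cR}(\cdot,\a+\fj)$ is $\fj+(\a+\fj)=\a+2\fj=\a$ because $2\fj=0_{\G^*}$, while $e_{\cR}(\cdot,\a)$ carries the character $\a$; hence the quotient $E_2/E_1$ is character-automorphic with trivial character and is single-valued on $\cS_+$. The Herglotz property then follows from positivity of the kernel: setting $p_0=p$ above gives $k^\a(p,p)=\Re\big(E_2(p)\overline{E_1(p)}\big)/\Im\l_{\cR}(p)$, so that $\Im\big(iE_2/E_1\big)=k^\a(p,p)\,\Im\l_{\cR}(p)/|E_1(p)|^2>0$ whenever $\Im\l_{\cR}(p)>0$; since $0=\inf E$ and $m_+^\a(0)$ is real there, $m_+^\a$ maps $\bbC_+$ into itself. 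The reflectionless property \eqref{30jan10} I would read off from the symmetry of the special functions across $E=\pd\cS_+$: complex conjugation exchanges the two boundary points over each $\xi\in E$ and acts on $e_{\cR}$ by conjugation and on $\mu_{\cR}=\sqrt{\l_{\cR}}$ by a sign change, which turns \eqref{20jan6} into the required boundary relation for $m_+^\a$ on $E$.

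It remains to control $m_+^\a$ as $\mu\to\infty$. Using \eqref{19ja1} one rewrites $e_{\cR}(p,\a+\fj)/e_{\cR}(p,\a)$ as a product of $\Phi$-factors and, via $|\Phi|=e^{-G}$ together with \eqref{18jan5}, as an exponential of Green-function integrals $\tfrac1\pi\int G(\xi,\l)\,\dd\xi^{s}$. Convergence of the full canonical product under the case $k=\tfrac12$ of \eqref{21nov0} already forces $e_{\cR}(p,\a+\fj)/e_{\cR}(p,\a)\to1$, hence $m_+^\a(\mu^2)=i\mu+O(1)$; the normalization $\lim_{\mu\to\infty}\big(m_+^\a(\mu^2)-i\mu\big)=0$ then fixes the additive constant, and matching the $\mu^0$-term gives $\chi_0(\a)=i\,m_+^\a(0)$. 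For the \emph{moreover} part one expands $G(\xi,\l)$ in powers of $1/\mu$ and integrates term by term against $\dd\xi^{1/2}$: the coefficient of $\mu^{-(n+1)}$ is a gap moment of order $\tfrac{n+1}{2}$, finite precisely up to $n=2k$ by \eqref{21nov0}. This produces the expansion \eqref{20jan7} and the functions $\chi_0(\a),\dots,\chi_{2k}(\a)$.

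The main obstacle is this last step: justifying the term-by-term expansion of the canonical products, i.e.\ proving that the $k$-th gap-length condition \eqref{21nov0} is exactly what renders the expansion coefficients $\chi_n(\a)$ finite for $0\le n\le 2k$ (and controls the remainder uniformly in $\a$), so that the sharp order $2k$ in \eqref{20jan7} matches the sharp moment order $k+\tfrac12$ in \eqref{21nov0}. This is the genuinely analytic heart of the statement, deferred to Section~4; by contrast the representation \eqref{20jan6} itself and the single-valuedness, Herglotz, and reflectionless properties are structural consequences of Theorem~\ref{t21jan1} and the arithmetic $2\fj=0_{\G^*}$.
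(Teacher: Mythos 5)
Your structural observations are sound: the character arithmetic $\a+2\fj=\a$ does give single-valuedness, and the positivity computation $\Im\bigl(iE_2/E_1\bigr)=k^\a(p,p)\,\Im\l_{\cR}(p)/|E_1(p)|^2$ is exactly the right way to extract the Herglotz property from \eqref{20jan3}. But note a logical hazard in your starting point: in the paper, \eqref{20jan3} is itself \emph{derived from} the representation $m_+(\l)-m_+(0)=i\sqrt{\l}\,e_{\a+\fj}(\l)/e_\a(\l)$ (equation \eqref{15may7}), which is proved on the spectral-theory side from the additive formula \eqref{eq:intmplus} for $m_+$ and the Kotani--Last theorem that $m_+\circ\bl$ is of bounded characteristic with inner part a ratio of Blaschke products. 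Reading the $m$-function back off the kernel therefore risks circularity unless you independently identify $E_1,E_2$ with genuine Weyl data of $L_V$; the paper does that identification first and only then obtains the kernel. Likewise, the reflectionless property is not a bare symmetry statement: it requires the pseudocontinuation identity $\overline{e_{\cR}(p,\a)}=e_{\cR}(p,\a_{\cW}-\a)/\cW_{\cR}(p)$ of \eqref{22jan2} to identify the conjugated ratio as $-m_-^\a$ with $m_-^\a=m_+^{\a_{\cW}-\a}$.

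The genuine gap is in the \emph{moreover} part. Your proposed mechanism --- expand $G(\xi,\l)$ in powers of $1/\mu$ and integrate term by term against $\dd\xi^{1/2}$ --- conflates the canonical products \eqref{19ja1}, whose logarithms are \emph{sums} of Green functions at the divisor points $\l_j$ and critical points $c_j$, with the Martin-function integral \eqref{18jan5}; and in any case it does not address the real difficulty. The paper splits $m_+^\a$ via \eqref{eq:intmplus}: the odd coefficients $\chi_{2m+1}$ come from the exponential representation of $-1/(2R_D)$ and are standard polynomials in the moments $\tau_m=\int\xi^m\ff^D(\xi)\,\dd\xi$, finite and continuous in $D$ under \eqref{21nov0}; the even coefficients are $\chi_{2k}(\a(D))=\tfrac{i}{2}\sum_{j}\s_j^D\l_j^{k}$, and the analytic heart is proving $\sup_{D\in\cD(E)}\sum_j\s_j^D\l_j^k<\infty$. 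That bound is obtained by introducing an auxiliary Nevanlinna function $Z(\mu)$ in the variable $\mu=\sqrt{\l}$, computing its residues $\rho_n^{\pm}$, comparing $\s_n$ with $\rho_n^++\rho_n^-$, and using that $\int\xi^{2k}\k(\xi)\,\dd\xi=\sum_j(b_j^{k+1/2}-a_j^{k+1/2})/(2k+1)$ is exactly the $k$-GLC quantity. The weights $\s_j^D$ involve infinite products over the entire divisor and are not gap moments, so no term-by-term expansion of Green-function integrals can produce this estimate. As written, your proposal establishes the easier structural half of the corollary and defers, with a pointer in the wrong direction, the part that actually requires proof.
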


Following Dubrovin et al. \cite{DKN}, we use the system of functions $\{\chi_n(\alpha)\}_{n=0}^{2k}$ to generate the $k$-th element of the KdV hierarchy $\KdV_k(\alpha)$.  Specifically, we show that
the generalized eigenfunction $e_{\cR}(p,\a)$ itself satisfies the following differential relation
\begin{equation}\label{pe17oct}
(\t_{\cR}^{(k)}(p)+i\pd_{\eta^{(k)}})e_{\cR}(p,\a)
=A_k(\l_{\cR}(p),\a) \mu_{\cR}(p)e_{\cR}(p,\a+\fj)-B_k(\l_{\cR}(p),\a)
 e_{\cR}(p,\a),
\end{equation}
where  $A_k(\l,\a)$ and $B_k(\l,\a)$ are polynomials of degree $k$,
$$
A_k(\l,\a)=\sum_{n=0}^k \cA_{k}(\a)\l^{k-n}, \quad B_k(\l,\a)=\sum_{n=0}^k \cB_{k}(\a)\l^{k-n}.
$$
 We compute their coefficients by asymptotics  \eqref{20jan7} and consequently obtain
\begin{align}\label{20jan8}
\begin{bmatrix} 1\\ \cA_1(\a)\\ \vdots\\ \cA_{k}(\a)
\end{bmatrix}&=\begin{bmatrix}
1 & & & \\
\chi_1(\a)&1& & \\
 \ddots& \ddots& \ddots&  \\
 \chi_{2k-1}(\a)&\ddots&\chi_1(\a)&1\\
\end{bmatrix}^{-1}\begin{bmatrix} 1\\ 0\\ \vdots\\ 0
\end{bmatrix},
\\
\label{20jan9}
\begin{bmatrix} \cB_0(\a)\\ \cB_1(\a)\\ \vdots\\ \cB_{k}(\a)
\end{bmatrix}
&=\begin{bmatrix}
\chi_0(\a) & & & \\
\chi_2(\a)&\chi_0(\a)& & \\
 \ddots& \ddots& \ddots&  \\
 \chi_{2k}(\a)&\ddots&\chi_2(\a)&\chi_0(\a)\\
\end{bmatrix}\begin{bmatrix} 1\\ \cA_1(\a)\\ \vdots\\ \cA_{k}(\a)
\end{bmatrix}.
\end{align}
Moreover, the partial derivatives in the $\eta$-direction of the coefficients exist; specifically, 
\begin{equation}\label{20jan10}
\cB_n(\a)=\left(\frac {i\pd_{\eta}} 2+\chi_0(\a)\right)
 \cA_n(\a), \quad n=0,\dots, k.
\end{equation}
Equation \eqref{pe17oct}, together with \eqref{20jan8}--\eqref{20jan10}, essentially leads to our main result.

\begin{theorem}[Main Theorem]\label{t:mainthm}
Let   (PW$_M$) and (DCT) conditions hold in a regular domain $\cS_+$. If the $k$-th gap length condition \eqref{21nov0} holds,
then for $\a\in\G^*$, the following differential operators $L_\a$ and $P^{(k)}_\a$ are well defined by the decomposition
\eqref{20jan7} 
$$
 L_\a=-\pd_x^2+V^\a(x), \quad P^{(k)}_{\a}:=i\sum_{n=0}^k L_\a^{k-n}\left(A^\a_n(x)\pd_x+\frac 3 2 (A_n^\a)'(x)\right),
$$
where
$
V^\a(x)=2\cA_1(\a-\eta x),\quad 
A_n^\a(x)=\cA_n(\a-\eta x)
$,  see \eqref{20jan8}.

Let
$$
f(p,\a)=\int_{0}^\infty e_{\cR}(p,\a-\eta x)e^{i\t_{\cR}(p)x}\hat f(x,\a) \dd x.
$$
Then in the Fourier transforms  \eqref{19jan4} and \eqref{19jan5} the multiplication operators by $\l_{\cR}$ and $\t_{\cR}^{(k)}$ are of the form
\begin{align}
\l_{\cR}(p)f(p,\a)=\int_{0}^\infty e_{\cR}(p,\a-\eta x)e^{i\t_{\cR}(p)x}
L_\a \hat f(x,\a) \dd x,\\
(\t_{\cR}^{(k)}(p)+i\pd_{\eta^{(k)}})f(p,\a)=\int_{0}^\infty e_{\cR}(p,\a-\eta x)e^{i\t_{\cR}(p)x}
\{i\pd_{\eta^{(k)}}+P^{(k)}_\a\}\hat f(x,\a) \dd x.
\end{align}
If the $(k+1)$-th gap length condition \eqref{21nov0} holds, the commutant relation between these operators corresponds to the Lax-pair representation for the k-th KdV hierarchy equation
$$
i\pd_{\eta^{(k)}} L_\a=[L_\a,P^{(k)}_\a].
$$
Respectively, the time evolution is given by $L_{\a(t)}=L_{\a-\eta^{(k)}t}$.
\end{theorem}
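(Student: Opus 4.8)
The plan is to reduce the entire statement to two transform identities, after which the Lax representation and the time evolution follow almost formally. Multiplication by the functions $\l_{\cR}(p)$ and $\t_{\cR}^{(k)}(p)$ on the global model $H^2_{\cR}$ trivially commute, being pointwise multiplications; so once I identify their images under the Fourier transforms \eqref{19jan4} and \eqref{19jan5}, the commutant relation is automatic. Thus the heart of the proof is (i) that multiplication by $\l_{\cR}$ conjugates to $L_\a$, (ii) that the (well-defined) operator $\t_{\cR}^{(k)}+i\pd_{\eta^{(k)}}$ conjugates to $i\pd_{\eta^{(k)}}+P^{(k)}_\a$, and (iii) extracting the Lax equation and the flow.

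For (i) I would specialize the differential relation \eqref{pe17oct} to $k=0$, where $A_0\equiv 1$ and $B_0=\cB_0=\chi_0$, giving
\begin{equation*}
(\t_{\cR}(p)+i\pd_\eta)e_{\cR}(p,\a)=\mu_{\cR}(p)e_{\cR}(p,\a+\fj)-\chi_0(\a)e_{\cR}(p,\a).
\end{equation*}
Writing $\Psi_0=e^{i\t_{\cR}x}e_{\cR}(p,\a-x\eta)$ and $\Psi_1=\mu_{\cR}e^{i\t_{\cR}x}e_{\cR}(p,\a+\fj-x\eta)$ and using $\pd_x e_{\cR}(p,\a-x\eta)=-\pd_\eta e_{\cR}(p,\a-x\eta)$, this relation and its companion at the character $\a+\fj$ (recall $2\fj=0_{\G^*}$) yield the first-order system $\pd_x\Psi_0=i\Psi_1-i\chi_0(\a-x\eta)\Psi_0$ and $\pd_x\Psi_1=i\l_{\cR}\Psi_0-i\chi_0(\a+\fj-x\eta)\Psi_1$, where I used $\mu_{\cR}^2=\l_{\cR}$. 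Eliminating $\Psi_1$ gives a second-order equation for $\Psi_0$ whose first-order term vanishes exactly when $\chi_0(\a+\fj)=-\chi_0(\a)$, an antisymmetry I would verify from the reflectionless property of $m_+^\a$ and the normalization at the spectral infimum. What remains is $-\pd_x^2\Psi_0+V^\a(x)\Psi_0=\l_{\cR}(p)\Psi_0$ with $V^\a(x)=i\pd_\eta\chi_0(\a-x\eta)-\chi_0(\a-x\eta)^2$; the Riccati-type recursion $2\chi_1=\chi_0^2-i\pd_\eta\chi_0$ among the coefficients of \eqref{20jan7}, together with $\cA_1=-\chi_1$ from \eqref{20jan8}, identifies this with $V^\a=2\cA_1(\a-x\eta)$. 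Hence $\Psi_0$ is a generalized eigenfunction of $L_\a$, and self-adjointness of $L_\a$ (moving it onto $\hat f$ by integration by parts, with vanishing boundary terms at $0$ and $\infty$) gives the first transform identity.

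For (ii), the main and most delicate step, I would feed the full relation \eqref{pe17oct} through \eqref{19jan5}. Using (i) to replace each power $\l_{\cR}^{k-n}$ by $L_\a^{k-n}$, and the first-order relation above to replace $\mu_{\cR}e_{\cR}(\cdot,\a+\fj-x\eta)$ by $(-i\pd_x+\chi_0(\a-x\eta))\Psi_0$, the term $A_k(\l_{\cR},\a)\mu_{\cR}e_{\cR}(\cdot,\a+\fj)$ becomes $\sum_n L_\a^{k-n}\cA_n(\a-x\eta)(-i\pd_x+\chi_0(\a-x\eta))\Psi_0$, while $B_k e_{\cR}$ becomes $\sum_n\cB_n(\a-x\eta)L_\a^{k-n}\Psi_0$. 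Transferring to the $\hat f$-side by integration by parts converts $\pd_x$ into $-\pd_x$ and differentiates the coefficients $\cA_n(\a-x\eta)$, producing $(A_n^\a)'$ terms; the recombination of the $A_k$- and $B_k$-contributions into the symmetric Lax form $A_n^\a\pd_x+\tfrac32(A_n^\a)'$ is governed precisely by the coefficient identity \eqref{20jan10}, $\cB_n=(\tfrac{i\pd_\eta}{2}+\chi_0)\cA_n$. Getting this bookkeeping right — in particular the factor $\tfrac32$ and the consistent handling of $i\pd_{\eta^{(k)}}$, which is forced because $\t_{\cR}^{(k)}$ is only additively character automorphic — is where I expect the real work to lie. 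The outcome is the second identity, with $P^{(k)}_\a=i\sum_{n=0}^k L_\a^{k-n}(A_n^\a\pd_x+\tfrac32(A_n^\a)')$.

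Finally, for (iii) I would argue formally. Since multiplication by $\l_{\cR}$ and by $\t_{\cR}^{(k)}$ commute on $H^2_{\cR}$, their transforms commute, i.e.\ $[L_\a,\,i\pd_{\eta^{(k)}}+P^{(k)}_\a]=0$. Because $V^\a(x)=2\cA_1(\a-x\eta)$ depends on $\a$, a direct computation gives $[L_\a,i\pd_{\eta^{(k)}}]=-i(\pd_{\eta^{(k)}}V^\a)$ as a multiplication operator, whence $[L_\a,P^{(k)}_\a]=i\pd_{\eta^{(k)}}V^\a=i\pd_{\eta^{(k)}}L_\a$, the asserted Lax pair; the $(k+1)$-th gap length condition enters exactly here, furnishing one further order in \eqref{20jan7} so that $[L_\a,P^{(k)}_\a]$ is genuinely of order zero and matches the KdV$_k$ nonlinearity (for $k=1$ one checks it reproduces $\tfrac14\pd_x^3 V-\tfrac32 V\pd_x V$ after fixing the normalization of $t_k$). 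Setting $\a(t)=\a-\eta^{(k)}t$ turns the Lax equation into the standard isospectral form for $L_{\a(t)}$, so that $V(x,t_k)=2\cA_1(\a_0-x\eta-t_k\eta^{(k)})$ solves the Cauchy problem \eqref{eq:kdvh.1}; uniform almost-periodicity in $(x,t_k)$ follows since $\cA_1$ is continuous on the compact group $\G^*$ (via the DCT-homeomorphism of the Abel map) and $(x,t_k)\mapsto\a_0-x\eta-t_k\eta^{(k)}$ is a linear flow on $\G^*$.
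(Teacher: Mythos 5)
Your proposal is correct and follows essentially the same route as the paper: both obtain the two transform identities from the differential relation \eqref{pe17oct} for $e_{\cR}(p,\a)$ (specialized to $k=0$ for the Schr\"odinger equation, and recombined via the coefficient identity \eqref{20jan10} into the form of Lemma \ref{16novl1} for general $k$), push them through the Fourier integral \eqref{19jan5} by integration by parts, and extract the Lax equation and the flow $\a(t)=\a-\eta^{(k)}t$ formally from the commutativity of the two multiplication operators on $H^2_{\cR}$. The only cosmetic differences are that the paper gets the first identity from the spectral-theoretic Weyl-solution representation (Theorem \ref{thm:DY}) and organizes the bookkeeping for the second through the matrix identities \eqref{15nov1}--\eqref{16nov1}, whereas you derive the Schr\"odinger equation by eliminating $\Psi_1$ from the first-order system (using the same antisymmetry $\chi_0(\a+\fj)=-\chi_0(\a)$ that the paper uses implicitly) --- an equivalent computation.
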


\begin{remark}The $P_\a^{(k)}$ defined in the main theorem are indeed the Lax pair operators corresponding to the KdV hierarchy; in fact, this is relatively straightforward to recover from the definition of the coefficients $\chi_n$ (cf. Dubrovin et al, \cite[Section 30.2]{DFN}).  In the general setting, for a sufficiently smooth potential $V$, one can define coefficients $\chi_n(V)$ by formally expanding the $m$-function of $L_V$.  These coefficients are polynomial in $V$ and its derivatives by the Ricatti equation.  Then if one defines
\begin{align*}
\KdV_k(V) = \frac{\dd}{\dd x} \frac{\d}{\d V} \int \chi_{2k+3} \dd x
\end{align*}
(where $\d/\d V$ is a variational derivative with respect to $V$), one can prove the existence of operators $P_V^{(k)}$ such that
\begin{align*}
-i[L_V, P_V^{(k)}] = \KdV_k(V).
\end{align*}
In our case, we have from the expansion \eqref{20jan7} that $P_\a^{(k)} = P_{V^\a}^{(k)}$.  For example, one can verify directly from our results that
\begin{align*}
P_\a^{(1)} = i\left(-\partial_x^3 + \frac{3}{2}V^\a \partial_x + \frac{3}{4}(\partial_x V^\a)\right)
\end{align*}
(cf. Theorem \ref{t:chi0diff}), which yields the traditional KdV equation in the Lax pair formulation.
\end{remark}

\begin{remark}\label{rem30jan1}
A natural question: is there a counterpart for the second sheet $\cS_-$, $\cS=\cS_+\cup\cS_-\cup\pd \cS_+$, in the infinite dimensional case? Again, using universal covering we can reduce the answer to a well known object in the theory of Hardy spaces in $\bbC_+$ \cite[Lecture II, Sect. 1]{Nik}. We say that a function $f$ of bounded characteristic in the upper half plane has a \textit{pseudocontinuation} in the lower half plane $\bbC_-$ if there is a function $g$ of bounded characteristic in the lower half plane such that
$$
f(x+i0)=g(x-i0) \quad \text{for almost all} \ x\in\bbR \ \text{w.r.t.}\ dx.
$$
An equivalent statement: there exists a function $h$  of bounded characteristic  in $\bbC_+$ such that
$$
\overline{f(x+i0)}=h(x+i0) \quad \text{for almost all} \ x\in\bbR \ \text{w.r.t.}\ dx.
$$
In this case $\overline{g(\bar z)}=h(z)$, $z\in\bbC_+$. Using this notion we can say that $e_{\cR}(p,\a)$ possesses a pseudocontinuation in the sense that $\overline{e_{\cR}(p,\a)}$, $p\in\pd\cR_+$, can be extended in $\cS_+$ as a function of bounded characteristic. We can write this extension explicitly. We introduce the \textit{Widom function}, which is the Blaschke product
\begin{equation}\label{22jan1}
\cW_{\cR}(p)=\prod_{j\ge 1}\Phi(\l_{\cR}(p),c_j)
\end{equation}
and denote its character by $\a_{\cW}$.
In this case
\begin{equation}\label{22jan2}
\overline{e_{\cR}(p,\a)}=\frac{e_{\cR}(p,\a_{\cW}-\a)}{\cW_{\cR}(p)}
\end{equation}
for almost all $p\in\pd\cR$ w.r.t. $d\t_{\cR}$. The last relation is easy to explain in the following way: $\cW_{\cR}(p)\overline{e_{\cR}(p,\a)}$, 
$p\in\pd\cR$,
has a  form of the canonical product with a certain $D_*\in\cD(E)$. Therefore this is $e_{\cR}(p,\a(D_*))$. It remains to note that by the definition the character of this function is $\a_{\cW}-\a$, that is, $\a(D_*)=\a_{\cW}-\a$. 

The relation \eqref{22jan2} is closely related with a description of the orthogonal complement of the Hardy spaces. Let us define
$$
H_{\bot}^2(\a):=L^2_{d\t_{\cR}}(\a)\ominus H^2(\a).
$$
In this case for an arbitrary Widom surface 
$$
\cW_\cR(p) \overline{g(p)}\in H^2(\a_{\cW}-\a)\ \text{for all}\ g\in H_{\bot}^2(\a).
$$
But
$$
H_{\bot}^2(\a)=\{g: \cW_\cR(p) \overline{g(p)}\in H^2(\a_{\cW}-\a)\}
$$
\textit{if and only if DCT holds}.

A notion of pseudocontinuation is very closely related with the notion of the \textit{reflectionless property} in the theory of ergodic operators. The role of this property in the spectral theory was completely understood in \cite{Rem11}, see also \cite{PoRem09}. Equation \eqref{22jan2}  implies that
\begin{equation}\label{30jan10}
\overline{m_+^\a(\l+i0)}=-m_-^\a(\l+i0)\ \text{for almost all}\ \l\in E,
\end{equation}
where $m_-^\a(\l):=m_+^{\a_{\cW-\a}}(\l)$. This is exactly means that the Nevanlinna class functions $m_{\pm}^\a(\l)$ possess  reflectionless property on $E$, see \eqref{15jan1}.
\end{remark}

With this discussion in hand, Theorem \ref{t:introthm} follows almost immediately from our main theorem:
\begin{proof}[Proof of Theorem \ref{t:introthm}]
If $V$ is ergodic such that $L_V$ has absolutely continuous spectrum $E$, then $L_V$ is reflectionless on $E$ by Kotani theory.  Since $E$ is homogeneous, $\cS_+$ is a regular domain of Widom type with DCT; consequently, $V = V^\a$ for some $\a \in \Gamma^*$.  Since $E$ satisfies the $(k+1)$-th moment condition, $V(\cdot,t_k) := V^{\a + \eta^{(k)}t_k}$ satisfies \eqref{eq:kdvh.1}, \eqref{eq:kdvh.2} and is almost-periodic in $x$ and $t_k$.
\end{proof}

\section{Preliminaries and elements of spectral theory}
Let $V:\bbR\to\bbR$ be a continuous real-valued function which is bounded from below.  Consider the associated one-dimensional Schr\"odinger operator 
\begin{align*}
	L_V=-\pd_x^2+V(x),\quad x\in\bbR
\end{align*}
which is an unbounded self-adjoint operator on $L^2(\bbR)$ with domain 
\begin{align*}
	\dom(L_V)=\{f\in L^2(\bbR): f,f'\in\text{AC}_{\text{loc}}(\bbR), L_Vf\in L^2(\bbR)\}
\end{align*}
and resolvent (or Green's) function
\begin{align*}
	R(\l;x_0,x_1)=\langle(L-\l)^{-1}\d_{x_1},\d_{x_0} \rangle,\quad \l\in\bbC\setminus\sigma(L_V),\ x_0,x_1\in\bbR.
\end{align*}
By  $u_{1,2}(x,x_0,\l)$ we denote the unique fundamental system satisfying 
\begin{align*}
	L_Vu_{1,2}(x,x_0,\l)=\l u_{1,2}(x,x_0,\l),\quad \l\in\bbC,
\end{align*} 
subject to the boundary conditions at $x_0\in\bbR$
\begin{align*}
\begin{bmatrix}
	u_1(x_0,x_0,\l)& u_2(x_0,x_0,\l)\\
	\pd_x u_1(x_0,x_0,\l)&\pd_x u_2(x_0,x_0,\l)
	\end{bmatrix}=
	\begin{bmatrix}
	1&0\\
	0&1
	\end{bmatrix}.
\end{align*}

The condition that $V$ is bounded from below implies that $L_V$ is in the limit point case at $+\infty$ and $-\infty$, and hence we may define the Weyl-Titchmarsh functions $m_{\pm}(\l;x_0)$ uniquely by the condition
$$
u_\pm(x,x_0,\l)=u_1(x,x_0,\l)\pm m_\pm(\l;x_0)u_2(x,x_0,\l)\in L^2([x_0,\pm\infty).
$$
The diagonal of the resolvent function is given in terms of $m_\pm$ by 
\begin{equation}\label{18may1}
-\frac 1{R(\l;x,x)}=m_+(\l;x)+m_-(\l;x).
\end{equation}
If $x_0=0$, we use the abbreviations $R(\l)=R(\l;0,0)$ and $m_\pm(\l)=m_\pm(\l;0)$. 

Since $V$ is bounded from below, so is the spectrum $\sigma(L_V)$ of $L_V$,  i.e. $\inf\sigma(L_V) > -\infty$.  By translation, we may assume without loss of generality that $\inf\sigma(L_V) = 0$.
We choose the branch of $\sqrt{\l}$ such that $i\sqrt{\l}<0$ if $\l<0$. The functions $R$ and $m_\pm$ have asymptotic expansions of the form 
\begin{align}\label{eq:asympGreen}
R(\l,x,x)=\frac i{2\sqrt{\l}}+ \frac{i V(x)}{4\sqrt{\l}^3}+o(\sqrt{\l}^{-3})
\end{align}
and 
\begin{align}\label{eq:asympmplus}
	m_{\pm}(\l,x)=i\sqrt{\l}- \frac{i V(x)}{2\sqrt{\l}}+o(\sqrt{\l}^{-1}).
\end{align}
as $\l\to-\infty$. 

\subsection{Some inverse spectral theory}\label{ss21}


We recall from the introduction our preliminary assumptions on $E$: namely, $E \subset \bbR_+$ is a closed set of positive Lebesgue measure of the form
\begin{align}\label{def:setE}
E=\bbR_+\setminus\bigcup_{j=1}^\infty(a_j,b_j)
\end{align} 
without isolated points, such that the domain $\cS_+ = \bbC \setminus E$ satisfies (R), ($1/2$-GLC), (PW), and (DCT).  We now provide an alternative characterization of the DCT property.

Recall that a meromorphic function $f$ in $\bbC_+$  is said to be of bounded characteristic if it can be represented as the ratio of two bounded analytic functions, $f=f_1/f_2$. The function is of Smirnov class if in addition $f_2$ is an outer function, cf. e.g. \cite{Gar07}. We say that a function $F$ on $\cS_+$ belongs to the Smirnov class $\cN_+=\cN_+(\cS_+)$, if $f=F\circ \bl$ is of Smirnov class in $\bbC_+$.  
	 We say that the Direct Cauchy Theorem  holds in $\cS_+$ if for every $F\in \cN_+$ with
	 \begin{align*}
	 	\oint_E\left|\frac{F(\x)}{1+\x}\right||\dd \x|<\infty,
	 \end{align*}
	 	we have
	 \begin{align*}
	 	\frac{1}{2\pi i}\oint_E\frac{F(\x)}{\x-\l_0}\dd \x=F(\l_0), \quad \l_0 \in \cS_+.
	 \end{align*}
	 Here the contour integral is shorthand for integrating over both the ``top" and ``bottom" of $E$, i.e.
	 \begin{align*}
	 \oint_E F(\x) \dd \x := \int_E F(\x+i0)\dd \x + \int_E F(\x - i0) \dd \x.
	 \end{align*}

Later, we will add to these conditions ($k$-GLC) for higher $k$, but for now these conditions will suffice.  Under these assumptions, a certain class of Schr\"odinger operator is particularly amenable to inverse scattering techniques:
\begin{definition}
	We call $L_V$ reflectionless on $A\subset \bbR$, with $|A|>0$  if 
	\begin{equation}\label{15jan1}
		m_+(\x+i0)=-m_-(\x-i0),\quad \text{for a.e.}\;\x\in A.
	\end{equation}
	For a given set $E$, we define the set of potentials
	$$
	\cV(E):=\{V: \sigma(L_V)=E \text{ and } L_V \text{ is reflectionless on E}\}.
	$$
\end{definition}

Note that \eqref{18may1} and \eqref{15jan1} imply
\begin{equation}\label{15jan2}
\Re R(\x+i0):=\Re R(\x+i0,0,0)=0\quad \text{for a.e.}\;\x\in E.
\end{equation}
Since $R(\l)$ is real and monotonic in the gaps $(a_j, b_j)$, for each $j \in \bbN$ there exists a unique ``Dirichlet eigenvalue"
$\l_j\in[a_j,b_j]$ such that
\begin{align*}
\frac 1\pi \arg R(\x)=\begin{cases}
\frac 1 2,\quad &\x\in E,\\
1,\quad &\x\in(a_j,\l_j),\\
0,\quad &\x\in(\l_j,b_j).
\end{cases}
\end{align*} 
Therefore the resolvent function  can be represented by 
\begin{align}\label{eq:intRepGreen}
	R(\l)= C\frac{i}{\sqrt{\l}}e^{-\int_{0}^{^\infty}\left\{\frac{1}{\xi-\l}-\frac{\xi}{1+\xi^2}\right\}\ff(\xi)\dd \xi},\quad \ff(\xi):=\frac 1 2 - \frac 1\pi \arg R(\xi), \
	 C>0.
\end{align}
Moreover, by the finite length gap condition and the normalization  \eqref{eq:asympGreen} we can compute $C$ and obtain the product formula
\begin{align}\label{eq:multRep}
R(\l)=\frac{i}{2\sqrt{\l}}\prod_{j=1}^{\infty}\frac{1-\l_j/\l}{\sqrt{(1-a_j/\l)(1-b_j/\l))}}.
\end{align}
In particular, \eqref{eq:asympGreen} and \eqref{eq:intRepGreen} yield the trace formula
\begin{align*}
	V(0)=\sum_{j=1}^\infty a_j+b_j-2\l_j.
\end{align*}
We now state a lemma which is fundamental in what follows; specifically, under the conditions above the infemum of the spectrum is a regular point for the Weyl $m$-functions:
\begin{lemma}\label{lem:limitMfunction}
	Let $V\in\cV(E)$ and $m_\pm$ be the corresponding Weyl-Titchmarsh functions. Then the following limits exist and are finite:
	\begin{align*}
		\lim\limits_{\x\to 0^-}m_\pm(\x)=m_\pm(0).
	\end{align*}
	Moreover
	\begin{align*}
		m_+(0)=-m_-(0).
	\end{align*}
\end{lemma}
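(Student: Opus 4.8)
The plan is to treat $m_+$ and $m_-$ as Nevanlinna (Herglotz) functions and to reduce both assertions to the behaviour of the diagonal Green function $R$ at the edge of the spectrum. Since $\inf\sigma(L_V)=0$, both $m_\pm$ are analytic on $(-\infty,0)$ and, being Herglotz functions, are strictly increasing there; hence the monotone limits $m_\pm(0):=\lim_{\xi\to0^-}m_\pm(\xi)$ exist in $(-\infty,+\infty]$, the finite lower bound coming from monotonicity (for a fixed $\xi_0<0$ one has $m_\pm(\xi)\ge m_\pm(\xi_0)$ for $\xi\in(\xi_0,0)$). The only remaining questions are whether these limits are finite and whether they sum to zero, and for this I would use the relation \eqref{18may1}, $m_+(\xi)+m_-(\xi)=-1/R(\xi)$, together with the product representation \eqref{eq:multRep} of $R$.

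The key step is to show that $R(\xi)\to+\infty$ as $\xi\to0^-$, equivalently $1/R(\xi)\to0$. With the chosen branch $\sqrt{\xi}=i|\xi|^{1/2}$ for $\xi<0$, the prefactor in \eqref{eq:multRep} equals $\frac{i}{2\sqrt{\xi}}=\frac{1}{2|\xi|^{1/2}}$, which diverges as $\xi\to0^-$. Each factor of the product is positive and satisfies
\[
\frac{1-\l_j/\xi}{\sqrt{(1-a_j/\xi)(1-b_j/\xi)}}\longrightarrow\frac{\l_j}{\sqrt{a_jb_j}}\in(0,\infty),\qquad \xi\to0^-,
\]
and I would show that the product converges to the finite positive number $\prod_{j}\l_j/\sqrt{a_jb_j}$ by controlling $\sum_j\bigl|\log(\l_j/\sqrt{a_jb_j})\bigr|$ through the square-root gap lengths $\sqrt{b_j}-\sqrt{a_j}$ supplied by \eqref{21nov0} with $k=1/2$. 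Consequently $R(\xi)\sim\tfrac12|\xi|^{-1/2}\prod_j\l_j/\sqrt{a_jb_j}\to+\infty$, so that $-1/R(\xi)\to0$. Moreover $R$ is itself a Herglotz function (the diagonal of the resolvent), hence $R(\xi)>0$ and increasing on $(-\infty,0)$; thus $-1/R(\xi)$ increases to $0$ from below, and in particular $m_+(\xi)+m_-(\xi)\le0$ for all $\xi<0$.

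With this in hand the conclusion is soft. Fix $\xi_0<0$. For $\xi\in(\xi_0,0)$ monotonicity gives $m_-(\xi)\ge m_-(\xi_0)$, while
\[
m_+(\xi)=-m_-(\xi)+\bigl(m_+(\xi)+m_-(\xi)\bigr)\le -m_-(\xi_0)+0,
\]
so $m_+$ is bounded above; being increasing it has a finite limit $m_+(0)$. By symmetry $m_-(0)$ is finite as well. Passing to the limit $\xi\to0^-$ in \eqref{18may1} and using $R(\xi)\to+\infty$ yields $m_+(0)+m_-(0)=\lim_{\xi\to0^-}\bigl(-1/R(\xi)\bigr)=0$, i.e.\ $m_+(0)=-m_-(0)$, as claimed.

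The main obstacle is the single analytic fact that $R(\xi)\to+\infty$, and within it the convergence of the infinite product in \eqref{eq:multRep} as $\xi\to0^-$ to a finite nonzero value. The delicate regime is when the gaps $(a_j,b_j)$ accumulate at the spectral edge $0$, where the crude estimate $|\log(\l_j/\sqrt{a_jb_j})|\lesssim(\sqrt{b_j}-\sqrt{a_j})/\sqrt{a_j}$ is not immediately summable from \eqref{21nov0} alone; here one must exploit the precise $\tfrac12$-gap length condition (or, equivalently, the exponential representation \eqref{eq:intRepGreen} with density $\ff$ supported on the gaps and equal to $\pm\tfrac12$ there) to guarantee that the product does not decay faster than the factor $|\xi|^{1/2}$ gained from the prefactor. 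Everything else---the existence of the monotone limits and the reflection identity---then follows formally from Herglotz monotonicity and the relation \eqref{18may1}.
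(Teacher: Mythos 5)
Your overall architecture (reduce everything to showing $R(\x)\to+\infty$ as $\x\to 0^-$, then use monotonicity of the Herglotz functions $m_\pm$ on $\bbR_-$ together with \eqref{18may1}) matches the paper's, and that soft part of your argument is fine. The genuine gap is in the one hard step: your claim that the infinite product in \eqref{eq:multRep} converges, as $\x\to 0^-$, to the \emph{finite positive} number $\prod_j \l_j/\sqrt{a_jb_j}$ is false in general, and it is not merely that your estimate is crude. Each factor obeys $\sqrt{a_j/b_j}\le \l_j/\sqrt{a_jb_j}\le\sqrt{b_j/a_j}$, and when the gaps accumulate geometrically at the spectral edge (e.g. $a_j=4^{-j}$, $b_j=2\cdot 4^{-j}$, which is compatible with homogeneity, (PW), (DCT) and the $\tfrac12$-gap length condition $\sum_j(b_j-a_j)<\infty$), the divisor $\l_j=a_j$ for all $j$ gives $\prod_j\sqrt{a_j/b_j}=\prod_j 2^{-1/2}=0$. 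So for such $V\in\cV(E)$ the product genuinely tends to $0$, your asymptotic $R(\x)\sim\tfrac12|\x|^{-1/2}\prod_j\l_j/\sqrt{a_jb_j}$ is wrong, and proving $R(\x)\to\infty$ would require comparing two vanishing rates — showing the product decays slower than $|\x|^{1/2}$ — which no amount of massaging of \eqref{21nov0} alone will deliver. A telltale sign is that your argument never uses (DCT), whereas this is precisely where it is needed.

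The paper closes this step by a qualitatively different, soft argument: if $R(0^-)<\infty$, then the Herglotz function $w(\l)=-1/(\l R(\l))$, which has the interlacing structure forced by \eqref{eq:multRep} and satisfies $\Re w(\x+i0)=0$ a.e.\ on $E$ by the reflectionless property \eqref{15jan2}, would have a point mass of its Nevanlinna measure at $0\in E$; but by Poltoratski--Remling \cite{PoRem09}, under (DCT) the singular part of the measure of a reflectionless Herglotz function cannot charge $E$. This contradiction yields $R(\x)\to\infty$ uniformly over all divisors, with no product estimates at all. You should replace your product computation by this (or an equivalent) argument; as it stands, the central analytic claim of your proof fails for admissible $E$ and $D$.
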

\begin{proof}
	First, we note that $\lim_{\lambda\to 0}R(\lambda)=\infty$. Indeed, if for
	$$
	R(\lambda)=\frac i{2\sqrt{{\lambda}}}\prod_{j\ge 1}\frac{1-\l_j/\l}
	{\sqrt{(1-a_j/\l)(1-b_j/\l)}}
	$$
	we have $\lim_{\lambda\to 0}R(\lambda)<\infty$, then
	$$
	\lim_{\l\to 0}\frac{\l}{\l R(\l)} >0.
	$$
	By the product representation \eqref{eq:multRep}, the function 	$w(\l)=-\frac 1{\l R(\l)}$ has the interlacing property and is of Nevanlinna class.  Hence, the Nevanlinna measure $\dd\sigma$ corresponding to the $w$ has a mass point at the origin. But, since $R$ is reflectionless (see \eqref{15jan2}),
	$$
	\Re w(\x+i0)=0,\quad\text{for a.e. } \l\in E.
	$$ 
	On the other hand, if (DCT) holds, the singular part of $\dd\sigma$ can not be supported on $E$ \cite[Theorem 1]{PoRem09}. Consequently, it must be the case that $\lim_{\lambda\to 0}R(\lambda)=\infty$. 
	Since $m_+$ and $m_-$ are Nevanlinna class functions with Nevanlinna measures supported on $\bbR_+$, they are increasing functions on $\bbR_-$. Relation \eqref{18may1} then concludes the proof. 
\end{proof}
The following theorem was shown by Sodin and Yuditskii \cite{SoYud95}:
\begin{theorem}
The spaces $\cV(E)$ and $\cD(E)$ are homeomorphic. 
\end{theorem}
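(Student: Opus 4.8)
The plan is to realize the correspondence explicitly at the level of Weyl--Titchmarsh functions and then to use compactness of $\cD(E)$ to promote a continuous bijection to a homeomorphism. In one direction, send a potential $V\in\cV(E)$ to its \emph{Dirichlet divisor}: the positions $\l_j\in[a_j,b_j]$ are the zeros of the numerator in the product representation \eqref{eq:multRep} of the resolvent $R(\l)$, and the sign $\e_j=\pm1$ records whether the pole of $R$ at $\l_j$ is carried by $m_+$ or by $m_-$. When $\l_j$ hits a band edge the corresponding residue vanishes and the two sign choices coincide, which is exactly the endpoint identification built into $\cD(E)$; thus this Dirichlet-divisor map takes values in $\cD(E)$.

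First I would establish injectivity by showing that $E$ together with the divisor determines $m_+$ and $m_-$ outright. Relation \eqref{18may1} gives $m_++m_-=-1/R$, and $R$ is fixed by the $\l_j$ via \eqref{eq:multRep}; the reflectionless relation \eqref{15jan1} forces $m_+=-\overline{m_-}$ on $E$, so the difference $m_+-m_-$ is real on $E$ and has, in the gaps, exactly the poles prescribed by the signs $\e_j$. These constraints determine the two Herglotz functions uniquely, and a Borg--Marchenko uniqueness argument then recovers $V$ on $[0,\infty)$ from $m_+$ and on $(-\infty,0]$ from $m_-$.

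Surjectivity is the heart of the matter and I expect it to be the main obstacle. Given $D\in\cD(E)$ one builds $R$ from \eqref{eq:multRep} and splits $-1/R$ into reflectionless Herglotz summands $m_\pm$ whose gap poles are distributed according to the $\e_j$; one must then certify that these $m_\pm$ really are the Weyl functions of some $V\in\cV(E)$. This means verifying the normalization \eqref{eq:asympmplus}, the convergence of the trace sum $\sum_j(a_j+b_j-2\l_j)$ to a finite $V(0)$ (where ($1/2$-GLC), i.e. $\sum_j(b_j-a_j)<\infty$, is used), and that $\sigma(L_V)=E$ with no spurious spectrum in the gaps. The decisive input is (DCT): exactly as in the proof of Lemma \ref{lem:limitMfunction}, it prevents the singular parts of the reconstructed Nevanlinna measures from charging $E$ \cite[Theorem 1]{PoRem09}, which is what makes the reflectionless datum consistent and forces distinct divisors to produce distinct, well-defined potentials.

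It remains to treat continuity and to close the topological loop. Convergence in $\cD(E)$ is coordinatewise, and a coordinatewise-convergent sequence of divisors yields locally uniformly convergent resolvents, hence locally uniformly convergent $m_\pm$ off $E$ and thus convergent potentials; so the reconstruction map $\cD(E)\to\cV(E)$ is continuous. Since $\cD(E)$ is a countable product of circles it is compact, while $\cV(E)$, topologized by locally uniform convergence (equivalently by locally uniform convergence of the $m$-functions off the spectrum), is Hausdorff. A continuous bijection from a compact space onto a Hausdorff space is automatically a homeomorphism, which yields the claim. The structural hypotheses (R), ($1/2$-GLC), (PW) and (DCT) on $\cS_+$ all enter in the surjectivity step, and it is precisely this inverse-spectral content that the functional-model framework developed in the sequel is designed to organize.
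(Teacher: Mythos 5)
First, a point of reference: the paper does not prove this statement at all --- it is quoted from Sodin and Yuditskii \cite{SoYud95}, and the surrounding material (Lemma \ref{lem:Abel1}, the generalized Abel map, Theorem \ref{thm:DY}, Theorem \ref{t:chi0diff}) then makes the homeomorphism explicit. Your architecture --- the Dirichlet divisor read off from the poles of $R$ together with the sign recording whether $m_+$ or $m_-$ carries each pole, uniqueness from the reflectionless relation \eqref{15jan1} plus \cite[Theorem 1]{PoRem09}, and the compact-to-Hausdorff trick to upgrade a continuous bijection --- is the same architecture as in that reference, and your forward map and injectivity argument are essentially Lemma \ref{lem:Abel1} combined with Borg--Marchenko. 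That part is sound.

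The genuine gap is exactly where you locate it: surjectivity. Writing down the Herglotz function $m_+$ of \eqref{eq:intmplus} from a divisor $D$ is easy; showing that it is the half-line Weyl function of an actual Schr\"odinger operator whose spectrum is exactly $E$ and which is reflectionless there is the entire inverse-spectral content of the theorem, and no amount of checking asymptotics or convergence of the trace sum substitutes for an existence argument (Gelfand--Levitan, de Branges, or the functional-model construction of Theorem \ref{thm:DY} and Section 4.2). Deferring this to ``the framework developed in the sequel'' is circular in the context of this paper, since that framework takes $V\in\cV(E)$ as given; the paper escapes the circle only by importing the theorem from \cite{SoYud95}. A second, smaller gap: locally uniform convergence of $m_\pm$ off $E$ does not by itself give convergence of the potentials, because $V$ is recovered from the asymptotics of $m_+$ at $-\infty$ (equivalently from the trace formula); to pass from convergence of divisors to convergence of $V$ one needs the uniform bound $\sup_{D}\sum_{j}\sigma_j<\infty$ of \eqref{eq:intmplus3} together with the continuity and $\eta$-differentiability of $\chi_0$ established in Theorem \ref{t:chi0diff}, neither of which is automatic from pointwise control of the resolvents.
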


A similar result was obtained by Sodin and Yuditskii for Jacobi matrices \cite{SoYud97} by associating to each reflectionless Jacobi matrix $J$ a Hardy space of character automorphic functions. This description was carried over to continuous Schr\"odinger operators by Damanik and Yuditskii \cite{DY16}.  We will present and use their result in the following in a way that is convenient for our purpose. 

First, we can explicitly describe the homeomorphism $\cV(E) \to \cD(E)$ mentioned above:
\begin{lemma}\label{lem:Abel1}
	Let $V\in\cV(E)$, and denote by $R(\l)$ and $m_+$ the corresponding resolvent and Weyl-Titchmarsh functions, respectively. Then there exists a divisor $D\in\cD(E)$ such that 
	\begin{align}\label{eq:intmplus}
		m_+(\l)=-\frac{1}{2R(\l)}+\sum_{\{ j: \l_j\in(a_j,b_j)\}}\frac{\l\s_j\e_j}{2(\l_j-\l)}+m_+(0),
	\end{align}
	where 
	\begin{align}\label{eq:intmplus2}
		\s_k=2\frac{\sqrt{(\l_k-{a_k})(b_k-\l_k)}}{\sqrt{\l_k}}\prod_{j\not=k}\frac{\sqrt{(1-a_j/\l_k)(1-b_j/\l_k)}}
		{1-\l_j/\l_k}
		>0.
	\end{align}
	Moreover, 
	\begin{align}\label{eq:intmplus3}
		\sum_{j=1}^\infty{\s_j}<\infty \quad\text{and}\quad m_+(0)=\frac 1 2\sum_{j=1}^\infty\sigma_j\e_j.
	\end{align}
\end{lemma}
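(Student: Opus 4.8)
The plan is to work with the single-valued function
\[
g(\l):=m_+(\l)+\frac{1}{2R(\l)},
\]
which by the resolvent identity \eqref{18may1} equals $\tfrac12\big(m_+(\l)-m_-(\l)\big)$. First I would record the boundary behaviour of $g$ on $E$: writing $m_\pm(\x+i0)=u_\pm+iv_\pm$ with $v_\pm\ge 0$, the reflectionless property \eqref{15jan1} gives $m_+(\x+i0)=-\overline{m_-(\x+i0)}$, hence $u_+=-u_-$ and $v_+=v_-$ for a.e. $\x\in E$; consequently $g(\x+i0)=u_+$ is real a.e. on $E$. Next I would locate the poles of $g$. Since $R$ has a simple zero at each interior Dirichlet point $\l_j\in(a_j,b_j)$ by the product formula \eqref{eq:multRep}, the combination $m_++m_-=-1/R$ has a simple pole there; because $\sigma(L_V)=E$ contains no point of the gaps, exactly one of $m_+,m_-$ carries this pole, and I define $\e_j=+1$ (resp. $-1$) accordingly. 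A direct residue computation from \eqref{eq:multRep} shows that the residue of $-1/(2R)$ at $\l_j$ equals $\tfrac{\l_j\s_j}{2}$ with $\s_j$ as in \eqref{eq:intmplus2}, and matching the two cases gives that the coefficient of $(\l_j-\l)^{-1}$ in $g$ is exactly $\tfrac{\l_j\s_j\e_j}{2}$.

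With the residues identified, set
\[
\Psi(\l):=\sum_{\{j:\l_j\in(a_j,b_j)\}}\frac{\l\,\s_j\e_j}{2(\l_j-\l)},
\]
so that $h:=g-\Psi$ is single-valued, analytic in $\cS_+=\CE$ (the singularities at the $\l_j$ are removable), real on every interval of $\bbR\setminus E$, and has real boundary values a.e. on $E$. Schwarz reflection across $\bbR_-$ then yields $h(\bar\l)=\overline{h(\l)}$ throughout the connected domain $\CE$; comparing the boundary values from the two sides of $E$ shows they agree a.e., so $h$ continues analytically across $E$ to an entire function. From the asymptotics \eqref{eq:asympmplus} and \eqref{eq:asympGreen} the $i\sqrt\l$ terms of $m_+$ and $1/(2R)$ cancel, so $g$ stays bounded at infinity while $\Psi$ tends to a finite limit; hence $h$ is bounded and, by Liouville, constant. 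Evaluating at $\l=0$, where $R(0)=\infty$ by Lemma~\ref{lem:limitMfunction} and $\Psi(0)=0$, identifies this constant as $m_+(0)$ and establishes \eqref{eq:intmplus}.

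It remains to verify the two assertions in \eqref{eq:intmplus3}. For summability I would bound the first factor of $\s_j$ by $\tfrac{b_j-a_j}{\sqrt{a_j}}\lesssim \sqrt{b_j}-\sqrt{a_j}$ and use the Widom condition \eqref{18jan4} to bound the remaining Blaschke-type product uniformly in $j$; then $\sum_j\s_j\lesssim\sum_j(\sqrt{b_j}-\sqrt{a_j})<\infty$ by ($1/2$-GLC) \eqref{21nov0}. Finally, letting $\l\to-\infty$, the expansions \eqref{eq:asympmplus}, \eqref{eq:asympGreen} give $g(\l)\to 0$, while termwise $\Psi(\l)\to-\tfrac12\sum_j\s_j\e_j$; since $h\equiv m_+(0)$, this forces $m_+(0)=\tfrac12\sum_j\s_j\e_j$.

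The main obstacle is the claim that $h$ extends to a bounded entire function --- that is, controlling $g$ near the band edges $a_j,b_j$, where both $m_\pm$ and $1/(2R)$ have square-root singularities that must cancel in $g=\tfrac12(m_+-m_-)$, and where the edges accumulate. Making the Liouville step rigorous is exactly where the Hardy-space/Widom structure enters: it is cleanest to argue that $(g-m_+(0))/\l$ lies in the Smirnov class $\cN_+(\cS_+)$ with real boundary data on $E$, and to invoke the Direct Cauchy Theorem so that the $E$-part of its Cauchy representation drops out, leaving precisely the sum of pole contributions $\Psi(\l)/\l$. The summability estimate above, which also legitimizes the termwise limit defining $\Psi(-\infty)$, is the other technical point requiring the ($1/2$-GLC) and Widom hypotheses.
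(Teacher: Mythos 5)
Your overall strategy is the same as the paper's: you split $m_+$ into the ``even'' part $-1/(2R)=\tfrac12(m_++m_-)$ and the ``odd'' part $g=\tfrac12(m_+-m_-)$, use the reflectionless property to make $g$ real a.e.\ on $E$, locate its poles at the Dirichlet points, and normalize at $0$ via Lemma~\ref{lem:limitMfunction}. Two of your variations are legitimate and arguably cleaner than the paper's: you rule out a simultaneous pole of $m_+$ and $m_-$ at $\l_j$ by observing that it would force $\l_j\in\sigma(L_V)=E$, where the paper instead reads this off the product representation \eqref{eq:R11} of the auxiliary function $R_{11}$; and you are explicit that the ``Liouville'' step is really a statement about the (signed) Herglotz measure of $g$ having no mass on $E$, which is exactly where DCT and the Poltoratski--Remling theorem enter (the same mechanism the paper uses inside Lemma~\ref{lem:limitMfunction}). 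One logical ordering issue: you need $\Psi$ to converge before you can subtract it, but this is harmless --- the finiteness of $m_\pm(0)$ already gives $\sum_j \s_j/(1+\l_j)<\infty$, which suffices to define $\Psi$ locally uniformly.

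The genuine gap is in your proof of $\sum_j\s_j<\infty$. You bound the first factor of \eqref{eq:intmplus2} by $(b_j-a_j)/\sqrt{a_j}\lesssim\sqrt{b_j}-\sqrt{a_j}$, which already requires $b_j/a_j$ to be bounded (not guaranteed if gaps accumulate at a finite point of $E$), and, more seriously, you assert that the infinite product $\prod_{k\neq j}$ in \eqref{eq:intmplus2} is bounded uniformly in $j$ ``by the Widom condition.'' The Widom condition \eqref{18jan4} controls sums of Green's function values at the critical points for a \emph{fixed} pole and gives convergence of each individual product; it does not give a bound uniform over all gaps $j$ and all divisor positions $\{\l_k\}$, and obtaining such uniform control is essentially as hard as the inequality you are trying to prove (it is the content of the last lemma of Section~4.5, which handles $\sup_{D}\sum_j\s_j^D\l_j^k$ via the auxiliary function $Z(\mu)$ and its residues $\rho_j^{\pm}$, using the $\tfrac12$-gap-length condition \eqref{21nov0} in the form $\sum_j(\sqrt{b_j}-\sqrt{a_j})<\infty$). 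A cleaner route within your own framework: since $\s_j$ depends only on the $\l_j$'s and not on the $\e_j$'s, apply your decomposition to the all-plus divisor $D^+=\{(\l_j,+1)\}$ (which corresponds to some $V^+\in\cV(E)$ by the Sodin--Yuditskii homeomorphism quoted just before the lemma); then $\Psi^+(\l)=\sum_j\tfrac{\l\s_j}{2(\l_j-\l)}$ has terms of constant sign for $\l<0$, so monotone convergence and the finiteness of $m_+^{D^+}(0)$ give $\tfrac12\sum_j\s_j=m_+^{D^+}(0)<\infty$ with no product estimates at all; dominated convergence then yields $m_+(0)=\tfrac12\sum_j\s_j\e_j$ for the original divisor.
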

\begin{proof}
	By the reflectionless property, the absolutely continuous part of the measure corresponding to $-R(\l)^{-1}$ should be equally divided between $m_+$ and $m_-$. The $\l_j$'s are given by \eqref{eq:multRep}.  
	Set 
	$$
		R_{11}(\l):=\frac{(m_+(\l)-m_+(0))(m_-(\l)-m_-(0))}{m_+(\l)+m_-(\l)}.
	$$
	Again by the reflectionless condition, we encounter as in \eqref{eq:intRepGreen} that there exist $ \l^{(1)}_j\in[a_j,b_j]$ such that 
	\begin{align}\label{eq:R11}
			R_{11}(\l)=\frac{i\sqrt{\l}}{2}\prod_{j=1}^{\infty}\frac{\l- \l^{(1)}_j}{\sqrt{(\l-a_j)(\l-b_j)}}.
	\end{align}
	Therefore, provided $\l_j \in (a_j,b_j)$, it is not possible that $m_-$ and $m_+$ have a pole at $\l_j$ simultaneously.  The $\e_j$ are chosen in order to add or cancel the pole at $\l_j$. By the previous lemma it is possible to use $0$ as a normalization point, which concludes the proof of \eqref{eq:intmplus} and \eqref{eq:intmplus2}. Finally, \eqref{eq:intmplus3} follows by \eqref{eq:asympGreen} and \eqref{eq:asympmplus}.
\end{proof}

Both of these spaces are, in turn, homeomorphic to the Fuchsian dual $\Gamma^*$ by way of the generalized Abel map, which we now describe.  

We defined the functions $\Phi(\lambda, \lambda_0)$ in \eqref{18jan10}.  This function is related to the potential theoretic Green's function of $\cS_+$ by \eqref{18jan10}.
Let $E_k=[b_k,\infty)\cap E$ and let $\g_k\in\G$ correspond to the contour starting at $-1$ and containing the set $E_k$. Moreover, let $\omega(\l,I)$ denote the harmonic measure on $\cS_+$ of the set $I$ evaluated at the point $\l$. By \eqref{18jan10} we see that the character $\nu_{\l_0}$ of $\Phi(\lambda, \lambda_0)$ can be given by means of the harmonic measure, i.e.,
$$
\Phi(\g_k(\l), \l_0) = e^{2\pi i\omega(\l_0,E_k)}\Phi(\l, \l_0).
$$

The following generalized Abel map was introduced in \cite{SoYud94}.
\begin{definition}\label{d:abelmap}
	The Abel map $\cA:\cD(E)\to \G^*$ is defined by
	\begin{align*}
	\cA(D,\g_k)=\frac 1 2\sum_{j\ge 1}\omega(\l_j, E_k)\e_j \mod 1
	\end{align*}
	This map is a homeomorphism \cite{SoYud95}.
\end{definition}

It will be convenient to define the Abel map as being shifted by a fixed character corresponding to the divisor $D_c=\{(c_j,-1)\}_{j\geq 1}$. That is, we set
$$
\a(D)=\cA(D)-\cA(D_c).
$$
We can thus put in correspondence potentials $V \in \cV(E)$ and characters $\alpha \in \Gamma^*$; we will later describe explicitly such a map (and various other important spectral quantities).

\subsection{Generalized eigenfunctions and a Wronskian identity}

The symmetric Martin's function $M(\l)$ related to infinity is a positive, symmetric (i.e., $M(\l)=M(\overline{\l})$), harmonic function in $\cS_+$ which vanishes on $E$. It is unique under the normalization $M(\l)=\sqrt{\l}+o(1/\sqrt{\l})$ as $\l\to -\infty$. $M$ has precisely one critical point $c_j$ in each gap $(a_j,b_j)$ of $E$. 

$M(\l)$ can also be represented in terms of a conformal mapping. Fix $h_j,\eta_j>0$ with $\eta_j<\eta_{j+1}$. To this data we associate the comb 
\begin{align}\label{def:MartinComb}
	\Pi_+=\{\vt: \ \Re\vt>0,\ \Im \vt >0\}\setminus\cup_{j\geq 1}\{\vt: \ \vt=\pi\eta_j+ih, h \in (0, h_j]\}.
\end{align}
By the Riemann mapping theorem, there exists a conformal map $\Theta:\bbC_+\to \Pi_+$, which is unique under the normalization that $\Theta(0)=0$ and $\Theta(\l)=\sqrt{\l}+o(1/\sqrt{\l})$ as $\l\to -\infty.$ Noting that $\Theta$ can be continuously extended to the boundary, we set $E=\Theta^{-1}(\bbR_+)$. Moreover, $\Theta$ can be extended to $\cS_+$ as an additive automorphic function
\begin{equation}\label{5amay3}
\Theta(\g_j(\l))=\Theta(\l)+2\pi\eta_j.
\end{equation}
With this definition, one has $\Im\Theta(\l)=M(\l)$. The tops of the needles $\eta_j+ih_j$ correspond to the critical values $M(c_j)$.  Conversely, for every given set $E$ there exists a corresponding conformal map $\Theta$ with the above properties for some comb $\Pi_+$, cf. e.g. \cite{Yud11}.  In fact, for $E$ as in \eqref{def:setE}, $\Theta'(\l)$ can be given explicitly. Specifically, 
\begin{align}\label{eq:thetaPrime}
\Theta'(\l)=\frac{-1}{2\sqrt{\l}}\prod_{j=1}^{\infty}\frac{1-c_j/\l}{\sqrt{(1-a_j/\l)(1-b_j/\l)}}.
\end{align}

We likewise define the Widom function 
$$
\cW(\l)=\prod_{j\ge 1}\Phi(\l, c_j);
$$
such that $\cW_\cR(p) = \cW(\l_{\cR}(p))$; it is the inner part of $\Theta '$.  With this notation, we can now define our generalized functions $e_\a$ on $\cS_+$ and examine some of their properties:
\begin{theorem} Let $\fj$ be the character generated by $\sqrt{\l}$ in $\cS_+$ and $\a=\a(D)$. We define the character automorphic functions 
	$$
	e_\a(\l)= e(\l, D) = \sqrt{\prod_{j\geq 1}\frac{(1-\l_j/\l)\Phi(\l,c_j)}{(1-c_j/\l)\Phi(\l,\l_j)}}\prod_{j\geq1}\Phi(\l,\l_j)^{\frac{1+\e_j} 2}.
	$$
	and
	\begin{align}\label{eq:dualBasis}
			\tilde e_\a(\l)=e_{\a(\tau D)}(\l),\quad \tau(D)=\{(\l_j,-\e_j)\}_{j=1}^\infty.
	\end{align}
	Then 
	\begin{equation}\label{15may3}
	\cW(\x)\overline{e_\a(\x)}=\tilde e_\a(\x)\quad \text{a.e. } \x \in E.
	\end{equation}	
	Moreover, for every $D$ there exists $\a\in\G^*$ such that we have
	\begin{equation}\label{15may7}
	m_+(\l,D)-m_+(0,D)=i\sqrt{\l} \frac{e_{\a+\fj}(\l)}{e_{\a}(\l)},
	\end{equation}
	and the following Wronskian identity holds for all $\a\in\G^*$:
	\begin{align}\label{eq:WronskId}
		e_{\a+\fj}(\l)\tilde e_\a(\l)+e_\a(\l)\tilde e_{\a+\fj}(\l)=\frac{\cW(\l)}{\sqrt{\l}\Theta'(\l)}.
	\end{align}
\end{theorem}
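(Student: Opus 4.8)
The plan is to establish the three identities in the order \eqref{15may3}, \eqref{15may7}, \eqref{eq:WronskId}: the pseudocontinuation identity \eqref{15may3} is the structural foundation, \eqref{15may7} is the analytic heart of the matter, and the Wronskian identity \eqref{eq:WronskId} then drops out of a short symmetrization once the $m$-function formula is in hand. Throughout I would write $e_\a=S\cdot\prod_{j}\Phi(\cdot,\l_j)^{(1+\e_j)/2}$, where $S=\sqrt{\prod_j\frac{(1-\l_j/\l)\Phi(\l,c_j)}{(1-c_j/\l)\Phi(\l,\l_j)}}$ is the $\e_j$-independent radical factor, so that by definition $\tilde e_\a=S\cdot\prod_j\Phi(\cdot,\l_j)^{(1-\e_j)/2}$; convergence of all Blaschke products is furnished by (PW) and ($1/2$-GLC).

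For \eqref{15may3} the only boundary input needed is that, by regularity (R), $G(\cdot,\l_0)\equiv 0$ on $E$, whence $|\Phi(\x,\l_0)|=1$ and $\overline{\Phi(\x,\l_0)}=\Phi(\x,\l_0)^{-1}$ for $\x\in E$ by \eqref{18jan10}. Conjugating $e_\a$ on $E$ therefore inverts every $\Phi$-factor while fixing the ratios $(1-\l_j/\x)/(1-c_j/\x)>0$ (both factors share the same sign off the gap $(a_j,b_j)$), which gives $\overline{S(\x)}=S(\x)\prod_j\Phi(\x,\l_j)/\Phi(\x,c_j)$. Multiplying $\overline{e_\a}$ by $\cW=\prod_j\Phi(\cdot,c_j)$ then telescopes the $\Phi(\cdot,c_j)$ factors and converts the exponent $(1+\e_j)/2$ into $(1-\e_j)/2$, producing exactly $\tilde e_\a$. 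This is a direct rearrangement with no analytic subtlety beyond $|\Phi|=1$ on $E$.

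For \eqref{15may7} I would set $\hat m(\l):=i\sqrt\l\,e_{\a+\fj}(\l)/e_\a(\l)$ with $\a=\a(D)$ and identify $\hat m=m_+(\cdot,D)-m_+(0,D)$ by uniqueness. Since $\sqrt\l$ carries the order-two character $\fj$, the numerator $\sqrt\l\,e_{\a+\fj}$ has character $\a$ and $\hat m$ is single-valued; its only poles are the $\l_j$ with $\e_j=+1$, matching the poles prescribed by Lemma \ref{lem:Abel1}. Feeding \eqref{15may3} into the boundary values yields $\overline{\hat m(\x+i0)}=-i\sqrt\x\,\tilde e_{\a+\fj}/\tilde e_\a$ on $E$, which is the reflectionless relation \eqref{15jan1} paired against the matching expression for $m_-$; together with the normalization $\hat m\sim i\sqrt\l$, $\hat m(0)=0$ forced by \eqref{eq:asympmplus} and Lemma \ref{lem:limitMfunction}, this pins down the spectral data of $\hat m$, and the Sodin--Yuditskii homeomorphism $\cV(E)\cong\cD(E)$ forces $\hat m=m_+-m_+(0)$. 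I expect this to be the main obstacle: one must confirm that $\hat m$ is genuinely Herglotz with exactly the Dirichlet divisor of Lemma \ref{lem:Abel1} -- no spurious zeros or poles at the $\l_j$ with $\e_j=-1$, and the correct residue signs -- rather than merely reflectionless, so that the uniqueness statement is legitimately applicable.

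For \eqref{eq:WronskId} I would symmetrize. By Remark \ref{rem30jan1} one has $m_-^\a=m_+^{\a_\cW-\a}$ and, from \eqref{15may3}, $\tilde e_\a=e_{\a_\cW-\a}$; applying \eqref{15may7} at the character $\a_\cW-\a$ gives $m_-(\l)-m_-(0)=i\sqrt\l\,\tilde e_{\a+\fj}/\tilde e_\a$ (using $2\fj=0$). Adding this to \eqref{15may7} and invoking $-1/R=m_++m_-$ from \eqref{18may1} together with $m_+(0)=-m_-(0)$ from Lemma \ref{lem:limitMfunction} gives
\begin{equation*}
e_{\a+\fj}\tilde e_\a+e_\a\tilde e_{\a+\fj}=-\frac{e_\a\tilde e_\a}{i\sqrt\l\,R}.
\end{equation*}
Here $e_\a\tilde e_\a=S^2\prod_j\Phi(\cdot,\l_j)=\cW\prod_j\frac{1-\l_j/\l}{1-c_j/\l}$, since the single factor $\prod_j\Phi(\cdot,\l_j)$ cancels the radical's $\Phi(\cdot,\l_j)$; comparing \eqref{eq:multRep} with \eqref{eq:thetaPrime} gives $\prod_j\frac{1-\l_j/\l}{1-c_j/\l}=iR/\Theta'$, so substitution collapses the right-hand side to $\cW/(\sqrt\l\,\Theta')$ up to an overall sign. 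Tracking the branch of $\sqrt\l$ across $E$ and the orientation conventions in \eqref{18may1} and \eqref{eq:thetaPrime} fixes this sign to give exactly \eqref{eq:WronskId}; this last point is routine bookkeeping.
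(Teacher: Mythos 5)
Your treatment of \eqref{15may3} (direct conjugation of the canonical product using $|\Phi(\x,\l_0)|=1$ on $E$) and your derivation of \eqref{eq:WronskId} (adding the $m_\pm$ representations, using $-1/R=m_++m_-$ and the product formulas \eqref{eq:multRep}, \eqref{eq:thetaPrime} to collapse $e_\a\tilde e_\a=\cW\prod_j\frac{1-\l_j/\l}{1-c_j/\l}$) are both sound and essentially coincide with what the paper does. The problem is \eqref{15may7}, where your argument has a genuine gap that you yourself flag but do not close. You define $\hat m=i\sqrt\l\,e_{\a+\fj}/e_\a$ and want to conclude $\hat m=m_+-m_+(0)$ by matching spectral data and invoking the Sodin--Yuditskii homeomorphism $\cV(E)\cong\cD(E)$. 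But that uniqueness statement classifies \emph{Herglotz} functions that are reflectionless on $E$ with a prescribed Dirichlet divisor; it says nothing about an arbitrary meromorphic function sharing the divisor, the boundary relation, and the asymptotics. Nothing in the canonical product \eqref{19ja1} makes it evident that $\hat m+m_+(0)$ has positive imaginary part in $\bbC_+$, and checking "no spurious zeros, correct residue signs" is exactly as hard as the positivity you are trying to avoid. As written, the uniqueness step is not legitimately applicable, and the proof of \eqref{15may7} does not go through.

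The paper closes this by running the argument in the opposite direction: it starts from the actual $m_+(\cdot,D)$, which is already known to be Herglotz, reflectionless, and to have the divisor of Lemma \ref{lem:Abel1}. The boundary identity $R_{11}(\x)/R_{00}(\x)=|m_+(\x)-m_+(0)|^2$ from \eqref{eq:R11} pins down the outer part of $m_+-m_+(0)$, and the Volberg--Yuditskii theorem (\cite[Theorem 4.1]{KotaniLast}) guarantees that $m_+\circ\bl$ is of bounded characteristic with inner part a ratio of Blaschke products, so the inner part is forced to be $\prod\Phi(\cdot,\l_j)^{\e_j}$ distributed between numerator and denominator according to the divisor. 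This yields $m_+-m_+(0)=C(\a)\,i\sqrt\l\,e_{\a+\fj}/e_\a$ with $|C(\a)|=1$, and $C(\a)=1$ follows from $\lim_{\l\to-\infty}e_\a\tilde e_\a=1$ together with $C(\a)^2=\lim m_+m_-/\l=1$ via \eqref{eq:asympmplus}. If you want to keep your direction of argument, you would need to supply an independent proof that $i\sqrt\l\,e_{\a+\fj}/e_\a+m_+(0)$ is Herglotz; otherwise you should adopt the factorization route, whose essential external input is precisely \cite[Theorem 4.1]{KotaniLast}.
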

\begin{proof}
Note that by the definition of the Abel map, $e_\alpha$ indeed has character $\alpha$; compare, e.g., \cite[Section 10.2]{SoYud97}.

	Let $m_\pm(\l)=m_\pm(\l,D)$ and $\a(\g)=\cA(D,\g)-\cA(D_c,\g)$.
	By \eqref{eq:R11}, for $\x \in E$ we have
	$$
	\frac{ R_{11}(\x)}{R_{00}(\x)}=|m_+(\x)-m_+(0)|^2=\x\prod_{j=1}^{\infty}\Big(\frac{\x-\tilde{\l}_j}{\x-\l_j}\Big)^2.
	$$
	By \cite[Theorem 4.1]{KotaniLast} $m_+(\bl(z))$ is a function of bounded characteristic such that its inner part represents a ratio of Blaschke products. Therefore
	$$
	m_+(\l)-m_+(0)=C(\a)i\sqrt{\l}\frac{e_{\a+\fj}(\l)}{e_\a(\l)}.
	$$
	Then
	\begin{equation}\label{15may10}
	-m_-(\x)-m_+(0)=\overline{m_+(\x)-m_+(0)}=-C(\a)i\sqrt{\x}\frac{\tilde e_{\a+\fj}(\x)}{\tilde e_\a(\x)}, \quad \text{a.e. } \x \in E.
	\end{equation}
	Since
	$$
	\lim_{\l \to -\infty}e_\a(\l)\tilde e_\a(\l)=1,
	$$
	by \eqref{eq:asympmplus} we have
	$$
	C(\a)^2=\lim_{\l\to-\infty} \frac{m_+(\l) m_-(\l)}{\l}=1
	$$
	and \eqref{15may7} is proved. It remains to prove \eqref{eq:WronskId}. We have
	$$
	R^\a(\l)=i\frac{e_\a(\l)\tilde e_\a(\l)\Theta'(\l)}{\cW(\l)}.
	$$
	Using \eqref{15may10} we obtain that 
	\begin{align*}
		R^\a(\l)=\frac{-1}{m^\a_+(\l)-m_+^\a(0)+m^\a_-(\l)+m_+^\a(0)}=\frac{i{e_\a(\l)\tilde e_\a(\l)}}{\sqrt{\l}
			({e_{\a+\fj}(\l)\tilde e_\a(\l)}+{e_\a(\l)\tilde e_{\a+\fj}(\l)})},
	\end{align*}
	which concludes the proof.
\end{proof}

\section{Functional models and the space $H^2(\a)$}

Let $\G'$ denote the commutator subgroup of $\G$. It is generated by the commutators
$$
[\g_1,\g_2]=\g_1\g_2\g_1^{-1}\g_2^{-1},\quad \g_1,\g_2\in\G.
$$
and can be given by
\begin{align}
	\G'=\bigcap_{\a\in\G^*}\ker \a.
\end{align}
Elements of $\G/\G'$ are denoted by $\un\g$. We have
\begin{equation}\label{5may2}
\un\g=\{\g_1^{n_1}\g_2^{n_2}\dots\g_m^{n_m}\oc\g:\ \oc\g\in\G'\}\simeq 
(n_1,n_2,\dots, n_m,0,,0,\dots)\in (\bbZ^\infty)_0,
\end{equation}
that is, $\G/\G'\simeq (\bbZ^\infty)_0$. 

The subgroup $\G'$ of $\G$ is the smallest normal subgroup such that the quotient $\G/\G'$ is abelian. By the duality theorem \cite[Theorem 24.2]{HeRo79} $(\G^*)^*=\G/\G'$ and we have the Fourier transforms
\begin{equation}\label{5may3}
f(\a)=\sum\hat f_{\un\g}e^{2\pi\a(\g)},\quad f_{\un\g}=\int_{\G^*}f(\a)e^{-2\pi i\a(\g)}d\a,
\end{equation}
where $\{f_{\un\g}\}\in\ell^2_{\G/\G'}$ and $f(\a)$ belongs to the space $L^2_{d\a}$ of square-integrable functions w.r.t. the Haar measure $d\a$ on $\G^*$; see e.g. \cite{Ru90}.

Consider the Riemann surface $\cR=\bbC_+/\G'$. $\cR$ is called the (universal) Abelian covering for the Riemann surface $\bbC_+/\G$.  Points on $\cR$ are denoted by $p$ and the projection from $\cR$ onto $\bbC_+/\G$ is denoted by $\pi$.  For $z\in\bbC_+$ we associate the point $p=p(z)$ corresponding to the orbit $\{\oc\g(z)\}_{\oc\g\in\G'}$. The group $\G/\G'$ acts on $\cR$ in the natural way: $\un\g p=\{\oc\g(\g(z))\}_{\oc\g\in\G'}$.  

\begin{remark}
We can describe $\cR$ in terms of the covering maps $\l_\cR$ and $\t_\cR$. First we fix a fundamental domain $\cF$ for the covering $\bl$. For a given set $E$ there exists a system of non-intersecting half discs $\bbD^+_j=\{z\in\bbC_+: |z-\zeta_j|<r_j\}$, $\zeta_j\in\bbR_+, r_j>0$ such that $\bbC_+$ can be mapped conformally onto $\cF_+=\{z: \Re z>0, \Im z>0\}\setminus\cup_{j\geq 1}\bbD^+_j$, with the following properties:
\begin{align*}
	\phi(0)=0,\quad \phi(\infty)=i\infty,\quad \phi((a_j,b_j))=\partial\bbD_j\cap\bbC_+.
\end{align*}
By the symmetry principle we extend $\phi$ as a conformal mapping from 
$$
\cF=\cF_+\cup\cF_-\cup i\bbR_+,\quad\text{where }\cF_-=\{-\overline{z}:\quad z\in\cF_+\}
$$
to $\bbC\setminus \bbR_+$. By extending $\phi$ with respect to the gaps $(a_j,b_j)$ we obtain $\bl$, respectively we can describe the action of $\G$. We fix $\cF$ as a fundamental domain of $\bl$. Let $\tilde\G$ denote a system of representatives of $\G/\G'$. Then 
$$
\oc\cF=\cup_{\tilde \g\in\tilde \G}\tilde\g(\cF)
$$
is a fundamental domain for $\G'$. To describe $\cR$ by means of the function $\l_\cR$ we take $(\bbZ^\infty)_0$ copies $\cS_{\un\g}$ of  $\bbC\setminus \bbR_+$ cut along the gaps $(a_j,b_j)$. We fix the zero sheet  $\cS_{\un\iota}$ corresponding to $\cF$, where $\l_\cR$ is one-to-one. Let $z\in\cF$ and $\g_j\in\G$ corresponding to the closed loop $\tilde\g_j$ that passes through $(a_j,b_j)$.  Passing from $p(z)$ to $p(\g_j(z))$ means that we pass from $\un\iota$ to $\un\g_j$. Generally, two sheets corresponding to $\un\g$ and  $\un{\g\gamma_j^{\pm 1}}$ are glued together at the gap $(a_j,b_j)$.

Similarly, we can describe $\cR$ by means of $\t_\cR$. 
The sheet $\un\iota$ is given by
$$
\Pi_{\un\iota}=\Pi_+\cup \Pi_-\cup i\bbR_+ \quad\text{where}\ \Pi_-=\{-\overline{\vt}:\ \vt\in\Pi_+\},
$$
see \eqref{def:MartinComb}.
Generally the $\un\g$-sheet represents the domain $\Pi_{\un\iota}$ shifted by
$2\pi\eta(\g)$. Two sheets enumerated corresponding to  $\un \g$ and $\un{\g\g_j}$ are glued along the cuts with the common basis $2\pi\eta(\g)+\pi\eta_j=
2\pi\eta(\g\g_j)-\pi\eta_j$.
\end{remark}

Note that all character automorphic functions, $f$,  by 
$$
f_{\cR}(p)=f\circ \bl
$$ 
lift to single-valued functions on $\cR$. 
For typographical simplicity both functions will henceforth be denoted by $f$, but we will keep using this notation for the special functions 
$e_\cR(p,\a)$, $\l_\cR(p)$, $\mu_{\cR}(p)$ and $\theta_\cR(p)$, the functions on $\cR$ corresponding to $e_\a(\l), \l, \sqrt{\l}$ and $\Theta(\l)$, respectively.

Recall the character automorphic Hardy space $H^2(\a)$ introduced in Definition \ref{d:h2a}.   On this space, the linear functional of point evaluation in $\bbC_+$ is continuous.  Thus, by the Riesz representation theorem, there exist reproducing kernels $k^\a(p,p_0)=k^\a_{p_0}(p) \in H^2(\alpha)$ satisfying
\begin{align*}
	f(p_0)=\langle f,k^\a_{p_0}\rangle \quad\text{ for all }f\in H^2(\a).
\end{align*}

\begin{proposition}
	The reproducing kernels $k^\a(p,p_0)$ can be given by
	\begin{align}\label{eq:ReproducingKernels}
		k^\a(p,p_0)=i\frac{\mu_{\cR}(p)e_\cR(p,\a+\fj)\overline{e_\cR(p_0,\a)}+e_\cR(p,\a)\overline{\mu_\cR(p_0)e_\cR(p_0,\a + \fj)}}{\l_\cR(p)-\overline{\l_\cR(p_0)}}
	\end{align}
\end{proposition}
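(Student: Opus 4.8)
The plan is to recognize the right-hand side of \eqref{eq:ReproducingKernels} as a Christoffel--Darboux (de Branges type) kernel assembled from the two ``solutions'' $e_\cR(p,\a)$ and $\mu_\cR(p)e_\cR(p,\a+\fj)$ --- the latter being, up to the constant $m_+(0)$, the numerator in the $m$-function representation \eqref{15may7} --- and to verify directly that it is the reproducing kernel. Denote by $K(p,p_0)$ the proposed expression. As always for reproducing kernels, it suffices to check two things: that $K(\cdot,p_0)\in H^2(\a)$ for each fixed $p_0$, and that $\langle f,K(\cdot,p_0)\rangle=f(p_0)$ for every $f\in H^2(\a)$.

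First I would establish membership. The character bookkeeping is immediate: since $\mu_\cR$ carries character $\fj$ and $2\fj=0_{\G^*}$, the product $\mu_\cR(p)e_\cR(p,\a+\fj)$ has character $\a$, matching $e_\cR(p,\a)$, while $\l_\cR$ is $\G/\G'$-invariant; hence $K(\cdot,p_0)$ is character automorphic with character $\a$. The only interior singularity of $K(\cdot,p_0)$ is the apparent pole at the point $p_0^*$ with $\l_\cR(p_0^*)=\overline{\l_\cR(p_0)}$; here I would use that $E$ and all the data $a_j,b_j,\l_j,c_j$ are real, so that $\l_\cR,\mu_\cR,e_\cR$ intertwine with the natural anti-holomorphic involution $p\mapsto p^*$. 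This symmetry is exactly what forces the numerator of $K$ to vanish at $p_0^*$, so the pole is removable and $K(\cdot,p_0)$ is analytic on $\cR$. Finally, on the boundary $\bbE\cong\pd\cS_+$ the denominator $\l_\cR(\xi)-\overline{\l_\cR(p_0)}$ stays bounded away from $0$ (because $\overline{\l_\cR(p_0)}\notin E$), while the numerator is a combination of functions of bounded characteristic with $L^2(\dd\theta_\cR)$ boundary values; together with the Smirnov property of the factors this places $K(\cdot,p_0)$ in $H^2(\a)$.

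The heart of the argument is the reproducing identity, which I would prove by turning the inner product into a Cauchy integral and applying the Direct Cauchy Theorem. Writing $\langle f,K(\cdot,p_0)\rangle=\frac1{2\pi}\int_\bbE f(\xi)\overline{K(\xi,p_0)}\,\dd\theta_\cR(\xi)$ and taking $\xi\in\pd\cS_+$ real, I substitute the pseudocontinuation relations $\cW\overline{e_\a}=\tilde e_\a$ and $\cW\overline{e_{\a+\fj}}=\tilde e_{\a+\fj}$ from \eqref{15may3} into $\overline{K}$, noting that conjugation turns the denominator into $\xi-\l_\cR(p_0)$. Using that $\dd\theta_\cR$ coincides with $\dd\Theta$ on $E$ together with the Wronskian identity \eqref{eq:WronskId}, which expresses $\Theta'$ through $\cW/(\sqrt{\l}\,(e_{\a+\fj}\tilde e_\a+e_\a\tilde e_{\a+\fj}))$, the Widom factor $\cW$ cancels and the integrand collapses to $\dfrac{F(\xi)}{\xi-\l_\cR(p_0)}\,\dd\xi$ (up to an explicit constant), where $F$ is $f$ times a ratio whose numerator is the Wronskian pairing of the boundary solutions with the $p_0$-data and whose denominator is the full Wronskian $e_{\a+\fj}\tilde e_\a+e_\a\tilde e_{\a+\fj}$. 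A direct evaluation at $\xi=\l_\cR(p_0)$ shows this ratio equals $1$, i.e. $F(\l_\cR(p_0))=f(\l_\cR(p_0))=f(p_0)$. Exploiting the reflectionless symmetry to combine the boundary values from the two sides of $E$ into the full contour $\oint_E$, I would then invoke the Direct Cauchy Theorem to conclude $\frac1{2\pi i}\oint_E\frac{F(\xi)}{\xi-\l_\cR(p_0)}\,\dd\xi=F(\l_\cR(p_0))=f(p_0)$, after checking that the accumulated constants reduce to $1$.

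I expect the main obstacle to be precisely this last reduction: verifying that, after the substitutions, the integrand is genuinely the Cauchy transform of a single function $F\in\cN_+(\cS_+)$, so that the Direct Cauchy Theorem applies. This requires the careful interlocking of three facts --- the pseudocontinuation \eqref{15may3} (equivalently \eqref{22jan2}), the Wronskian identity \eqref{eq:WronskId}, and the reflectionless symmetry of $e_\cR$ --- to both cancel the inner Widom factor and symmetrize the single boundary integral into the (DCT) contour integral. The (DCT) hypothesis is indispensable here, since without it the class $H^2_\bot(\a)$ is strictly larger than $\{g:\cW_\cR\overline g\in H^2(\a_\cW-\a)\}$ and the Cauchy reconstruction fails.
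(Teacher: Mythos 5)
Your proposal is correct and follows essentially the same route as the paper's proof: establish membership in $H^2(\a)$ via the Smirnov class and $L^2(\dd\theta_\cR)$ boundary behavior, then obtain the reproducing property by using the pseudocontinuation relation \eqref{15may3} to rewrite the inner product as a Cauchy contour integral $\oint_E$, invoking the Direct Cauchy Theorem, and normalizing the resulting constant to $1$ via the Wronskian identity \eqref{eq:WronskId}. The only (cosmetic) difference is that you cancel the Widom factor with the Wronskian identity before applying DCT, whereas the paper applies DCT first and then uses \eqref{eq:WronskId} to evaluate the prefactor at $\l_0$.
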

\begin{proof}
We note that the given vector belongs to $L^2$ with respect to the measure $\dd \theta_\cR$ and represents a function of Smirnov class. Therefore it belongs to $H^2(\a)$.
	Using that $\mu_\cR$ is real on $\partial \cR$ and \eqref{15may3} we see that, for $f^\a \in H^2(\a)$, one has
	\begin{align*}
		&\frac{1}{2\pi}\int_{\bbE}\overline{k^\a(p,p_0)}f^\a(p)\dd\t_{\cR}(p)\\
		&=\frac{1}{2\pi i}\oint_{E}\frac{\sqrt{\x}\,\tilde e_{\a+\fj}(\x)e_\a(\l_0)+\sqrt{\l_0}\,\tilde e_\a(\x)e_{\a+\fj}(\l_0)}{\cW(\x)(\x-\l_0)}\Theta'(\x) f^\a(p(\x))\dd \x.
	\end{align*}
	By DCT we obtain
	\begin{align*}
		\langle f^\a,k^\a_{p_0}\rangle =\frac{\sqrt{\l_0}\, \Theta'(\l_0) \left(\tilde e_{\a+\fj}(\l_0)e_\a(\l_0)+\tilde e_\a(\l_0)e_{\a+\fj}(\l_0)\right)}{\cW(\l_0)}f^\a(p_0).
	\end{align*}
	Thus by, \eqref{eq:WronskId} we have
	\begin{align*}
		\langle f^\a,k^\a_{p_0}\rangle =f^\a(p_0).
	\end{align*}
\end{proof}

We now clarify interrelations between the spaces $H^2(\a)$ and $H^2_{\G}(\a)\simeq H^2_{\cS_+}(\a)$. The scalar product in $H^2_{\cS_+}(\a)$ was defined by \eqref{18jan0} and also by Definition \ref{d18jan2} is clearly related to the harmonic measure on $E=\pd{\cS_+}$ w.r.t. the point $-1\in\cS_+$.
In other words this is a subspace of $L^2$ w.r.t. the measure $\dd \log\Phi(\l,-1)$. 

Recall that the Martin function $M(\l)$ is actually  defined up to a positive multiplier, its critical points were denoted by  $c_j\in(a_j,b_j)$. The critical points of the 
Green function $G(\l,-1)$ were denoted by  $\x_j(-1)\in(a_j,b_j)$.
\begin{lemma}\label{l19jan2}
The following ratio of two Abelian differentials is of the form
$$
\phi_{\cR}(p)=\frac{\dd \log \Phi(\l_{\cR}(p), -1)}{\dd \t_{\cR}(p)}=\frac{C}{1+\l_{\cR}(p)}\prod_{j\ge 1}\frac{1-\x_j(-1)/\l_{\cR}(p)}{1- c_j/\l_{\cR}(p)},\quad C>0.
$$
It is a function of bounded characteristic in $\cR$, moreover its inner part $\phi_{\cR}^{in}(p)$ is represented by the ratio of the Blaschke products, see \eqref{19jn1},
$$
\phi_{\cR}^{in}(p)=\frac 1{\Phi(\l_{\cR}(p),-1)}\prod_{j\ge 1}\frac{\Phi(\l_{\cR}(p),\x_j(-1))}{\Phi(\l_{\cR}(p), c_j)}.
$$
Let $\psi_{\cR}(p)=\sqrt{\phi_{\cR}(p)/\phi_{\cR}^{in}(p)}$ and $\a_{\psi}$ is the character generated by this function. Then 
$$
f(p)=\psi_{\cR}(p) g(\l_{\cR}(p))\in H^2(\a+\a_{\psi})
$$
if and only if $g\in H^2_{\cS_+}(\a)$ and
$
\|f\|_{H^2(\a+\a_{\psi})}=\|g\|_{H^2_{\cS_+}(\a)}
$. Respectively the reproducing kernels of the spaces are related by
\begin{equation}\label{21jan4}
k^{\a+\a_{\psi}}(p,p_0)=k^{\a}_{\cS_+}(\l_{\cR}(p),\l_{\cR}(p_0))\psi_{\cR}(p)\overline{\psi_{\cR}(p_0)}.
\end{equation}

\end{lemma}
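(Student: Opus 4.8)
The plan is to prove the statement in four movements: the product formula for $\phi_{\cR}$, its inner--outer factorization, the isometry, and finally the reproducing-kernel identity, the last of which is essentially formal once the isometry is in hand.

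First I would note that both $\dd\log\Phi(\l,-1)$ and $\dd\t_{\cR}=\Theta'(\l)\dd\l$ are \emph{single-valued} meromorphic differentials on $\cS_+$: the additive and logarithmic periods of $\Theta$ and of $\log\Phi(\cdot,-1)$ (see \eqref{5amay3} and \eqref{18jan11}) are constants and hence disappear under $\dd$. Consequently $\phi_{\cR}$ descends to a single-valued meromorphic function of $\l$. To obtain the explicit product I would represent the third-kind differential $\dd\log\Phi(\cdot,-1)$ in the same product form as \eqref{eq:thetaPrime}, but with the critical points $\x_j(-1)$ of $G(\cdot,-1)$ in place of the Martin critical points $c_j$, together with one extra simple pole at $\l=-1$ coming from the logarithmic singularity of the Green function; this follows from the conformal/comb construction that produced \eqref{eq:thetaPrime}, applied now to $-\log\Phi(\cdot,-1)$. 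Dividing by \eqref{eq:thetaPrime}, the factor $1/\sqrt{\l}$ and every band-edge square root $1/\sqrt{(1-a_j/\l)(1-b_j/\l)}$ cancel identically, leaving
\[
\phi_{\cR}(\l)=\frac{C}{1+\l}\prod_{j\ge1}\frac{1-\x_j(-1)/\l}{1-c_j/\l},
\]
and $C>0$ is fixed by matching the residue at $\l=-1$ (equivalently the $\l\to-\infty$ asymptotics) against the known signs of $G$ and $M$ on $\bbR_-$.

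Next, each elementary factor $(1-\x_j(-1)/\l)/(1-c_j/\l)$ has inner part $\Phi(\l,\x_j(-1))/\Phi(\l,c_j)$ and $1/(1+\l)$ contributes $1/\Phi(\l,-1)$, since by \eqref{18jan10} the function $\Phi(\l,\z)$ is precisely the Blaschke factor with a single zero at $\z$ and unimodular boundary values ($|\Phi(\cdot,\z)|=e^{-G(\cdot,\z)}=1$ on $E$). Convergence of the two Blaschke products is exactly the Widom condition (PW), $\sum_{\nabla G_{\l_0}=0}G(\x,\l_0)<\infty$; hence $\phi_{\cR}$ is of bounded characteristic with the stated $\phi_{\cR}^{in}$, and $\psi_{\cR}=\sqrt{\phi_{\cR}/\phi_{\cR}^{in}}$ is a well-defined character-automorphic outer function (a square root of an outer function), with some character $\a_{\psi}$ of order two. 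The crux is the isometry, and here I would prove the measure identity on $\pd\cR\cong E$. Since $|\phi_{\cR}^{in}|=1$ there, we have $|\psi_{\cR}|^2=|\phi_{\cR}|=|\dd\log\Phi(\cdot,-1)|/\dd\t_{\cR}$; and on $E$ one has $G(\cdot,-1)\equiv0$, so $\dd\log\Phi(\cdot,-1)=i\,\dd\arg\Phi(\cdot,-1)$ is purely imaginary with $|\dd\arg\Phi(\cdot,-1)|=2\pi\,\dd\omega(\cdot,-1)$, the harmonic measure at $-1$ having total mass one. Thus $\tfrac{1}{2\pi}|\psi_{\cR}|^2\dd\t_{\cR}=\dd\omega(\cdot,-1)$, and since $\|g\|^2_{H^2_{\cS_+}(\a)}$ is exactly $\int_E|g|^2\dd\omega(\cdot,-1)$ (the norm \eqref{18jan0} normalized at $-1$), this yields $\|\psi_{\cR}\,(g\circ\l_{\cR})\|^2_{H^2(\a+\a_{\psi})}=\|g\|^2_{H^2_{\cS_+}(\a)}$. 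The character bookkeeping gives $\a+\a_{\psi}$ immediately, Smirnov class is preserved because $\psi_{\cR}$ is outer, and the inverse $f\mapsto f/\psi_{\cR}$ is well defined for the same reason, so the map is a unitary bijection. I expect the main obstacle to lie precisely here: pinning down the orientation so that $|\phi_{\cR}|\dd\t_{\cR}=2\pi\,\dd\omega$ rather than a signed variant, consistent with the $\oint_E$ top-and-bottom convention used above, and verifying that $\dd\t_{\cR}$ and $\dd\omega(\cdot,-1)$ are mutually absolutely continuous so that the Radon--Nikodym reading of $\phi_{\cR}$ is legitimate.

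Finally, \eqref{21jan4} is the standard covariance of reproducing kernels under a unitary multiplier. Writing $f=\psi_{\cR}\,(g\circ\l_{\cR})$ and using $f(p_0)=\psi_{\cR}(p_0)\,g(\l_{\cR}(p_0))=\psi_{\cR}(p_0)\langle g,k^{\a}_{\cS_+,\l_{\cR}(p_0)}\rangle$ together with the isometry just established, I would identify $\langle f,\ \psi_{\cR}\,(k^{\a}_{\cS_+,\l_{\cR}(p_0)}\circ\l_{\cR})\rangle=g(\l_{\cR}(p_0))$, so that $k^{\a+\a_{\psi}}_{p_0}(p)=\overline{\psi_{\cR}(p_0)}\,\psi_{\cR}(p)\,k^{\a}_{\cS_+}(\l_{\cR}(p),\l_{\cR}(p_0))$, which is \eqref{21jan4}. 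No new ideas are required in this last step.
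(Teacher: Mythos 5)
Your argument follows the same route as the paper's (which is far terser): the isometry rests on the mutual absolute continuity of $\tfrac{1}{2\pi}|\psi_{\cR}|^2\,\dd\t_{\cR}$ and the harmonic measure at $-1$, Smirnov class is preserved because $\psi_{\cR}$ is outer, and the converse direction uses (implicitly in your writeup, explicitly in the paper) the Smirnov maximum principle to upgrade square-integrable boundary values to a harmonic majorant. The only slip is your parenthetical claim that $\a_{\psi}$ has order two --- since $2\a_{\psi}$ equals minus the character of $\phi_{\cR}^{in}$, which is nonzero in general, this is false, but it is nowhere used in your argument.
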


\begin{proof}
By the definition $f(p)$ belongs to the Smirnov class, and its norm is finite  and coincides with the norm of $g$, just because both measures are mutually absolutely continuous. Conversely, we consider $\tilde g(z)$ as the lift of the ratio $f(p)/\psi(p)$ on the universal covering $\bbC_+$. Then $\tilde g$ is of Smirnov class in the upper half plane and it is square integrable w.r.t. to the harmonic measure on $\bbR$.
By the Smirnov maximum principle $\tilde g(z)$ belongs to $H^2$ and therefore possesses a harmonic majorant. Since $\tilde g(\g(z))=e^{2\pi i\a(\g)}\tilde g(z)$ we can interpret $\tilde g(z)=g(\bl(z))$, where $g\in H^2_{\cS_{+}}(\a)$.
\end{proof}

\subsection{From Fourier series to Fourier integral}\label{ss31}

Let $L_{+,x}^2=\{\hat f\in L^2(\bbR): \operatorname{supp}\hat f \subset (x,\infty)\}.$ In particular $L^2_+=L^2_{+,0}$.  The goal of this subsection is to prove the following theorem:
\begin{theorem}\label{thm:DY}
	Let $V\in \cV(E)$. Then there exists a Fourier transform $\cF=\cF(V)$
	\begin{align*}
		\cF: L^2\to L^2(\a),
	\end{align*}
	such that 
	\begin{align*}
		\cF L_V \hat f=\l \cF \hat f \quad\text{and}\quad
		\cF(L^2_{+,x})=e^{i\Theta(\l) x}H^2(\alpha-\eta x).
	\end{align*}
	For $\hat f\in L^2_{+,x}$ it is given  explicitly by
	\begin{align}\label{eq:FourierL2PlusAlpha}
		(\cF \hat f)(\l)=\int_{x}^{\infty}e^{i\Theta(\l)\xi}e_{\a-\eta \xi}(\l)\hat f(\xi)\dd \xi.
	\end{align}
	The corresponding Weyl-solution is of the form
	\begin{align}\label{eq:Weylsolution}
		u_+(x,\l)=\frac{e^{i\Theta(\l)x}e_{\a-\eta x}(\l)}{e_{\a}(\l)}.
	\end{align}
\end{theorem}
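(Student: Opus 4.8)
The plan is to build $\cF$ out of the Weyl solution $u_+$ and to deduce all of its mapping properties from the explicit formula \eqref{eq:FourierL2PlusAlpha}. The backbone is the identity $e^{i\Theta(\l)\xi}e_{\a-\eta\xi}(\l)=e_\a(\l)u_+(\xi,\l)$, which shows that $\cF$ is, up to the normalizing factor $e_\a$, the classical Weyl--Titchmarsh transform associated with $L_V$. So I would first establish the closed form \eqref{eq:Weylsolution} of the Weyl solution, then prove in turn that $\cF$ maps into the stated Hardy space, is isometric, intertwines $L_V$ with multiplication by $\l$, and is onto; the passage from $L^2_{+,x}$ to all of $L^2$ is then a soft exhaustion argument.

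First I would prove \eqref{eq:Weylsolution}. The logarithmic derivative of the Weyl solution is the shifted $m$-function, $\partial_x\log u_+(x,\l)=m_+(\l;x)$, where $m_+(\l;x)$ is the Weyl function of the translated operator $L_{V(\cdot+x)}$. The crucial input is that translation is linearized by the Abel map: $V(\cdot+x)\in\cV(E)$ carries the character $\a-\eta x$, the velocity $\eta$ being read off as the additive character of $\Theta=\Theta^{(0)}$ (this is the $k=0$ analogue of \eqref{19jan3}, verifiable from the Dubrovin dynamics of the divisor together with the normalization $\Theta(\l)\sim\sqrt\l$). Granting this, \eqref{15may7} applied at character $\a-\eta x$ gives $m_+(\l;x)=m_+(0;x)+i\sqrt\l\,e_{\a-\eta x+\fj}(\l)/e_{\a-\eta x}(\l)$, and I would match this against $\partial_x\log\bigl(e^{i\Theta x}e_{\a-\eta x}/e_\a\bigr)=i\Theta(\l)-\partial_\eta\log e_{\a-\eta x}(\l)$. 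The resulting relation is between two single-valued (character $0$) functions on $\cS_+$; since both have the same divisor (poles only at the Dirichlet data where $e_{\a-\eta x}$ vanishes) and the same behaviour as $\l\to-\infty$ (both $\sim i\sqrt\l$, by \eqref{eq:asympmplus} and $\Theta(\l)\sim\sqrt\l$), bounded-characteristic uniqueness in the Widom/DCT setting forces them to coincide. Integrating from $0$ with $u_+(0,\l)=1$ then yields \eqref{eq:Weylsolution}.

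With $u_+$ in hand I would analyze $\cF$ on $L^2_{+,x}$. Membership in $e^{i\Theta(\l)x}H^2(\a-\eta x)$ is a Paley--Wiener statement: for $\hat f$ supported in $(x,\infty)$ the factor $e^{i\Theta(\l)\xi}$ with $\xi>x$ and $\Im\Theta=M\ge0$ places $e^{-i\Theta x}(\cF\hat f)$ in the Smirnov class with the correct character $\a-\eta x$ and square-integrable boundary values. Isometry is the Plancherel theorem for the reflectionless operator: the a.c. spectral density of $L_V$ on $E$ is proportional to $\Im m_+$, and by \eqref{15may7} together with the boundary relation \eqref{15may3} this density equals $|e_\a|^{-2}$ times the density of $\dd\theta_\cR$, so the factor $e_\a$ in \eqref{eq:FourierL2PlusAlpha} converts the spectral $L^2$-norm exactly into the $H^2(\a)$ norm $\tfrac1{2\pi}\int_{\bbE}|\cdot|^2\dd\theta_\cR$. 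The intertwining $\cF L_V\hat f=\l\,\cF\hat f$ follows by integrating by parts twice in $\xi$, using that $u_+(\cdot,\l)$ solves $-u''+Vu=\l u$ and that the boundary terms vanish because $\operatorname{supp}\hat f\subset(x,\infty)$. Surjectivity onto $H^2(\a)$ follows from the spectral theorem: since $\sigma(L_V)=E$ is purely absolutely continuous the Weyl transform exhausts the full spectral space, which \eqref{15may3} and the reproducing-kernel formula \eqref{eq:ReproducingKernels} identify with $H^2(\a)$. Finally, as $x\to-\infty$ the spaces $L^2_{+,x}$ exhaust $L^2(\bbR)$ while their images $e^{i\Theta x}H^2(\a-\eta x)$ form an increasing chain exhausting $L^2(\a)$, so the isometries glue to a unitary $\cF:L^2\to L^2(\a)$.

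The main obstacle is the isometry/surjectivity pair, i.e.\ the Plancherel theorem identifying the image with $H^2(\a)$: this is where the reflectionless property, the boundary identity \eqref{15may3}, and (DCT) must be combined to convert the spectral measure into the Hardy-space inner product and to rule out a nontrivial orthogonal complement. The translation--linearization underlying \eqref{eq:Weylsolution} is the other delicate point, and (as the subsection title suggests) an alternative to the direct argument above is to obtain the whole statement as a scaling limit of the discrete character-automorphic Fourier transform of \cite{SoYud97}, with the Green function replaced by the Martin function $\Theta$.
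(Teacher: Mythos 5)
Your route is genuinely different from the paper's. The paper does not argue directly from the Weyl solution at all: it obtains \eqref{eq:FourierL2PlusAlpha} as a scaling limit of the discrete character-automorphic Fourier basis of \cite{SoYud94,SoYud97,DY16}, the entire analytic content being Theorem \ref{thm:FourierIntegral} (the reproducing-kernel identity \eqref{eq:ContFour}), which is proved by letting $\l_N\to-\infty$, showing $B_{\l_N}^{n_N}\to e^{i\Theta x}$ and $n_N\nu_N\to\eta x$ (Lemma \ref{lem:cxGreenconv}), controlling the discrete measures via the Lipschitz bound of Lemma \ref{lem:nuisAC}, and finally identifying the limiting density with Lebesgue measure through the Martin-function asymptotics from \cite{VoYu16}. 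In that scheme the facts you treat as inputs --- that translation by $x$ shifts the character by exactly $-\eta x$, and that the image of $L^2_{+,x}$ is all of $e^{i\Theta x}H^2(\a-\eta x)$ rather than a proper subspace --- come out as \emph{conclusions} of the limiting procedure. This is exactly what the alternative you mention in your last paragraph amounts to, and it is the one the authors actually execute.

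The gap in your direct argument is at its first and most important step. The claim that $V(\cdot+x)$ carries the character $\a-\eta x$, with the velocity being precisely the additive character $\eta$ of the Martin function, is not ``verifiable from the Dubrovin dynamics of the divisor'' in this generality: for infinitely many gaps the Dubrovin system is an infinite coupled ODE system, and proving that the generalized Abel map linearizes it with that specific velocity is essentially equivalent to the theorem you are trying to prove (it encodes \eqref{no2}, which the paper derives \emph{after} and \emph{from} Theorem \ref{thm:DY}). Your fallback uniqueness argument for \eqref{eq:Weylsolution} also presupposes that $e_{\a-\eta x}(\l)$ is differentiable in $x$ and that $\pd_\eta\log e_\a(\l)$ stays bounded as $\l\to-\infty$; the latter is only established in the paper under the additional hypothesis (PW$_M$) (Corollary \ref{p30jan1}), which is not assumed in Theorem \ref{thm:DY}. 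A secondary slip: the Plancherel density for the transform built from $u_+$ normalized by $u_+(0,\l)=1$ is $\tfrac1\pi\Im R(\x+i0)$, not $\Im m_+$; on $E$ one has $\Im R=|e_\a|^2\,\Theta'$ (by \eqref{15may3} and the Wronskian identity \eqref{eq:WronskId}), which is what converts the spectral norm into the $\tfrac1{2\pi}\oint_E|\cdot|^2\dd\theta_\cR$ norm, whereas $\Im m_+=\bigl(2|e_\a|^2\Theta'\bigr)^{-1}$ and would give the wrong weight. Finally, surjectivity onto $H^2(\a)$ (the DCT step) is correctly flagged as the crux but no argument is offered; in the paper it is precisely the completeness relation obtained by letting $x\to\infty$ in \eqref{eq:ContFour}.
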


To the standard  spectral theory for 1-D Schr\"odinger operators we have to add the following Theorem \ref{thm:FourierIntegral}.
 See also  \cite{DY16}, where a relationship between spectral theorems for Jacobi matrices and 1-D Schr\"odinger operators was shown in the given context. In fact,  the resolvent $(L_V+1)^{-1}$ becomes  a Jacobi matrix with respect to the corresponding Fourier basis in the model space.
We get 
\eqref{eq:FourierL2PlusAlpha} as a limit  of the discrete Fourier representation found in \cite{SoYud94},
$$
(\cF_{SY} \hat f)(p)=\sum_{n=N}^\infty B_{\l_*}^n(\l_\cR(p)) \frac{k_{\l_*}^{\a-\nu_* n}(p)}{\|k_{\l_*}^{\a-\nu_* n}\|}\hat f_n, \quad \hat f\in l^2_{+,N},
$$
where $B_{\l_*} = \Phi(\cdot, \l_*)$ is the complex Green function w.r.t. $\l_*\in \cS_+$ and $\nu_*$ is the character  generated by this function.

\begin{theorem}\label{thm:FourierIntegral}
	The reproducing kernels $k^\a(p,p_0)$ of the Hardy spaces $H^2(\a)$ are given by
	\begin{align}\label{eq:ContFour}
		k^{\a}(p,p_0)-e^{i(\t_{\cR}(p)-\overline{\t_{\cR}(p_0)})x}k^{\a-\eta x}(p,p_0)=\int_{0}^{x}e^{i(\t_{\cR}(p)-\overline{\t_{\cR}(p_0)})\xi}e_\cR(p,\a-\eta \xi)\overline{e_\cR(p_0,\a-\eta \xi)}\dd \xi.
	\end{align}
\end{theorem}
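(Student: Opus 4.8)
The plan is to read the subtracted term $e^{i(\t_{\cR}(p)-\overline{\t_{\cR}(p_0)})x}k^{\a-\eta x}(p,p_0)$ as a reproducing kernel and thereby recognize the whole identity as a resolution of the reproducing kernel of $H^2(\a)$ along an increasing chain of subspaces. First I would record that multiplication by $e^{i\t_{\cR}(p)x}$ is an isometric embedding $H^2(\a-\eta x)\hookrightarrow H^2(\a)$: this factor is inner (it has character $\eta x$, since $\t_{\cR}(\un\g p)=\t_{\cR}(p)+2\pi\eta(\g)$, and is unimodular on $\bbE=\pd\cS_+$ because $\Im\t_{\cR}=M$ vanishes there). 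Consequently $\cM_x:=e^{i\t_{\cR}x}H^2(\a-\eta x)$ is a closed subspace whose reproducing kernel is exactly $e^{i(\t_{\cR}(p)-\overline{\t_{\cR}(p_0)})x}k^{\a-\eta x}(p,p_0)$, so the left-hand side of the theorem is the reproducing kernel of the orthogonal complement $\cN_x:=H^2(\a)\ominus\cM_x$. As $x$ grows $\cM_x$ decreases and $\cN_x$ increases, and both sides vanish at $x=0$; the statement is then the assertion that the kernel of $\cN_x$ is built up from the integrand.

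Two routes present themselves. The quick one is to differentiate in $x$: the derivative of the right-hand side is immediate, while differentiating $e^{i(\cdots)x}k^{\a-\eta x}$ by the product rule (using $\tfrac{d}{dx}k^{\a-\eta x}=-\pd_\eta k^{\a-\eta x}$) turns the claim, after cancelling the common exponential and setting $\b=\a-\eta x$, into the pointwise kernel identity \eqref{21jan1}, $e_{\cR}(p,\b)\overline{e_{\cR}(p_0,\b)}=\bigl(\tfrac{\t_{\cR}(p)-\overline{\t_{\cR}(p_0)}}{i}+\pd_\eta\bigr)k^{\b}(p,p_0)$, after which integrating back from $0$ to $x$ closes the argument. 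Since \eqref{21jan1} is itself extracted from the integral representation \eqref{20jan4} (its $x\to\infty$ limit), to avoid circularity I would instead build the superposition directly. I introduce the generalized eigenfunctions $u_\xi(p)=e^{i\t_{\cR}(p)\xi}e_{\cR}(p,\a-\eta\xi)$, which are character automorphic with character $\a$ but, like plane waves, need not lie in $H^2(\a)$ individually, and set $I_x(\cdot,p_0)=\int_0^x u_\xi\,\overline{u_\xi(p_0)}\,\dd\xi$. That $I_x(\cdot,p_0)$ defines an element of $\cN_x$ rests on the orthogonality $u_\xi\perp\cM_x$ for $\xi<x$: reducing by the isometry to $e_{\cR}(\cdot,\b)\perp e^{i\t_{\cR}\epsilon}H^2(\b-\eta\epsilon)$ with $\b=\a-\eta\xi$ and $\epsilon=x-\xi>0$, the pairing becomes $\langle e_{\cR}(\cdot,\b)e^{-i\t_{\cR}\epsilon},g\rangle$ over $H^2(\b-\eta\epsilon)$, and the pseudocontinuation identity \eqref{22jan2} together with the DCT description of $H^2_\bot$ (Remark \ref{rem30jan1}) shows $e_{\cR}(\cdot,\b)e^{-i\t_{\cR}\epsilon}\in H^2_\bot(\b-\eta\epsilon)$, so this vanishes.

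The main obstacle is the remaining reproducing property, $\langle f,I_x(\cdot,p_0)\rangle=f(p_0)$ for every $f\in\cN_x$, which amounts to the completeness statement that $\{u_\xi\}_{0<\xi<x}$ is a delta-orthonormal family diagonalizing $\cN_x$ (the Paley--Wiener content, with $\langle u_\xi,u_\zeta\rangle=\delta(\xi-\zeta)$). I would establish this as a continuum limit of the discrete Sodin--Yuditskii Fourier expansion recalled just above the theorem, with the Blaschke shift $B_{\l_*}^n$ and the normalized kernels $k^{\a-n\nu_*}_{\l_*}$ replaced in the scaling limit by $e^{i\t_{\cR}x}$ and the Martin-function data; equivalently, one diagonalizes the shift semigroup $\{e^{i\t_{\cR}s}\}_{s\ge0}$ acting on the chain $\{\cM_x\}$ in the Lax--Phillips manner and reads off its spectral representation. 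Granting this, $I_x$ is the reproducing kernel of $\cN_x$, it coincides with the left-hand side, and the theorem follows (and in fact yields \eqref{20jan4} and Theorem \ref{thm:DY} in the limit $x\to\infty$). The step I expect to require the most care is precisely the delta-orthonormality and completeness of the $u_\xi$, since the individual $u_\xi$ are not square-integrable and the estimate must be carried out at the level of the superposition.
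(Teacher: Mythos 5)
Your structural reading of the identity is correct and matches the paper's intent: the left-hand side is the reproducing kernel of $H^2(\a)\ominus e^{i\t_{\cR}x}H^2(\a-\eta x)$, the differentiation route via \eqref{21jan1} would indeed be circular, and the right way forward is the one you name --- a continuum limit of the discrete Sodin--Yuditskii expansion in which $B_{\l_*}^n$ becomes $e^{i\t_{\cR}x}$. The paper's proof is exactly that limit, carried out via \eqref{eq:DiscrFour} with $\l_N\to-\infty$ chosen so that $G(-1,\l_N)=M(-1)/N$. The problem is that your proposal stops at precisely the point where the proof begins: everything after ``Granting this'' is the theorem. The ``delta-orthonormality and completeness of the $u_\xi$'' is not a lemma you can cite or defer; it is equivalent to \eqref{eq:ContFour} itself, and asserting $\langle u_\xi,u_\zeta\rangle=\delta(\xi-\zeta)$ with constant exactly $1$ silently assumes the two hardest ingredients.

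Concretely, two things are missing. First, for the limit of the discrete sums to produce an integral against Lebesgue measure at all, one needs the limiting measure $\dd\u(x)=-\dd\bigl(e^{-2\Im\Theta(-1)x}k^{\a-\eta x}(p_*,p_*)\bigr)$ to be absolutely continuous; the paper proves this (Lemma \ref{lem:nuisAC}) by a two-sided Lipschitz bound on $x\mapsto\log k^{\a-\eta x}(p_0,p_0)$, and the nontrivial direction requires \emph{constructing an inner function with character $-\eta$}, namely $e^{i\tilde\Theta}$ with $\tilde\Theta=H-\Theta$, $H(\l)=\sqrt{\l}\prod_j\sqrt{(\l-b_j)/(\l-a_j)}$, using that the Martin function lifts to a point mass at infinity. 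Nothing in your sketch produces this. Second, even after the limit one only obtains \eqref{eq:FourierInt2} with an unknown integrable density $f_\a(\xi)$; the normalization $f_\a\equiv 1$ is a separate argument in which the paper differentiates, takes $\l\to-\infty$, and invokes the estimate $\limsup_{\l\to-\infty}(-\log|B(\l)|)/M(\l)=0$ for Blaschke products with zeros in $\bbR_+\setminus E$ from \cite{VoYu16}. This is where the specific normalization of $e_{\cR}(p,\a)$ by the canonical product \eqref{19ja1} enters, and it is invisible in your formulation. (Your orthogonality observation $u_\xi\perp\cM_x$ via pseudocontinuation is a nice and essentially correct remark, but it only gives that the candidate $I_x$ lies in the right subspace, which is the easy half.) As written, the proposal is a correct plan with the proof's substance left unexecuted.
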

\begin{lemma}\label{lem:nuisAC}
	Fix $p_0\in\cR$. The function 
	$$
	k^{\a-\eta x}(p_0,p_0)
	$$
	is Lipschitz continuous. In particular the measure 
	$$
	\dd e^{-2x} k^{\a-\eta x}(p_0,p_0)
	$$
	is absolutely continuous with respect to the Lebesgue measure. 
\end{lemma}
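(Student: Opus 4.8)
The plan is to prove the stronger statement that $x\mapsto k^{\a-\eta x}(p_0,p_0)$ is locally Lipschitz; the absolute continuity of $\dd[e^{-2x}k^{\a-\eta x}(p_0,p_0)]$ is then immediate, since $e^{-2x}$ is a smooth factor and the Stieltjes measure of a locally Lipschitz function is absolutely continuous. I stress that, although on the diagonal $p=p_0$ this lemma is formally equivalent to the Fourier integral identity \eqref{eq:ContFour} of Theorem \ref{thm:FourierIntegral}, I must argue independently: that theorem will be obtained \emph{from} the present lemma via the fundamental theorem of calculus, so invoking it here would be circular.

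First I would pass to the nested family of subspaces $\cH_x=e^{i\t_\cR x}H^2(\a-\eta x)\subset H^2(\a)$, $x\ge 0$. Since $\Im\t_\cR=M\ge 0$ in the interior while $\t_\cR$ is real on $\pd\cR$, the factor $e^{i\t_\cR(x_2-x_1)}$ is inner and unimodular on the boundary, whence $\cH_{x_2}\subseteq\cH_{x_1}$ for $x_1\le x_2$; a short computation with the reproducing property shows that the reproducing kernel of $\cH_x$ at $p_0$ is $e^{i(\t_\cR(p)-\overline{\t_\cR(p_0)})x}k^{\a-\eta x}(p,p_0)$, with diagonal value $\phi(x):=e^{-2M(\l_\cR(p_0))x}k^{\a-\eta x}(p_0,p_0)=\|P_{\cH_x}k^\a_{p_0}\|^2$. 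Nesting makes $\phi$ non-increasing, so $\dd\phi$ is a well-defined (non-positive) measure; moreover $\phi$ differs from $k^{\a-\eta x}(p_0,p_0)$ only by the smooth bounded factor $e^{-2M(\l_\cR(p_0))x}$, so local Lipschitz continuity of either is equivalent. Continuity of $\phi$ already follows from the DCT property (condition (DCT)$'$ gives continuity of $\a\mapsto k^\a(p_0,p_0)$), but monotone-plus-continuous permits a singular-continuous part, as the Cantor function shows; hence the substance of the lemma is a quantitative increment bound.

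Next I would reduce that bound to a single model-space estimate. For $x_2=x_1+h$ one has $\phi(x_1)-\phi(x_2)=\|P_{\cH_{x_1}\ominus\cH_{x_2}}k^\a_{p_0}\|^2$, and factoring out the boundary-unimodular multiplier $e^{i\t_\cR x_1}$ identifies this with $e^{-2M(\l_\cR(p_0))x_1}\,\kappa_h^\beta(p_0,p_0)$, where $\beta=\a-\eta x_1$ and $\kappa_h^\beta$ is the reproducing kernel of the complement $K_h^\beta:=H^2(\beta)\ominus e^{i\t_\cR h}H^2(\beta-\eta h)$. Since the exponential prefactor is $\le 1$, local Lipschitz continuity of $\phi$ reduces to the uniform linear bound
\begin{equation*}
\sup_{\beta\in\G^*}\kappa_h^\beta(p_0,p_0)\le C h,\qquad 0\le h\le h_0 .
\end{equation*}
To establish it I would combine the explicit formula \eqref{eq:ReproducingKernels} for $k^\beta(p_0,p_0)$ with the pseudocontinuation description of the orthogonal complement from Remark \ref{rem30jan1}: the extremal element of $K_h^\beta$ is, to leading order in $h$, proportional to the generating function $e^{i\t_\cR(\cdot)s}e_\cR(\cdot,\beta)$ swept over an $s$-interval of length $h$, so that $\kappa_h^\beta(p_0,p_0)$ is controlled by $h\,|e_\cR(p_0,\beta)|^2$ up to a bounded geometric factor; one may then take $C$ to be a multiple of $\sup_{\beta\in\G^*}|e_\cR(p_0,\beta)|^2$, which is finite because $\beta\mapsto e_\cR(p_0,\beta)$ is continuous on the compact group $\G^*$ under (DCT).

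I expect the main obstacle to be exactly this uniform linear upper bound on $\kappa_h^\beta(p_0,p_0)$. The soft inputs (monotonicity from nesting, continuity from DCT) do not exclude a singular-continuous component, so one must extract a genuine first-order estimate on the shrinking spaces $K_h^\beta$ \emph{without} differentiating $k^\beta(p_0,p_0)$ in $\beta$, which would beg the question. Compactness of $\G^*$ together with the DCT-continuity of $e_\cR(p_0,\cdot)$ is what upgrades the pointwise first-order behaviour to a uniform Lipschitz constant, and the delicate technical point is absorbing the band-edge growth of $e_\cR$ into the estimate for $K_h^\beta$.
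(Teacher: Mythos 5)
Your reduction is sound as far as it goes: the nesting $e^{i\t_\cR x_2}H^2(\a-\eta x_2)\subseteq e^{i\t_\cR x_1}H^2(\a-\eta x_1)$ and the monotonicity of $\phi(x)=e^{-2M(\l_\cR(p_0))x}k^{\a-\eta x}(p_0,p_0)$ correctly reproduce one half of the needed two-sided estimate (this is exactly the paper's first step, obtained there from the Cauchy--Schwarz inequality $|w(p_0)|^2k^{\a-\b}(p_0,p_0)\le k^{\a}(p_0,p_0)$ applied to the inner multiplier $w=e^{i\t_\cR x}$ of character $x\eta$). You also correctly identify that monotone plus continuous is not enough and that a quantitative increment bound is the whole content of the lemma.

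The gap is precisely at the step you flag as the main obstacle, and your proposed resolution does not close it. The claim that the extremal element of $K_h^\beta=H^2(\beta)\ominus e^{i\t_\cR h}H^2(\beta-\eta h)$ is ``to leading order proportional to $e^{i\t_\cR s}e_\cR(\cdot,\beta)$ swept over an interval of length $h$,'' so that $\kappa_h^\beta(p_0,p_0)\le C\,h\,|e_\cR(p_0,\beta)|^2$, \emph{is} the diagonal case of the Fourier integral identity \eqref{eq:ContFour} --- the very statement you correctly said you must not assume, since Theorem \ref{thm:FourierIntegral} is derived from this lemma (via Lemma \ref{eq:convergenceMeasure}). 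Neither the explicit formula \eqref{eq:ReproducingKernels} nor the pseudocontinuation description of $H^2_\bot(\a)$ yields a first-order-in-$h$ estimate without differentiating $k^\beta$ or $e_\cR(\cdot,\beta)$ in the $\eta$-direction, whose existence is exactly what is in question; and in the relevant application $p_0=p_*$ lies over $\l=-1\in\bbR$, so even evaluating \eqref{eq:ReproducingKernels} on the diagonal requires a derivative. The paper avoids all of this with a different device: it constructs a \emph{second} inner function $e^{i\tilde\Theta}$ with the opposite character $-\eta$, by setting $\tilde\Theta=H-\Theta$ where $H(\l)=\sqrt{\l}\prod_j\sqrt{(\l-b_j)/(\l-a_j)}$ is single-valued and Herglotz in $\cS_+$ and dominates $\Theta$ on the universal cover (because the measure of the lifted Martin function is pure point). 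Running the same Cauchy--Schwarz trick with $e^{i\tilde\Theta x}$ gives $\log k^{\a}(p_0,p_0)-\log k^{\a-\eta x}(p_0,p_0)\le 2\,\Im\tilde\t_\cR(p_0)\,x$, and together with $0<\inf_\a k^\a(p_0,p_0)\le\sup_\a k^\a(p_0,p_0)<\infty$ this yields the two-sided Lipschitz bound. That construction of an inner function realizing the character $-\eta$ is the missing idea in your argument; without it, or some genuinely independent proof of the uniform bound $\kappa_h^\beta(p_0,p_0)\le Ch$, the proof is incomplete.
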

\begin{proof}
	Fix $\a\in\G^*$. Note that for a character automorphic inner function $w$ with character $\b$ we have,
	$$
	|w(p_0)k^{\a-\b}(p_0,p_0)|^2=|\langle wk^{\a-\b}_{p_0},k^{\a}_{p_0}\rangle|^2\leq k^{\a-\b}(p_0,p_0)k^{\a}(p_0,p_0).
	$$
	Taking $w(p) = e^{i\t_{\cR}(p)x}$ and noting that $k^{\a}(p_0, p_0)$ is real-valued, we have
	$$
	e^{i(\t_\cR(p_0)-\overline{\t_\cR(p_0)})x}k^{\a-\eta x}(p_0,p_0)\leq k^{\a}(p_0,p_0).
	$$
	Taking the log we see that
	$$
	\log k^{\a-\eta x}(p_0,p_0)-\log k^{\a}(p_0,p_0)\leq 2\Im\t_{\cR}(p_0)x.
	$$
	Our goal is to construct an inner function $e^{i\tilde \Theta(\l)}$ with inverse character $-\eta$. By the same trick we will then obtain 
	$$
	\log k^{\a}(p_0,p_0)-\log k^{\a-\eta x}(p_0,p_0)\leq 2\Im\tilde\t_{\cR}(p_0)x.
	$$
	Since we have proved this for arbitrary $\a\in\G^*$ and 
	$$
	0<\inf_{\a\in\G^*}k^\a(p_0,p_0)\le \sup_{\a\in\G^*}k^\a(p_0,p_0)<\infty,
	$$ 
	 this concludes the proof. 
	 
	 Note that the function
	$$
	H(\l)=\sqrt{\l}\prod_{j=1}^{\infty}\sqrt{\frac{\l-b_j}{\l-a_j}}
	$$ 
	has positive imaginary part in $\cS_+$. Due to the behaviour at infinity in the given domain, if we lift it to the universal cover, the corresponding Nevanlinna measure has a mass point at infinity on the universal covering. 
	We note also that in our case the measure corresponding to the lifting of the Martin function $M(\l)=\Im\Theta(\l)$ is pure point, see e.g.   \cite{VoYu16}. Thus, there is a constant $\vk> 0$ such that 
	$$
	\vk H(\bl(z))-\t(z)
	$$
	is represented by a positive measure, where $\t = \Theta \circ \bl$. Hence, we can set $\tilde\Theta:=\vk H-\Theta$, and, in fact, $\vk=1$. It  has positive  imaginary part,  and, since $H$ is single valued in $\cS_+$, $\tilde\Theta$ is additive character automorphic with the character $-\eta$. 
\end{proof}
Note that since our domain is Dirichlet regular and $M(-1) > 0$ we can choose a sequence $\{\l_N\}\subset\bbR_-$ such that $\l_N \to -\infty$ and
$$
G(-1,\l_N)=\frac{M(-1)}{N}.
$$
Let $p_* \in \cR$ be such that $\l_\cR(p_*) = -1$, and similarly let $p_N \in \cR$ be such that $\l_\cR(p_N) = \l_N$ defined above.  
For notational brevity, we henceforth denote $\l = \l_\cR(p)$ ($\l_0 = \l_\cR(p_0)$) unless otherwise noted. 

Since, for fixed $N$,
$$
B_{\l_N}(\l)^k\frac{k^{\a-k\nu_N}(p,p_N)}{\sqrt{k^{\a-k\nu_N}(p_N,p_N)}},
$$
form an orthonormal basis of $H^2(\a)$, we obtain that
\begin{align}\label{eq:DiscrFour}
	k^{\a}(p,p_0)-B_{\l_N}(\l)^n\overline{B_{\l_N}(\l_0)^n}k^{\a-n\nu_N}(p,p_0)\nonumber \\
	=\sum_{k=0}^{n-1}B_{\l_N}(\l)^k\overline{B_{\l_N}(\l_0)^k}\frac{k^{\a-n\nu_N}(p,p_N)\overline{k^{\a-n\nu_N}(p_0,p_N)}}{k^{\a-k\nu_N}(p_N,p_N)}.
\end{align}
We will show that the Fourier series \eqref{eq:DiscrFour} converges to the Fourier integral \eqref{eq:ContFour} as $N\to\infty$. 
\begin{lemma}\label{lem:cxGreenconv}
Suppose $n_N$ is such that $n_N/N \to x$ as $N \to \infty$ and $\l_N$ as above.  Then
\begin{align}
\label{eq:cxGreenconv}
\lim_{N \to \infty} B_{\l_N}(\l)^{n_N} = e^{i\Theta(\l)x}
\end{align}
and
\begin{align*}
\lim_{N \to \infty} n_N \nu_N = \eta x.
\end{align*}
\end{lemma}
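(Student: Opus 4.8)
The plan is to prove both limits at once by passing to (branches of) logarithms on the universal cover $\bbC_+$, where the relevant functions are single-valued, and then reading off the modulus, the argument, and the character from one convergence statement. Fix the base point $z=i$, so that $\bl(i)\in\bbR_-$ and, by \eqref{19jn1}, $\Phi(\bl(i),\l_0)>0$. Put $\varphi_N(z):=\log\Phi(\bl(z),\l_N)$, choosing the branch that is real at $z=i$, and $\t(z):=\Theta(\bl(z))$. Since $\bbC_+$ is simply connected and, for $z$ ranging over a fixed compact set and $N$ large, the unique zero $\bl^{-1}(\l_N)\cap i\bbR_+$ of $\Phi(\cdot,\l_N)$ has receded towards $i\infty$ (because $\l_N\to-\infty$), both $\varphi_N$ and $\t$ are honest single-valued analytic functions there. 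The goal becomes
$$
n_N\,\varphi_N(z)\longrightarrow ix\,\t(z)\qquad\text{locally uniformly on }\bbC_+;
$$
exponentiating yields \eqref{eq:cxGreenconv}, while comparing the (constant) increments of the two sides under the deck transformation $\g_k$ will give the character statement.

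First I would establish convergence of the real parts. By \eqref{18jan10} we have $\Re\varphi_N=-G(\bl(z),\l_N)$, while $\Re(i\t)=-\Im\t=-M(\bl(z))$ since $\Im\Theta=M$. By the definition of $M$ as the symmetric Martin function at infinity in the regular domain $\cS_+$ (condition (R)) — equivalently, by Martin boundary theory — the normalized Green functions converge locally uniformly,
$$
\frac{G(\l,\l_N)}{G(-1,\l_N)}\longrightarrow\frac{M(\l)}{M(-1)},\qquad \l_N\to-\infty.
$$
Combining this with the normalization $G(-1,\l_N)=M(-1)/N$ and the hypothesis $n_N/N\to x$ gives
$$
n_N\,G(\l,\l_N)=\frac{n_N}{N}\,M(-1)\,\frac{G(\l,\l_N)}{G(-1,\l_N)}\longrightarrow x\,M(\l)
$$
locally uniformly, i.e.\ $\Re(n_N\varphi_N)\to\Re(ix\t)$. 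On $i\bbR_+$ (where $\bl\in\bbR_-$, so $\Theta=iM$ and $\Phi>0$) both $n_N\varphi_N$ and $ix\t$ are real-valued, so this already proves the modulus part of \eqref{eq:cxGreenconv}.

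To upgrade real-part convergence to full convergence I would use harmonic conjugation. The functions $u_N:=\Re(n_N\varphi_N-ix\t)\to0$ locally uniformly, and since both $n_N\varphi_N$ and $ix\t$ are real on $i\bbR_+$, the harmonic conjugate of $u_N$ vanishes at $z=i$. Interior gradient estimates for harmonic functions then force this conjugate to $0$ locally uniformly as well, whence $n_N\varphi_N-ix\t\to0$ locally uniformly on $\bbC_+$. Exponentiating gives the first limit. For the characters, note that $\varphi_N(\g_k z)-\varphi_N(z)=2\pi i\,\nu_N(\g_k)$ and $\t(\g_k z)-\t(z)=2\pi\,\eta(\g_k)$ are constants (by \eqref{18jan11} and \eqref{5amay3}); evaluating the established convergence at $z$ and at $\g_k z$ gives $2\pi i\,n_N\nu_N(\g_k)\to 2\pi i\,x\,\eta(\g_k)\pmod{2\pi i\bbZ}$, that is, $n_N\nu_N\to x\eta$ in $\G^*$.

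The main obstacle is precisely the passage from convergence of the moduli (real parts) to convergence of the arguments: controlling the phase of the high power $\Phi(\cdot,\l_N)^{n_N}$ as $n_N\to\infty$, where a naive per-factor phase error of size $o(1)$ could accumulate to size $O(n_N)$. Working on the simply connected cover $\bbC_+$ with the analytic logarithms $\varphi_N$, and using real-valuedness on $i\bbR_+$ to pin down the additive imaginary constant, is exactly what prevents this accumulation, since the harmonic-conjugate estimate then controls the entire phase at once rather than factor by factor. A secondary point requiring care is the local uniformity (and minimality) in the Martin-kernel convergence, which rests on the regularity assumption (R) and the fact that infinity is the distinguished Martin boundary point for which $M$ was normalized.
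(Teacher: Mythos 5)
Your proposal is correct and rests on exactly the same key fact as the paper's proof: the Martin-kernel convergence $G(\l,\l_N)/G(-1,\l_N)\to M(\l)/M(-1)$ combined with the normalization $G(-1,\l_N)=M(-1)/N$ and $|B_{\l_N}(\l)|=e^{-G(\l,\l_N)}$, which gives $n_N\log|B_{\l_N}(\l)|\to -xM(\l)=-x\,\Im\Theta(\l)$. The paper's proof stops there (leaving the phase and character statements implicit), whereas you carefully supply the missing upgrade from modulus to full convergence via single-valued logarithms on $\bbC_+$, real-valuedness on $i\bbR_+$, and harmonic conjugation — a welcome completion rather than a different route.
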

\begin{proof}
This follows from the fact that
\begin{align*}
\lim_{N \to \infty} \frac{G(\l, \l_N)}{G(-1, \l_N)} &= \lim_{N\to\infty}\frac{-N\log|B_{\l_N}(\l)|}{M(-1)} \\
&= \frac{M(\l)}{M(-1)}
\end{align*}
and that $M(\l) = \Im(\Theta(\l))$.
\end{proof}

\begin{lemma}\label{eq:convergenceMeasure}
	Let 
	\begin{align*}
	f_N(k/N):=B_{\l_N}(-1)^{k}\frac{k^{\a-k\nu_N}(p_*,p_N)}{\sqrt{k^{\a-k\nu_N}(p_N,p_N)}}.
	\end{align*}
	and let $n_N$ be such that $n_N/N$ converges as $N \to \infty$. Then 
	\begin{align*}
	\lim\limits_{N\to\infty}\sum_{k=0}^{n_N-1}|f_N(k/N)|^2\d_{k/N}(A)\overset{\ast}{\rightharpoonup}\u(A),
	\end{align*}
	where
	\begin{align*}
	\dd\u(x)=-\dd(e^{-2\Im\Theta(-1)x}k^{\a-\eta x}(p_*,p_*))
	\end{align*}
	is absolutely continuous.
\end{lemma}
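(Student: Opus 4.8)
The plan is to reduce the asserted weak-$\ast$ convergence to pointwise convergence of cumulative distribution functions, which I can compute in closed form. The starting point is the observation that $f_N(k/N)$ is exactly the value at the point $p_*$ (where $\l_\cR(p_*)=-1$) of the $k$-th vector of the orthonormal basis of $H^2(\a)$ appearing in \eqref{eq:DiscrFour}. Evaluating the discrete Fourier identity \eqref{eq:DiscrFour} on the diagonal $p=p_0=p_*$, its left-hand side becomes $k^\a(p_*,p_*)-|B_{\l_N}(-1)|^{2m}\,k^{\a-m\nu_N}(p_*,p_*)$, while its right-hand side collapses into $\sum_{k=0}^{m-1}|f_N(k/N)|^2$. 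This yields the telescoping formula
\[
\sum_{k=0}^{m-1}|f_N(k/N)|^2=k^\a(p_*,p_*)-|B_{\l_N}(-1)|^{2m}\,k^{\a-m\nu_N}(p_*,p_*),\qquad m\le n_N.
\]
These partial sums are monotone in $m$ and bounded above by $k^\a(p_*,p_*)<\infty$, so the total masses remain uniformly bounded and no mass can escape to infinity.

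Next I would fix $x\ge 0$, put $m_N:=\lfloor Nx\rfloor+1$, and read the left-hand side above as the value $F_N(x)$ of the distribution function of the discrete measure $\sum_{k=0}^{n_N-1}|f_N(k/N)|^2\d_{k/N}$. Since $m_N/N\to x$, Lemma \ref{lem:cxGreenconv} gives $|B_{\l_N}(-1)|^{2m_N}\to e^{-2\Im\Theta(-1)x}$ and $m_N\nu_N\to\eta x$. Invoking the DCT property in the form (DCT)$'$, which guarantees that the map $\b\mapsto k^\b(p_*,p_*)$ is continuous on $\G^*$, I obtain $k^{\a-m_N\nu_N}(p_*,p_*)\to k^{\a-\eta x}(p_*,p_*)$, and therefore
\[
F_N(x)\longrightarrow k^\a(p_*,p_*)-e^{-2\Im\Theta(-1)x}\,k^{\a-\eta x}(p_*,p_*)=:F(x).
\]
A direct evaluation identifies $F$ as the distribution function of $\u$: indeed $F(0)=0$ and $\dd F=-\dd\big(e^{-2\Im\Theta(-1)x}k^{\a-\eta x}(p_*,p_*)\big)=\dd\u$. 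Taking $m=n_N$ in the telescoping formula and letting $N\to\infty$ shows in the same way that the full masses converge to $\u([0,\lim n_N/N])$.

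Finally I would upgrade this pointwise convergence of distribution functions to the weak-$\ast$ convergence of measures. By Lemma \ref{lem:nuisAC} the limit measure $\u$ is absolutely continuous, so $F$ is continuous on all of $\bbR$; pointwise convergence of the non-decreasing functions $F_N$ to the continuous non-decreasing limit $F$ then forces locally uniform convergence by a P\'olya-type argument, and convergence of the distribution functions at every point together with convergence of the total masses is precisely the statement that the finite positive measures converge weak-$\ast$ to $\u$.

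I expect the delicate point to be the interchange of the limit $N\to\infty$ with the evaluation of the reproducing kernel at the drifting character $\a-m_N\nu_N$: this is exactly where absolute continuity (equivalently DCT) is indispensable, since without continuity of $\b\mapsto k^\b(p_*,p_*)$ the limiting distribution function could fail to be continuous and the weak-$\ast$ limit could acquire atoms, breaking the identification with the absolutely continuous $\u$.
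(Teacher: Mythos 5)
Your proposal is correct and follows essentially the same route as the paper: evaluate the discrete Fourier identity \eqref{eq:DiscrFour} on the diagonal at $p_*$ to get the telescoping partial sums, pass to the limit via Lemma \ref{lem:cxGreenconv} together with the (DCT)$'$ continuity of $\b\mapsto k^\b(p_*,p_*)$, and conclude absolute continuity from Lemma \ref{lem:nuisAC}. The only cosmetic difference is that you finish by identifying the limit of the cumulative distribution functions (P\'olya-style), whereas the paper invokes compactness of the family of measures and uniqueness of subsequential limits --- the same standard argument in different clothing.
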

\begin{proof}
Evaluating \eqref{eq:DiscrFour} at $p = p_0 = p_*$, we have
$$
k^{\a}(p_*,p_*)-|B_{\l_N}(-1)|^{2n}k^{\a-n\nu_N}(p_*,p_*)=\sum_{k=0}^{n-1}|B_{\l_N}(-1)|^{2k}\frac{|k^{\a-k\nu_N}(p_*,p_N)|^2}{k^{\a-k\nu_N}(p_N,p_N)}.
$$
Let $x>0$ be such that $n_N/N\to x$ as $N\to\infty.$ Then we obtain for the right hand-side that 
\begin{align}\label{Dec07}
\lim_{N\to\infty}\left\{k^{\a}(p_*,p_*)-|B_{\l_N}(-1)|^{2n_N}k^{\a-n_N\nu_N}(p_*,p_*)\right\}\nonumber\\
=k^{\a}(p_*,p_*)-e^{-2\Im\Theta(-1)x}k^{\a-\eta x}(p_*,p_*).
\end{align}
On $\cB([0,x])$ we define the compact  family of  measures
$$
\u_N(A)=\sum_{k=0}^{n_N-1} |B_{\l_N}(-1)|^{2k}\frac{|k^{\a-k\nu_N}(p_*,p_N)|^2}{k^{\a-k\nu_N}(p_N,p_N)}\d_{k/N}(A).
$$
By \eqref{Dec07} we obtain that all subsequences converge to the same limit and hence
\begin{align*}
k^{\a}(p_*,p_*)-e^{-2\Im\Theta(-1)x}k^{\a-\eta x}(p_*,p_*)=\int_{0}^{x}\dd\u(\xi).
\end{align*}
Finally, $\u$ is absolutely continuous by   Lemma \ref{lem:nuisAC}. 
\end{proof}
\begin{proof}[Proof of Theorem \ref{thm:FourierIntegral}]
	In general we write
	\begin{align*}
	k^{\a}(p,p_0)&-B_{\l_N}(\l)^n\overline{B_{\l_N}(\l_0)^n}k^{\a-n\nu_N}(p,p_0)\\
	&=\sum_{k=0}^{n-1}B_{\l_N}(\l)^k\overline{B_{\l_N}(\l_0)}^k\frac{k^{\a-n\nu_N}(p,p_N)\overline{k^{\a-n\nu_N}(p_0,p_N)}}{k^{\a-k\nu_N}(p_N,p_N)}\\
	&=\sum_{k=0}^{n-1}\frac{B_{\l_N}(\l)^k\overline{B_{\l_N}(\l_0)^k}}{|B_{\l_N}(-1)|^{2k}}\frac{k^{\a-k\nu_N}(p,p_N)}{k^{\a-k\nu_N}(p_*,p_N)}\overline{\left(\frac{k^{\a-k\nu_N}(p_0,p_N)}{k^{\a-k\nu_N}(p_*,p_N)}\right)}|f_N(k/N)|^2
	\end{align*}
	Note that
	\begin{align*}
	k^{\a+\frac{\nu_N}{2}}(p,p_N)=C\sqrt{\frac{B_{\l_N}(\l)}{\l-\l_N}\prod_{j=1}^{\infty}\frac{\l-\l_j}{\l-c_j}\frac{B_{c_j}(\l)}{B_{\l_j}(\l)}}\prod_{j=1}^{\infty}B_{\l_j}(\l)^{\frac{1+\e_j}{2}},\quad C>0.
	\end{align*}
	That is,
	\begin{align*}
	\frac{k^{\a-k\nu_N}(p,p_N)}{k^{\a-k\nu_N}(p_*,p_N)}=\sqrt{\frac{B_{\l_N}(\l)}{B_{\l_N}(-1)}\frac{-1-\l_N}{\l-\l_N}}\frac{e_{\a-(k+1/2)\nu_N}(\l)}{e_{\a-(k+1/2)\nu_N}(-1)}
	\end{align*}
	Since $B_{\l_N}(-1)>0$,
	$$
	\lim\limits_{N\to\infty}\arg B_{\l_N}(\l)=0,
	$$
	that is 
	$$
	\lim\limits_{N\to\infty}B_{\l_N}(\l)=1.
	$$
	Hence, if $n_N/N\to x$ for some $x >0$ then 
	$$
	\lim_{N\to\infty}\frac{k^{\a-n_N\nu_N}(\l,\l_N)}{k^{\a-n_N\nu_N}(-1,\l_N)}=\frac{e_{\a-\eta x}(\l)}{e_{\a-\eta x}(-1)}
	$$
	We write
	\begin{align*}
	&\sum_{k=0}^{n-1}\frac{B_{\l_N}(\l)^k\overline{B_{\l_N}(\l_0)^k}}{|B_{\l_N}(-1)|^{2k}}\frac{k^{\a-k\nu_N}(p,p_N)}{k^{\a-k\nu_N}(p_*,p_N)}\overline{\frac{k^{\a-k\nu_N}(p_0,p_N)}{k^{\a-k\nu_N}(p_*,p_N)}}|f_N(k/N)|^2\\
	&=\sum_{k=0}^{n-1}\left(\frac{B_{\l_N}(\l)^k\overline{B_{\l_N}(\l_0)^k}}{|B_{\l_N}(-1)|^{2k}}\frac{k^{\a-k\nu_N}(p,p_N)}{k^{\a-k\nu_N}(p_*,p_N)}\overline{\left(\frac{k^{\a-k\nu_N}(p_0,p_N)}{k^{\a-k\nu_N}(p_*,p_N)}\right)}\right.\\
	&-\left.e^{i(\Theta(\l)-\overline{\Theta(\l_0)})k/N}e^{2\Im\Theta(-1)k/N}\frac{e_{\a-\eta k/N}(\l)}{e_{\a-\eta k/N}(-1)}\overline{\left(\frac{e_{\a-\eta k/N}(\l_0)}{e_{\a-\eta k/N}(-1)}\right)}\right)|f_N(k/N)|^2\\
	&+e^{i(\Theta(\l)-\overline{\Theta(\l_0)})k/N}e^{2\Im\Theta(-1)k/N}\frac{e_{\a-\eta k/N}(\l)}{e_{\a-\eta k/N}(-1)}\overline{\left(\frac{e_{\a-\eta k/N}(\l_0)}{e_{\a-\eta k/N}(-1)}\right)}|f_N(k/N)|^2
	\end{align*}
	Let us now again take a sequence $n_N/N$ increasing to $x$.
	We consider a function $g_N$ with 
	\begin{align*}
	g_N(k/N):=\frac{B_{\l_N}(\l)^k\overline{B_{\l_N}(\l_0)^k}}{|B_{\l_N}(-1)|^{2k}}\frac{k^{\a-k\nu_N}(p,p_N)}{k^{\a-k\nu_N}(p_*,p_N)}\overline{\left(\frac{k^{\a-k\nu_N}(p_0,p_N)}{k^{\a-k\nu_N}(p_*,p_N)}\right)}
	\end{align*}
	and linear in between. This family is equicontinuous, uniformly bounded, and converges pointwise for $k = n_N$ to the continuous function 
	$$
	e^{i(\Theta(\l)-\overline{\Theta(\l_0)}) x}e^{2\Im\Theta(-1)x}\frac{e_{\a-\eta x}(\l)}{e_{\a-\eta x}(-1)}\overline{\left(\frac{e_{\a-\eta x}(\l_0)}{e_{\a-\eta x}(-1)}\right)}.
	$$
	Hence, by Arzela-Ascoli it converges uniformly on $[0,x]$. Thus, the expression in brackets is less than $\e$ for $N$ sufficiently large and 
	$$
	\sum_{k=0}^{n_N-1}|f_N(k/N)|^2<C.
	$$
	By Lemma \ref{eq:convergenceMeasure} the second term converges to 
	$$
	\int_0^xe^{i(\Theta(\l)-\overline{\Theta(\l_0)})\xi}e^{2\Im\Theta(-1)\xi}\frac{e_{\a-\eta \xi}(\l)}{e_{\a-\eta \xi}(-1)}\overline{\left(\frac{e_{\a-\eta \xi}(\l_0)}{e_{\a-\eta \xi}(-1)}\right)}\dd\u(\xi).
	$$
	Thus, to conclude, we have shown that 
	\begin{align}\label{eq:FourierInt2}
			k^{\a}(p,p_0)-e^{i(\Theta(\l)-\overline{\Theta(\l_0)})x}k^{\a-\eta x}(p,p_0)=\int_{0}^{x}e^{i(\Theta(\l)-\overline{\Theta(\l_0)})\xi}e_{\a-\eta \xi}(\l)\overline{e_{\a-\eta \xi}(\l_0)}f_\a(\xi)\dd \xi,
	\end{align}
	with some integrable functions $f_\a$. It remains to show that $f_\a=1$ a.e.
	Recall that
	\begin{align*}
		k^{\a}(p,p_0)=e_{\a}(\l)\frac{m_+^\a(\l)-\overline{m_+^\a(\l_0)}}{\l-\overline{\l_0}}\overline{e_\a(\l_0)}.
	\end{align*}
	Differentiation \eqref{eq:FourierInt2} yields
	$$
	i(\Theta(\l)-\overline{\Theta(\l_0)})k^{\a-\eta x}(p,p_0)+\partial_xk^{\a-\eta x}(p,p_0)=-e_{\a-\eta x}(\l)\overline{e_{\a-\eta x}(\l_0)}f_\a(x).
	$$
	That is,
	\begin{align*}
		\partial_x\log k^{\a-\eta x}(p,p_0)=\frac{\Theta(\l)-\overline{\Theta(\l_0)}}{i}-\frac{\l-\overline{\l_0}}{m_+^{\a-\eta x}-\overline{m_+^{\a-\eta x}}}f_\a(x).
	\end{align*}
	Hence,
	\begin{align*}
		\log\frac{ k^{\a-\eta x}(p,p_0)}{ k^{\a}(p,p_0)}=\int_{0}^{x}\left\{\frac{\Theta(\l)-\overline{\Theta(\l_0)}}{i}-\frac{\l-\overline{\l_0}}{m_+^{\a-\eta \xi}(\l)-\overline{m_+^{\a-\eta \xi}(\l_0)}}f_\a(\xi)\right\}\dd\xi.
	\end{align*}
	Therefore, by using the expansion of $\Theta(\l)$ and $m_+^\beta(\l)$ as $\l\to-\infty$, we obtain that
	\begin{align*}
		\lim_{\l\to-\infty}\frac{\log\left|\frac{ k^{\a-\eta x}(p,p_0)}{ k^{\a}(p,p_0)}\right|}{M(\l)}=\int_0^x(1-f_\a(\xi))\dd\xi.
	\end{align*}
	In \cite{VoYu16} it is shown that 
	$$
			\limsup_{\l\to-\infty}\frac{-\log\left|B(\l)\right|}{M(\l)}=0,
	$$
	if $B$ is a Blaschke product, whose zeros are in $\bbR_+\setminus E$. We have,
	\begin{align*}
		\lim_{\l\to-\infty}\frac{\log\left|\frac{ k^{\a-\eta x}(p,p_0)}{ k^{\a}(p,p_0)}\right|}{M(\l)}&=\frac{1}{2}\lim_{\l\to-\infty}\frac{\log\left|\prod_{j=1}^{\infty}\frac{\l-\l_j}{\l-\tilde\l_j}\right|}{M(\l)}+\frac{1}{2}\lim_{\l\to-\infty}\frac{\log\left|\prod_{j=1}^{\infty}B_{\l_j}(\l)^{\e_j}B_{\tilde{\l_j}}^{\tilde\e_j}(\l)\right|}{M(\l)}\\
		&=\frac{1}{2}\lim_{\l\to-\infty}\frac{\log\left|\prod_{j=1}^{\infty}B_{\l_j}(\l)^{\e_j}B_{\tilde{\l_j}}^{\tilde\e_j}(\l)\right|}{M(\l)}=0
	\end{align*}
	Hence we conclude that for all $x>0$, 
	$$
	\int_0^x(1-f_\a(\xi))\dd\xi=0.
	$$
	The theorem is proved.
	\end{proof}

\subsection{$H^2_\cR$ as a shift invariant subspace of $L^2(L^2(\dd\a))$}

Recall the space $H^2_{\cR}$ defined in Definition \ref{d:h2r}.
Since $\cS_+$ is of Widom type there exists a measurable fundamental set $\bbE_0\subset\bbR$ for $\G'$. The set $\bbE_0$ for the action of $\G'$ is of course related to the fundamental set $\bbE$ for the action of $\G$ by
\begin{align*}
\bbE_0  = \bigcup_{\un\g\in\G/\G'} \un\g(\bbE).
\end{align*}  
Viewing $\cR$ as the quotient $\bbC_+/\Gamma'$, we can equivalently define $H^2_\cR$ as follows:
\begin{definition}
	The space $H^2_{\cR}$ is formed of those analytic functions $F$ in $\bbC_+$ such that:
	\begin{itemize}
		\item[(i)] $F$ is of Smirnov class,
		\item[(ii)] $F\circ \oc\g=F$ for all $\oc\g\in\G'$,
		\item[(iii)]  ${\displaystyle \frac 1 {2\pi}\int_{\bbE_0}|F(p)|^2\dd\theta_\cR(p)<\infty.}$ 
	\end{itemize}
\end{definition}
\begin{remark}
	Condition $(ii)$ means that we consider in fact single-valued functions on the Riemann surface $\cR$. For this reason we may also write $F(p)$ and 
	$$
	\frac 1 {2\pi}\int_{\partial\cR}|F(p)|^2\dd\theta_\cR(p).
	$$
\end{remark}
We denote the reproducing kernels of $H^2_{\cR}$ by $K(p,p_0)=K_{p_0}(p)$.  We have the following fundamental relationship between the space $H^2_\cR$ and the character automorphic Hardy spaces $H^2(\a)$ discussed above:
\begin{theorem}{\cite[Theorem 2.a]{Yud97}}
	$H^2_{\cR}=\int_{\G^*} H^2(\a)d\a$ in the following sense. Let $F(p)\in H^2_{\cR}$, then
	\begin{equation}\label{5may4}
	f(p,\alpha)=\sum_{\un\g\in \G/\G'} F\left(\g(p)\right)e^{-2\pi i\alpha(\un\g)}
	\end{equation}
	belongs to $L^2_{d\a}$ as a function on $\G^*$ and $f(p,\a)\in H^2(\a)$ as a function of $p$ for a.e. $\alpha$. Vice versa, if $f(p,\a)$ is a function with these properties then
	\begin{equation}\label{5may5}
	F(p)=\int_{\G^*} f(p,\a)\dd\a
	\end{equation}
	belongs to $H^2_{\cR}$. Moreover,
	\begin{equation}\label{11may1}
	\|F\|^2=\int_{\G^*}\|f(\cdot,\a)\|^2_{H^2(\a)}\dd\a.
	\end{equation}
	The reproducing kernels are related by
	\begin{align}\label{eq:lemReprod1}
	K(p,p_0)=\int_{\G^*}k^\alpha(p,p_0)\dd\alpha
	\end{align}
	and 
	\begin{align}\label{eq:lemReprod2}
	k^\alpha(p,p_0)=\sum_{\un\g\in\G/\G^*}K(\un\g(p),p_0)e^{-2\pi i\alpha(\un\g)}
	\end{align}
\end{theorem}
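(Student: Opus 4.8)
The plan is to recognize this statement as a direct integral decomposition of the Hilbert space $H^2_\cR$ under the action of the \emph{abelian} group $\G/\G'\cong(\bbZ^\infty)_0$, together with the observation that the Smirnov/analytic structure decomposes compatibly with the Fourier transform on this group. The two essential ingredients are (a) Plancherel's theorem for the dual pair $\G/\G'$ and $\G^*$, recorded in \eqref{5may3}, and (b) the Smirnov maximum principle, which lets the analytic structure survive the $L^2$-limit defining the transform.

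First I would set up the measure-theoretic decomposition on the boundary. Since $\cS_+$ is of Widom type, the action of $\G/\G'$ on $\pd\cR$ is dissipative with fundamental set $\bbE$, and $\dd\t_\cR$ is invariant under this action (by \eqref{19jan3} with $k=0$ the map $\un\g$ shifts $\t_\cR$ by a constant); hence, by the change of variables $q=\un\g p$,
\begin{equation*}
\frac 1{2\pi}\int_{\pd\cR}|F(p)|^2\dd\t_\cR(p)=\frac 1{2\pi}\int_{\bbE}\sum_{\un\g\in\G/\G'}|F(\un\g p)|^2\dd\t_\cR(p).
\end{equation*}
For fixed $p\in\bbE$ the sequence $\{F(\un\g p)\}_{\un\g}$ lies in $\ell^2(\G/\G')$, and applying the Fourier transform \eqref{5may3} fiberwise produces exactly $f(p,\a)$ as in \eqref{5may4}, with Parseval's identity $\sum_{\un\g}|F(\un\g p)|^2=\int_{\G^*}|f(p,\a)|^2\dd\a$. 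Integrating over $\bbE$ and interchanging the order of integration yields the norm formula \eqref{11may1}; the inversion \eqref{5may5} is the corresponding Fourier inversion evaluated at the identity coset.

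Next I would verify the two pointwise structural properties of $f(\cdot,\a)$. The character automorphy $f(\un\g_0 p,\a)=e^{2\pi i\a(\g_0)}f(p,\a)$ is a routine reindexing of the defining sum. The substantive point is that $f(\cdot,\a)$ is analytic of Smirnov class for a.e. $\a$: the partial sums $\sum_{\un\g}F(\un\g p)e^{-2\pi i\a(\g)}$ over finite subsets are finite linear combinations of the Smirnov-class functions $F\circ\un\g$, and they converge in $L^2(\pd\cR)$ for a.e. $\a$; since a bounded-characteristic $L^2$-limit of Smirnov functions is again of Smirnov class (Smirnov maximum principle, cf. the argument in Lemma \ref{l19jan2}), the limit $f(\cdot,\a)$ lies in $H^2(\a)$. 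The converse direction is dual: for an admissible family $f(\cdot,\a)$ the integral $F=\int_{\G^*}f(\cdot,\a)\dd\a$ is, by \eqref{11may1}, a norm-convergent superposition of Smirnov functions and hence of Smirnov class on $\cR$, so $F\in H^2_\cR$.

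Finally, the reproducing-kernel identities follow once the unitary equivalence is in place. Because point evaluation at $p_0$ is continuous on $H^2_\cR$ and on each fiber $H^2(\a)$, and the decomposition intertwines evaluation via $F(p_0)=\int_{\G^*}f(p_0,\a)\dd\a=\int_{\G^*}\langle f(\cdot,\a),k^\a_{p_0}\rangle\dd\a$, uniqueness of the Riesz representative forces $K_{p_0}\in H^2_\cR$ to decompose into the fiberwise kernels $\a\mapsto k^\a_{p_0}$; this gives \eqref{eq:lemReprod1}, and applying the forward transform \eqref{5may4} to $F=K_{p_0}$ gives \eqref{eq:lemReprod2}. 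The main obstacle is the Smirnov-class step above: one must ensure that the transform, defined \emph{a priori} only on $L^2$ boundary values, carries the analytic/Smirnov structure of $H^2_\cR$ into genuine Hardy-space fibers — this is precisely where the Widom condition (guaranteeing nontriviality of every $H^2(\a)$) and the Smirnov maximum principle are indispensable.
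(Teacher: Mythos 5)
The paper does not actually prove this statement — it is quoted with the citation \cite[Theorem 2.a]{Yud97} and no argument is given — so there is no internal proof to compare against. Your proposal (dissipativity of the $\G/\G'$-action and invariance of $\dd\t_{\cR}$ to reduce to the fundamental set $\bbE$, fiberwise Plancherel for the dual pair $(\G/\G',\G^*)$ for the $L^2$-isometry and the inversion/norm formulas, the Smirnov maximum principle to carry the analytic structure through the $L^2$-limit, and the intertwining of point evaluations for the kernel identities) is the standard proof of this direct-integral decomposition and is, in substance, the argument of the cited reference. The only step that deserves slightly more care than you give it is the passage from convergence of the partial sums in $L^2(\bbE\times\G^*)$ (which Plancherel gives) to convergence in $L^2(\bbE,\dd\t_{\cR})$ for a.e.\ fixed $\a$ — a Fubini-plus-subsequence argument — combined with the closedness of $H^2(\a)$ inside $L^2(\bbE,\dd\t_{\cR})$; you correctly flag this as the main obstacle and the Smirnov maximum principle is indeed the right tool, so I see no genuine gap.
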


To a function $F\in H^2_{\cR}$ we associate the vector function
\begin{align}\label{eq:IsoImbedding}
	\hat f(\vt)=\{f_{\un\g}(\vt)\}_{\un \g\in\G/\G'},\quad\text{ where }f_{\un\g}(\vt)=F(p), \theta_{\cR}(p)=\vt, p\in\Pi_{\un \g}.
\end{align}
By $L^2(\ell_2(\G/\G'))$ we denote the space of $\ell_2(\G/\G')$ -valued functions $ f(\vt)\in\ell^2_{\G/\G'}$, with the norm
$$
\|\hat{f}\|^2=\frac 1{2\pi}\int_{\bbR}\|f(\vt)\|_{\ell^2_{\G/\G'}}^2\dd\vt.
$$
\begin{lemma}\label{lem:defInvSubspace}
	The mapping 
	\begin{align*}
	F\mapsto \hat f
	\end{align*}
	defined in \eqref{eq:IsoImbedding} maps $H^2_\cR$ isometrically into $L^2(\ell^2_{\G/\G'})$. That is,
	$$
	M:=\{\hat f: F\in H^2_\cR\}\subset L^2(\ell^2_{\G/\G'}).
	$$
\end{lemma}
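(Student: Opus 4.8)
The plan is to reduce the claimed isometry to the dissipativity of the $\G/\G'$-action on $\pd\cR$ together with the comb description of $\cR$ in the $\t_\cR$-coordinate; no analyticity is needed beyond guaranteeing boundary values. First I would note that every $F\in H^2_\cR$ is of Smirnov class, hence has nontangential boundary values a.e.\ on $\pd\cR$. Thus for each $\un\g\in\G/\G'$ the value $f_{\un\g}(\vt)=F(p)$ (with $p\in\Pi_{\un\g}$, $\t_\cR(p)=\vt$) is defined and measurable for a.e.\ $\vt\in\bbR$, so that $\hat f$ from \eqref{eq:IsoImbedding} is a well-defined measurable $\ell^2_{\G/\G'}$-valued object and the map $F\mapsto\hat f$ makes sense before we even check integrability.

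The heart of the argument is a measure-theoretic bookkeeping of $\pd\cR$ in the $\t_\cR$-picture. Here I would use the comb description of $\cR$: the surface is tiled by the sheets $\Pi_{\un\g}$, the needle slits are glued and hence become interior, so that only the bases of the combs survive as boundary, giving the dissipative decomposition $\pd\cR=\bigsqcup_{\un\g\in\G/\G'}\un\g\bbE$ with fundamental set $\bbE\cong\pd\cS_+$. On the base of the zero sheet $\Pi_{\un\iota}$ the map $\t_\cR$ is a bijection onto $\bbR$ (the top of $E$ covering $\bbR_+$, the bottom covering $\bbR_-$) carrying $\dd\t_\cR$ to Lebesgue measure $\dd\vt$; by \eqref{19jan3} with $k=0$, $\t_\cR(\un\g p)=\t_\cR(p)+2\pi\eta(\g)$, so the same measure-preserving bijection holds verbatim on every translate $\un\g\bbE$. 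A change of variables on each sheet then yields
\begin{align*}
\int_\bbR|f_{\un\g}(\vt)|^2\,\dd\vt=\int_{\un\g\bbE}|F(p)|^2\,\dd\t_\cR(p),\qquad \un\g\in\G/\G'.
\end{align*}

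Summing over $\un\g$ and invoking dissipativity in the form $\sum_{\un\g}\int_{\un\g\bbE}|F|^2\,\dd\t_\cR=\int_{\pd\cR}|F|^2\,\dd\t_\cR$, Tonelli's theorem gives
\begin{align*}
\|\hat f\|^2=\frac{1}{2\pi}\int_\bbR\sum_{\un\g}|f_{\un\g}(\vt)|^2\,\dd\vt=\frac{1}{2\pi}\int_{\pd\cR}|F(p)|^2\,\dd\t_\cR(p)=\|F\|^2.
\end{align*}
Finiteness of the right-hand side forces $\sum_{\un\g}|f_{\un\g}(\vt)|^2<\infty$ for a.e.\ $\vt$, so $\hat f$ genuinely lies in $L^2(\ell^2_{\G/\G'})$, and the displayed equality is exactly the asserted isometry; since only norms are compared, surjectivity onto $M$ is not required. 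The one step demanding care is the second one: checking that $\t_\cR$ restricts to a measure-preserving bijection of each sheet's boundary onto $(\bbR,\dd\vt)$, i.e.\ that gluing along the needle slits removes precisely the would-be overlaps while leaving each base a free boundary. This is where the explicit comb/uniformization picture of $\cR$ and the identification $\bbE\cong\pd\cS_+$ carry the weight; the remainder is a routine Fubini--Tonelli rearrangement.
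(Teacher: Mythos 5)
Your proposal is correct and follows essentially the same route as the paper: decompose the boundary $\pd\cR$ (equivalently $\bbE_0$) into the translates $\un\g(\bbE)$ of the fundamental set, use the $\G$-invariance of $\dd\t_\cR$ (your comb/translation picture) to identify $\int_{\un\g\bbE}|F|^2\dd\t_\cR$ with $\int_{\bbR}|f_{\un\g}(\vt)|^2\dd\vt$, and sum via Tonelli to obtain the isometry. The paper's proof is just a terser version of this same computation, so no further comparison is needed.
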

\begin{proof}
	Since $\dd\t_\cR$ is invariant w.r.t. the action of the group $\G$, we obtain
	\begin{align*}
	\frac 1{2\pi}\int_{\bbE_0}|F(p)|^2\dd\t_{\cR}(p)=\frac 1{2\pi}\sum_{\un\g\in\G/\G'}\int_{\un\g(\bbE)}|F(p)|^2\dd\t_\cR(p)\\
	=\frac 1{2\pi}\sum_{\un\g\in\G/\G'}\int_{\bbR} |\hat f_{\un\g}(\vt)|^2\dd\vt=\frac 1{2\pi}\int_{\bbR}\|f_{\un\g}(\vt)\|^2_{\ell^2_{\G/\G'}}\dd\vt.
	\end{align*}
\end{proof}

Thus Lax-Halmos Theorem, see e.g. \cite[p. 17]{Nik} suggests an existence of the following representation
$H^2_{\cR}=\bTh H^2(\cE) $, where $\cE$ is a subspace of $\ell^2_{\G/\G'}$ and $\bTh$ a measurable operator valued function $\bbR$ whose values $\bTh(\xi)$ are isometric operators on $\cE$. Below we present an explicit form of such a representation (iii).
But, before to proceed we note that   $\ell^2_{\G/\G'}$ and $L^2_{\dd\a}$ are unitarily equivalent, see \eqref{5may3}. We will show that  one of them can be chosen as the scale space $\cE$.  For definitiveness, we denote $\cE=L^2_{d\a}$.

\begin{lemma}\label{lem:IsometryReprod}
		Let 
		$
		\mathfrak{V}:H^2_{\cR}\to H^2(\cE)
		$
		be defined by
		\begin{align*}
		(\mathfrak{V}K_{p_0})(\vt,\alpha)=\frac{i\overline{e_{\cR}(p_0,\a)}}{\vt-\overline{\vt_0}},
		\end{align*}
		where $\theta_{\cR}(p_0)=\vt_0$. Then $\mathfrak{V}$ defines an isometry on $H^2_{\cR}$. 
		\end{lemma}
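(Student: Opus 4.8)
The plan is to exploit that the reproducing kernels $\{K_{p_0}\}_{p_0\in\cR}$ form a total family in $H^2_\cR$, so that a densely defined linear map preserving inner products on this family extends uniquely to an isometry. Thus it suffices to (a) check that each $\mathfrak{V}K_{p_0}$ actually lies in $H^2(\cE)$, and (b) verify that $\mathfrak{V}$ reproduces the Gram matrix of the kernels. For (a), fix $p_0$ and set $\vt_0=\t_\cR(p_0)$, which satisfies $\Im\vt_0>0$ since $\t_\cR$ takes values in $\Pi_+$. Regarded as a function of $\vt$, the scalar factor $(\vt-\overline{\vt_0})^{-1}$ is a multiple of the Cauchy kernel of the scalar Hardy space $H^2(\bbC_+)$, hence of Smirnov class and square integrable against $\tfrac{1}{2\pi}\dd\vt$; its $\cE$-valued coefficient $\overline{e_\cR(p_0,\cdot)}$ lies in $\cE=L^2_{\dd\a}$ precisely because $\int_{\G^*}|e_\cR(p_0,\a)|^2\dd\a<\infty$, which follows from $K(p_0,p_0)<\infty$ and the representation of $K$ in Theorem \ref{t21jan1}. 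Hence $\mathfrak{V}K_{p_0}\in H^2(\cE)$.

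For (b), writing $\vt_j=\t_\cR(p_j)$ and using that $\vt$ is real on the boundary (so that in the $\cE$-inner product $i\,\overline{i}=1$ and $\overline{\vt-\overline{\vt_1}}=\vt-\vt_1$), I would compute
\[
\langle \mathfrak{V}K_{p_0},\mathfrak{V}K_{p_1}\rangle_{H^2(\cE)}=\frac{1}{2\pi}\int_{\bbR}\int_{\G^*}\frac{e_\cR(p_1,\a)\,\overline{e_\cR(p_0,\a)}}{(\vt-\overline{\vt_0})(\vt-\vt_1)}\,\dd\a\,\dd\vt.
\]
Interchanging the two integrations by Fubini, the inner $\vt$-integral is evaluated by residues: the integrand decays like $\vt^{-2}$, and since $\overline{\vt_0}$ lies in the lower half-plane while $\vt_1$ lies in the upper half-plane, closing the contour upward picks up only the pole at $\vt=\vt_1$, so that $\int_{\bbR}(\vt-\overline{\vt_0})^{-1}(\vt-\vt_1)^{-1}\dd\vt=2\pi i/(\vt_1-\overline{\vt_0})$. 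Substituting back cancels the factor $\tfrac{1}{2\pi}$ and gives
\[
\langle \mathfrak{V}K_{p_0},\mathfrak{V}K_{p_1}\rangle_{H^2(\cE)}=\frac{i}{\vt_1-\overline{\vt_0}}\int_{\G^*}e_\cR(p_1,\a)\,\overline{e_\cR(p_0,\a)}\,\dd\a,
\]
which is exactly $K(p_1,p_0)$ by the representation \eqref{20jan6} of the reproducing kernel in Theorem \ref{t21jan1}. Since the reproducing property gives $\langle K_{p_0},K_{p_1}\rangle_{H^2_\cR}=K_{p_0}(p_1)=K(p_1,p_0)$, the two Gram matrices agree and $\mathfrak{V}$ extends to an isometry on $H^2_\cR$.

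The residue evaluation and the appeal to Fubini are routine; the genuine content is carried entirely by Theorem \ref{t21jan1}, so the point demanding care is the bookkeeping that makes the contour integral reproduce \eqref{20jan6} verbatim --- in particular the orientation conventions (which half-plane each of $\vt_0,\vt_1$ occupies) and the normalizing constant $\tfrac{1}{2\pi}$ in the $H^2(\cE)$ norm. The remaining subtlety is simply the membership statement in step (a): one must know $\overline{e_\cR(p_0,\cdot)}\in L^2_{\dd\a}$ both to place $\mathfrak{V}K_{p_0}$ in the model space and to license the interchange of integrals, and this is exactly the finiteness of the diagonal reproducing kernel established earlier.
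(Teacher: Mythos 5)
Your proof is correct and follows essentially the same route as the paper: both reduce the claim to matching the Gram matrices of the reproducing kernels via the factorized representation $K(p,p_0)=\frac{i}{\t_\cR(p)-\overline{\t_\cR(p_0)}}\int_{\G^*}e_\cR(p,\a)\overline{e_\cR(p_0,\a)}\,\dd\a$, the only cosmetic difference being that you quote this formula from Theorem \ref{t21jan1} and evaluate the scalar Cauchy-kernel pairing by residues, whereas the paper re-derives it inline from \eqref{eq:lemReprod1} and \eqref{eq:ContFour} using Fubini and the shift-invariance of $\dd\a$. Your additional checks (membership of $\mathfrak{V}K_{p_0}$ in $H^2(\cE)$ and totality of the kernels) are sound and only make explicit what the paper leaves implicit.
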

\begin{proof}
		Due to \eqref{eq:lemReprod1}, \eqref{eq:ContFour},  Fubini's theorem and the shift-invariance of $\dd\alpha$ we see that
		\begin{align*}
		\langle K_{p_0},K_p\rangle_{H^2_\cR}&=K(p,p_0)\\
		&=\int_{\G^*}\int_0^\infty e^{i(\t_\cR(p)-\overline{\vt_0})x}e_{\alpha-\eta x}(\l_\cR(p))\overline{e_{\alpha-\eta x}(\l_\cR(p_0))}\dd x\dd\alpha\\
		&=\int_{\G^*}e_{\alpha}(\t_\cR(p))\overline{e_{\alpha}(\t_\cR(p_0))}\dd\alpha\int_0^\infty e^{i(\t_\cR(p)-\overline{\vt_0})x}\dd x\\
		&=\frac{i}{\t_\cR(p)-\overline{\vt_0}}\langle e_\alpha(\t_\cR(p)),e_\alpha(\t_\cR(p_0))\rangle_{L^2_{\dd\alpha}}\\
		&=\langle\frac{i}{\vt-\overline{\vt_0}},\frac{i}{\vt-\overline{\t_\cR(p)}}\rangle_{H^2}\langle e_\alpha(\t_\cR(p)),e_\alpha(\t_\cR(p_0))\rangle_{L^2_{\dd\alpha}}\\
		&=\langle \mathfrak{V}K_{p_0},\mathfrak{V}K_p\rangle_{H^2(\cE)}.
		\end{align*}
\end{proof}

\begin{itemize}
\item[(i)]
Recall that
\begin{equation}\label{4jan1}
\cF_1: H^2_{\cR}\to\int_{\G^*} H^2(\a)\dd\a\ \text{ such that }\ (\cF_1 F)(p,\a)=\sum_{\un\g\in\G/\G'} F(\un\g p)e^{-2\pi i\a(\g)}
\end{equation}
is a unitary operator.
\item[(ii)]
For $\cE=L^2_{d\a}$ we define $\cF_2:\int_{\G^*} H^2(\a)\dd\a\to L^2_+(\cE)$ making Fourier decomposition for any individual
$f(p,\a)\in H^2(\a)$, i.e.,
\begin{equation}\label{4jan2}
f(p,\a)=\int_0^{\infty}e^{i\t_{\cR}(p)x}e_{\cR}(p,\a-\eta x)\hat f(x,\a) \dd x, \quad \hat f(x,a)\in L^2_+.
\end{equation}
\end{itemize}
We note that the operator $\hat g(x,\a)=(\cU_\eta \hat f)(x,\a):=\hat f(x,\a+\eta x)$ acts unitary in $L_+^2(\cE)$. Thus we can define an alternative representation
\begin{equation}\label{4jan3}
g(p,\a)=\int_0^{\infty}e^{i\t_{\cR}(p)x}e_{\cR}(p,\a-\eta x)\hat g(x,\a-\eta x) \dd x, \quad \hat g(x,\a)\in L^2_+(\cE).
\end{equation}
\begin{itemize}
\item[(iii)]
We get the unitary operator $\cF_3: H^2(\cE)\to H^2_{\cR}$ acting as
\begin{equation}\label{4jan30}
(\cF_3 g)(p)=\int_{\G^*}e_{\cR}(p,\a)g(\t_{\cR}(p),\a)\dd\a, \quad g(\vt,\a)=\int_{0}^\infty e^{i\vt x} \hat g(x,\a) \dd x.
\end{equation}
\end{itemize}


\begin{remark} We claim that for $\Im\vt_0>0$
\begin{equation}\label{12jan3}
L^2_{d\a}=\cE=\clos\left\{\sum_{\# \{\un\g\}<\infty} C_{\un\g} \overline{e_{\cR}(p_{\un\g},\a)}: \t_{\cR}(p_{\un\g})=\vt_0\right\}.
\end{equation}
Indeed,
$\frac{\vt-\vt_0}{\vt-\overline{\vt_0}} H^2(\cE)$ is mapped unitary on $w_0 H^2_{\cR}$, where $w_0(p)=\frac{\t_{\cR}(p)-\vt_0}{\t_{\cR}(p)-\overline{\vt_0}}$. It is clear that 
$$
 H^2(\cE)\ominus\frac{\vt-\vt_0}{\vt-\overline{\vt_0}} H^2(\cE)=\left\{\frac{f}{\vt-\overline{\vt_0}}:\ f\in \cE\right\}.
$$
On the other hand, since $w_0(p)$ is a Blaschke product
$$
 H^2_{\cR}\ominus w_0 H^2_{\cR}=\clos\left\{\sum_{\# \{\un\g\}<\infty} C_{\un\g} \cK_{p_{\un\g}}(p): \t_{\cR}(p_{\un\g})=\vt_0\right\}.
$$
Since they are unitary equivalent, by Lemma \ref{lem:IsometryReprod}, we have \eqref{12jan3}.

\end{remark}

\section{Generalized Abelian integrals and the KdV hierarchy}

It is well known that in the finite gap setting the direction of the time shift is generated by Abelian integrals of the second kind, see e.g. \cite{GeHo03}. The following functions will serve as \textit{generalized Abelian integrals} on $\cR$; cf. \cite[Theorem 5]{VoYu16} for the case $M=M_0$.
\begin{proposition}
\label{pr:genabint}
	Let $k\in\bbN$, $v_k(\l)=\Im\l^{k+\frac{1}{2}}$ and assume that 
	\begin{align}
	\label{eq:halfmomentcondn}
	\int_{\bbR_+\setminus E}G(i,\xi)\dd\xi^{k+\frac{1}{2}}<\infty.
	\end{align}
	Then
	\begin{align*}
		M_k(\l)=v_k(\l)+\frac{1}{\pi}\int_{\bbR_+\setminus E}G(\l,\xi)\dd\xi^{k+\frac{1}{2}}
	\end{align*}
	defines a harmonic function in $\cS_+$. 
\end{proposition}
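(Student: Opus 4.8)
The plan is to write $M_k=v_k+u_k$, where $v_k(\l)=\Im\l^{k+\frac12}$ and $u_k(\l):=\frac1\pi\int_{\bbR_+\setminus E}G(\l,\xi)\,\dd\xi^{k+\frac12}$, and then to show that the distributional Laplacian $\Delta M_k$ vanishes on $\cS_+$; since both summands will be seen to be continuous there, Weyl's lemma promotes this to genuine harmonicity. Throughout, write $\dd\mu_k:=\frac1\pi\,\dd\xi^{k+\frac12}=\frac1\pi(k+\tfrac12)\xi^{k-\frac12}\,\dd\xi$ for the positive measure carried by the gaps $\bbR_+\setminus E\subset\cS_+$; the entire point is that the charge of $\mu_k$ is calibrated to cancel the jump of $v_k$ across those gaps.

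First I would establish that $u_k(\l)$ is finite, locally bounded, and continuous for every $\l\in\cS_+$. The hypothesis \eqref{eq:halfmomentcondn} guarantees finiteness only at $\l=i$, so I split the integral. For $\xi$ outside a fixed neighborhood of a given compact $K\subset\cS_+$, the functions $G(\cdot,\xi)$ are positive and harmonic along a common Harnack chain joining $K$ to $i$, whence $G(\l,\xi)\le C_K\,G(i,\xi)$ uniformly for $\l\in K$; this portion is integrable against $\dd\mu_k$ by \eqref{eq:halfmomentcondn}. For $\xi$ near $\l$ (relevant only when $\l$ lies in a gap) I use the estimate $G(\l,\xi)\le-\log|\l-\xi|+O(1)$, valid by regularity (R), together with the local integrability of the logarithm in one variable to bound $\int-\log|\l-\xi|\,\dd\mu_k(\xi)<\infty$. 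These two bounds give local uniform convergence, and continuity of $G$ up to the boundary (condition (R)) yields continuity of $u_k$.

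Next I compute the two distributional Laplacians on $\cS_+$. Since $\Delta_\l G(\l,\xi)=-2\pi\delta_\xi$, differentiating under the integral (justified by the local uniform convergence above via Fubini) gives $\Delta u_k=-2\pi\mu_k$ in $\mathcal D'(\cS_+)$; in particular $u_k$ is harmonic off the gaps. On the other hand $v_k$ is harmonic on $\bbC\setminus\bbR_+$ and continuous across $\bbR_+$ with boundary value $0$, so its only distributional contribution is the single layer produced by the jump of its normal derivative. Computing from $\partial_y v_k=\Re\bigl((k+\tfrac12)\l^{k-\frac12}\bigr)$ yields $\partial_y v_k(\xi+i0)-\partial_y v_k(\xi-i0)=2(k+\tfrac12)\xi^{k-\frac12}$, hence $\Delta v_k=2(k+\tfrac12)\xi^{k-\frac12}\,\dd\xi=2\pi\mu_k$ along the gaps (the part of this layer sitting on $E$ is irrelevant, as $E\notin\cS_+$). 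Adding the two identities gives $\Delta M_k=2\pi\mu_k-2\pi\mu_k=0$ in $\mathcal D'(\cS_+)$, and Weyl's lemma combined with the continuity of $M_k$ concludes the proof.

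I expect the main obstacle to be the rigorous passage from the single-point bound \eqref{eq:halfmomentcondn} to local finiteness of $u_k$ — namely, controlling $G(\l,\xi)$ uniformly by $G(i,\xi)$ through a Harnack chain while separately absorbing the integrable logarithmic singularity — together with the clean verification that $\Delta u_k=-2\pi\mu_k$ with no spurious boundary terms. Once these are in place, the matching jump computation for $v_k$ is routine.
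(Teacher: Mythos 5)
Your argument is correct and follows essentially the same route as the paper: both decompose $M_k$ into $v_k$ plus the Green potential, observe harmonicity off $\bbR_+$, and check that the jump $2(k+\tfrac12)\xi^{k-\frac12}$ of $\partial_y v_k$ across each gap is exactly cancelled by the distributional Laplacian $-2\,\dd\xi^{k+\frac12}$ of the potential term. Your additional verifications (local integrability via a Harnack chain plus the logarithmic singularity, and the explicit appeal to Weyl's lemma) are details the paper leaves implicit, but the core computation is identical.
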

\begin{proof}
	Clearly $M_k(\l)$ is harmonic in $\bbC\setminus\bbR_+$. On the gaps $(a_j,b_j)$ we have that $v_k(\l+i0)=v_k(\l-i0)$ but the derivative 
	$
	\frac{\partial v_k}{\partial y}
	$
	has a jump. Due to the Cauchy Riemann equation we find that the generalized Laplacian of $v_k$ is given by
	$$
	\Delta v_k(\xi)=2\chi_{\bbR_+\setminus E}(\xi)\dd\xi^{k+\frac{1}{2}}.
	$$
	 Hence, $M_k$ is harmonic in $\cS_+$. 
\end{proof}

\begin{remark}
Note that the condition \eqref{eq:halfmomentcondn} follows immediately from the assumptions (PW) and ($k$-GLC).
\end{remark}

Note that in particular $M_0(\l)$ is a positive harmonic function which, since we assumed that $E$ is Dirichlet regular, vanishes on the boundary. Hence, $M_0 = M$ defines the Martin function of $\cS_+$.  Let $\Theta_k$ define the analytic function in the domain such that $\Im \Theta_k=M_k$. 
\begin{definition}
	Let $\Theta_k$ be defined as above. We define the generalized normalized Abelian integral of order $k$ as the function on $\cR$ of the form 
	$$
	\t_{\cR}^{(k)}=\Theta_k\circ \bl.
	$$
	By $\eta^{(k)}$ we denote the additive character generated by this function, i.e.,
	\begin{equation}\label{16jan1}
\t_{\cR}^{(k)}(\un\g p)=\t_{\cR}^{(k)}(p)+2\pi\eta^{(k)}(\g), \quad \g\in\G.
\end{equation}
\end{definition}
\begin{remark}
Let $\G_0$ be a normal subgroup of $\G$ which contains $\G'$.  The Riemann surface $\cR=\cR_{\G_0}=\bbC_+/\G_0$ is also an abelian covering of $\O=\bbC\setminus E$. The group $\G/\G_0$ acts on this surface.  Further, if $\a_0\in\G^*$ we can define $\oc\a_0=\a_0|\G_0$. Evidently, the collection  
$\Xi_0:=\{\a\in\G^*:\ \a|\G_0=\oc\a_0\}$ coincides with $(\G/\G_0)^*$ up to a shift by $\a_0$. Thus $\dd\a$ can be naturally defined on $\Xi_0$ by
the Haar measure on $(\G/\G_0)^*$. Particularly we are interested in 
$$
\G_0=\G_0(\eta, \eta^{(k)})=\{\g\in\G:\ \eta^{(k)}(\g)=0,\ \eta(\g)=0\}.
$$
$\t_{\cR}^{(k)}(p)$ can be treated as a function on this surface with the group action \eqref{16jan1}.  Moreover, the two dimensional  flow
$$
\a\mapsto\a -x\eta-t \eta^{(k)},\quad \a\in\Xi_0,
$$
in this case, is ergodic with respect to $\dd \a$.
\end{remark}

\subsection{Relation to the finite-gap case}
Let 
$$
E_N=\bbR_+\setminus \cup_{j=1}^N(a_j,b_j), \quad \cS_{+,N}=\bbC\setminus E_N
$$
It is convenient to understand the Hardy space $H^2_{\cS_+}(\a)$  as a space of multivalued functions  having harmonic majorant in the domain, see Introduction and Lemma \ref{l19jan2}.
\begin{lemma}
Let $\a_N=\a|\pi_1(\cS_{+,N})$ and $ k_{\cS_{+,N}}^{\a_N}(\l,\l_0)$ be the reproducing kernel in  $H^2_{\cS_{+,N}}(\a_N)$. Then
$$
k_{\cS_{+,N}}^{\a_N}(\l,\l_0)\to k_{\cS_+}^\a(\l,\l_0)
$$
on compact subsets in $\cS_+$.
\end{lemma}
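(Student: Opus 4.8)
The plan is to reduce the claim to the convergence of the \emph{diagonal} values $k^{\a_N}_{\cS_{+,N}}(\l_0,\l_0)$ and then to upgrade to the full kernel by a uniqueness argument. Throughout I use the extremal characterization of the reproducing kernel: for the norm \eqref{18jan0},
$$
k^{\a}_{\cS_+}(\l_0,\l_0)=\sup\{|f(\l_0)|^2:\ f\in H^2_{\cS_+}(\a),\ \|f\|_{H^2_{\cS_+}(\a)}\le 1\},
$$
with the supremum attained by the \emph{unique} extremal function $k^\a_{\cS_+,\l_0}/\sqrt{k^\a_{\cS_+}(\l_0,\l_0)}$ (normalized to be positive at $\l_0$), and likewise for each finite-gap domain $\cS_{+,N}$, which, having finitely many gaps, is regular of Widom type with DCT, so its kernels are well defined and nontrivial. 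The key geometric facts are that $E_N\supseteq E$ forces the nesting $\cS_{+,N}\subseteq\cS_{+,N+1}\subseteq\cdots$ with $\bigcup_N\cS_{+,N}=\cS_+$ and $-1,\l_0\in\cS_{+,N}$ for all large $N$, and that the inclusion $\cS_{+,N}\hookrightarrow\cS_+$ carries a loop around the $j$-th gap $(j\le N)$ to the corresponding loop in $\cS_+$; hence the restriction of a character-automorphic function with character $\a$ is character-automorphic on $\cS_{+,N}$ with character exactly $\a_N=\a|\pi_1(\cS_{+,N})$.

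Write $K_N:=k^{\a_N}_{\cS_{+,N}}(\l_0,\l_0)$ and $K_\infty:=k^\a_{\cS_+}(\l_0,\l_0)$. The first step is monotonicity, obtained from domain monotonicity of least harmonic majorants: writing $\mathrm{LHM}_\Omega(s)$ for the least harmonic majorant of $s$ in $\Omega$, if $\Omega_1\subseteq\Omega_2$ then $\mathrm{LHM}_{\Omega_2}(|f|^2)|_{\Omega_1}$ majorizes $|f|^2$ on $\Omega_1$, so evaluating least majorants at $-1$ gives $\|f\|_{H^2_{\Omega_1}}\le\|f\|_{H^2_{\Omega_2}}$. Restricting the $\cS_+$-extremal function to $\cS_{+,N}$ thus yields a competitor of norm $\le 1$ and value $\sqrt{K_\infty}$ at $\l_0$, so $K_N\ge K_\infty$; restricting the $\cS_{+,N+1}$-extremal function to $\cS_{+,N}$ gives $K_N\ge K_{N+1}$. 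Hence $K_N$ decreases to some limit $L\ge K_\infty$.

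For the reverse inequality I pass to the limit with $\phi_N:=k^{\a_N}_{\cS_{+,N},\l_0}/\sqrt{K_N}$, normalized by $\|\phi_N\|=1$, $\phi_N(\l_0)=\sqrt{K_N}$. On a fixed compact $K\subset\cS_+$ these are eventually defined, and since $|\phi_N|^2$ has a harmonic majorant of value $\le 1$ at $-1$, Harnack's inequality on a fixed subdomain containing $K$ and $-1$ bounds them locally uniformly; by Montel's theorem, after lifting to the cover as in Proposition \ref{pr:h2corresp}, a subsequence converges locally uniformly to a multivalued analytic $\phi$ on $\cS_+$, whose monodromy around each gap is the stable limit $\a$ and with $\phi(\l_0)=\sqrt{L}$. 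The crucial point is lower semicontinuity of the norm: for fixed $M$ and all $N\ge M$ one has $\mathrm{LHM}_{\cS_{+,M}}(|\phi_N|^2)(-1)\le\|\phi_N\|^2_{H^2_{\cS_{+,N}}}\le1$; letting $N\to\infty$ on the fixed domain $\cS_{+,M}$ (Fatou for the harmonic-measure representation of the least majorant) and then $M\to\infty$ (monotone exhaustion of the least majorant) gives $\|\phi\|^2_{H^2_{\cS_+}(\a)}\le1$. Thus $\phi$ is an admissible competitor with $|\phi(\l_0)|^2=L$, so $K_\infty\ge L$ and therefore $L=K_\infty$.

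Finally, uniqueness of the $\cS_+$ extremal function forces $\phi=k^\a_{\cS_+,\l_0}/\sqrt{K_\infty}$; since every subsequence of $\{\phi_N\}$ has a further subsequence with this same limit, the whole sequence converges, and multiplying by $\sqrt{K_N}\to\sqrt{K_\infty}$ yields $k^{\a_N}_{\cS_{+,N}}(\cdot,\l_0)\to k^\a_{\cS_+}(\cdot,\l_0)$ locally uniformly on $\cS_+$, which is the assertion. The main obstacle is the norm step in the third paragraph: the norms \eqref{18jan0} live on different domains, so the lower semicontinuity and the interchange of the limits $N\to\infty$ and $M\to\infty$ for the least harmonic majorants must be handled carefully; the remaining ingredients (extremal characterization, monotonicity, normal families, uniqueness) are soft.
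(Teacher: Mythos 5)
Your proof is correct, and its overall strategy coincides with the paper's: both reduce the lemma to convergence of the diagonal values $k^{\a_N}_{\cS_{+,N}}(\l_0,\l_0)\to k^{\a}_{\cS_+}(\l_0,\l_0)$, both get the inequality $k^{\a_N}_{\cS_{+,N}}(\l_0,\l_0)\ge k^{\a}_{\cS_+}(\l_0,\l_0)$ by restricting the full-domain kernel (equivalently, its normalized extremal) to the subdomain and using monotonicity of least harmonic majorants, and both get the reverse inequality by a compactness-plus-lower-semicontinuity argument. The differences are in the mechanics. For the compactness step the paper extracts a subsequential limit of $k^{\a_N}_{\cS_{+,N}}(\bl^N(z),\l_0)$ in $H^2$ of the universal cover, which implicitly requires handling the varying uniformizations $\bl^N$; you apply Montel directly on the exhaustion $\cS_{+,M}\uparrow\cS_+$ and spell out the lower semicontinuity of the least-harmonic-majorant norm (first $N\to\infty$ on a fixed $\cS_{+,M}$, then $M\to\infty$), a point the paper asserts in a single line, so your treatment of this step is if anything more complete. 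For the upgrade from diagonal convergence to convergence of the kernels themselves, the paper uses the reproducing property to expand
\begin{equation*}
0\le \bigl\| k^\a_{\cS_+,\l_0}-k^{\a_N}_{\cS_{+,N},\l_0}\bigr\|^2_{H^2_{\cS_{+,N}}(\a_N)} \le k^{\a_N}_{\cS_{+,N}}(\l_0,\l_0)-k^{\a}_{\cS_+}(\l_0,\l_0),
\end{equation*}
using that the cross term equals $k^\a_{\cS_+}(\l_0,\l_0)$ and that $\|k^\a_{\cS_+,\l_0}\|^2_{H^2_{\cS_{+,N}}(\a_N)}\le k^\a_{\cS_+}(\l_0,\l_0)$; diagonal convergence then gives norm convergence of the difference, hence locally uniform convergence, with a quantitative rate. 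You instead invoke uniqueness of the normalized extremal function together with a subsequence argument. Both routes are valid; the paper's identity is slightly slicker and avoids any discussion of uniqueness, while yours makes the extremal problem underlying the kernel more transparent. No gaps.
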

\begin{proof}
$k_{\cS_+}^\a(\l,\l_0)$ has a harmonic majorant in $\cS_{+,N}$ and therefore belongs to $H^2_{\cS_{+,N}}(\a_N)$, moreover
$$
\|k_{\cS_+,\l_0}^\a\|^2_{H^2_{\cS_{+,N}}(\a_N)}\le \|k_{\cS_+,\l_0}^\a\|^2_{H^2_{\cS_+}(\a)}=k_{\cS_+}^\a(\l_0,\l_0).
$$
We have
\begin{align}
0\le\| k_{\cS_+,\l_0}^\a-k_{\cS_{+,N},\l_0}^{\a_N}\|_{H^2_{\cS_{+,N}}(\a_N)}^2=k_{\cS_{+,N}}^{\a_N}(\l_0,\l_0)-2k_{\cS_+}^\a(\l_0,\l_0)+\|k_{\cS_+,\l_0}^\a\|^2_{H^2_{\cS_{+,N}}(\a_N)}
\nonumber\\
\le k_{\cS_{+,N}}^{\a_N}(\l_0,\l_0)-k_{\cS_+}^{\a}(\l_0,\l_0). \label{11jan1}
\end{align}

On the other hand the family $\{k_{\cS_{+,N}}^{\a_N}(\bl^N(z),\l_0)\}_{N}$ is compact in the standard $H^2$ (w.r.t. the harmonic measure). We choose a subsequence $N_j$ so that
$$
f(z)=\lim_{N_j\to\infty} k_{\cS_{+,N_j}}^{\a_{N_j}}(\bl^{N_j}(z),\l_0).
$$
We note that this function can be understood as an element of $H^2_{\cS_+}(\a)$, i.e., $f(z)=g(\bl(z))$, where $g\in H^2_{\cS_+}(\a)$ and moreover
$$
\|f\|^2_{H^2}=\| g\|^2_{H^2_{\cS_+}(\a)}\le \lim_{N_j\to\infty} \| k_{\cS_{+,N_j},\l_0}^{\a_{N_j}}\circ\bl^{N_j}\|_{H^2_{\cS_{+,N_j}}(\a_{N_j})}^2=\lim_{N_j\to\infty} k_{\cS_{+,N_j}}^{\a_{N_j}}(\l_0,\l_0).
$$
Thus
\begin{equation}\label{11jan2}
\left(\lim_{N_j\to\infty}  k_{\cS_{+,N_j}}^{\a_{N_j}}(\l_0,\l_0)\right)^2=|g(\l_0)|^2\le k_{\cS_+}^\a(\l_0,\l_0)\|g\|_{H^2_{\cS_+}(\a)}^2\le k_{\cS_+}^\a(\l_0\l_0)\lim_{N_j\to\infty}  k_{\cS_{+,N_j}}^{\a_{N_j}}(\l_0,\l_0)
\end{equation}
As a combination of \eqref{11jan1} and \eqref{11jan2} we have
$$
\lim_{N_j\to\infty}  k_{\cS_{+,N_j}}^{\a_{N_j}}(\l_0,\l_0)=k_{\cS_+}^\a(\l_0,\l_0).
$$
That is, in fact, the sequence $\{k_{\cS_{+,N}}^{\a_N}(\bl^N(z),\l_0)\}_{N}$ converges to $k_{\cS_+}^\a(\bl(z),\l_0)$ in $H^2$.
\end{proof}
\begin{corollary}
Uniformly on compact subsets in $\cS_+$
$$
e_{\a_N}(\l)\to e_{\a}(\l)
$$
\end{corollary}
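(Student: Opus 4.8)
The plan is to recover both $e_{\a_N}$ and $e_\a$ from their reproducing kernels by a normalized limit and then feed in the preceding lemma. Fix a reference point $\l_*\in\bbR_-$. Each of $\cS_+$ and $\cS_{+,N}$ is a Widom domain in which DCT holds (the truncated domains have only finitely many gaps, so this is automatic), hence the $\cS_+$-version of \eqref{21jan0} applies on every domain and gives
\[
\frac{e_{\a}(\l)}{e_{\a}(\l_*)}=\lim_{\mu\to-\infty}\frac{k^{\a}_{\cS_+}(\l,\mu)}{k^{\a}_{\cS_+}(\l_*,\mu)},\qquad
\frac{e_{\a_N}(\l)}{e_{\a_N}(\l_*)}=\lim_{\mu\to-\infty}\frac{k^{\a_N}_{\cS_{+,N}}(\l,\mu)}{k^{\a_N}_{\cS_{+,N}}(\l_*,\mu)},
\]
where $\mu\to-\infty$ corresponds to the points $p(iy)$, $y\to\infty$, under the normalization $\bl(iy)\to-\infty$.

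Next I would run a normal-families argument. The canonical products \eqref{19ja1} defining $e_{\a_N}$ are built from the contractive factors $\Phi(\cdot,\cdot)$ (which satisfy $|\Phi|=e^{-G}\le 1$) together with the rational prefactors $(1-\l_j/\l)/(1-c_j/\l)$. Since $\cS_{+,N}\subset\cS_{+,N+1}\subset\cdots$ increase to $\cS_+$, monotonicity of the Green function gives $G_{\cS_{+,N}}\le G_{\cS_+}$, so the tails of these products on a compact $K\Subset\cS_+$ are dominated by the corresponding $\cS_+$-tails; the Widom condition (PW) and ($1/2$-GLC) then furnish an $N$-uniform bound. Hence $\{e_{\a_N}\}_N$ is locally uniformly bounded, so it is a normal family by Montel's theorem, and any locally uniform subsequential limit $g$ is analytic and character-automorphic with character $\a$ (because $\a_N=\a|\pi_1(\cS_{+,N})$ stabilizes on each fixed generator). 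It therefore suffices to show every such $g$ equals $e_\a$, after which the whole sequence converges. To identify $g$, I would pass to the limit in the truncated formula using the preceding lemma, $k^{\a_N}_{\cS_{+,N}}(\l,\mu)\to k^\a_{\cS_+}(\l,\mu)$ locally uniformly for fixed $\mu$, obtaining $g(\l)/g(\l_*)=e_\a(\l)/e_\a(\l_*)$; the multiplicative normalization is pinned by the common $\l\to-\infty$ asymptotics of the products, giving $g(\l_*)=e_\a(\l_*)$ and hence $g=e_\a$.

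The main obstacle is the interchange of the two limits $N\to\infty$ and $\mu\to-\infty$: the lemma delivers convergence of the kernels only for each fixed $\mu$, while the representation formulas require first sending $\mu\to-\infty$. I would resolve this by showing that the inner limit is attained \emph{uniformly in $N$}, i.e. that
\[
\frac{k^{\a_N}_{\cS_{+,N}}(\l,\mu)}{k^{\a_N}_{\cS_{+,N}}(\l_*,\mu)}-\frac{e_{\a_N}(\l)}{e_{\a_N}(\l_*)}
\]
is controlled on $K\Subset\cS_+$ by a bound independent of $N$ once $\mu$ is sufficiently negative. This uniformity is exactly the same $N$-uniform tail estimate on the canonical products (via (PW) and ($1/2$-GLC)) that underlies the normal-family bound, after which a standard diagonal argument closes the double limit and completes the identification $e_{\a_N}\to e_\a$ uniformly on compacts.
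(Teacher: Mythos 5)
The paper states this corollary without proof; it is evidently meant to follow from the preceding lemma together with the explicit product representation \eqref{19ja1}, and the shortest route in fact bypasses the reproducing kernels entirely. Since $E_N\supset E$, the domains $\cS_{+,N}$ increase to $\cS_+$, so $G_N\le G$, the complex Green functions converge, $\Phi_N(\cdot,\l_0)\to\Phi(\cdot,\l_0)$, and $c_{j,N}\to c_j$; the bound $|\Phi_N|=e^{-G_N}\ge|\Phi|$ together with (PW) and ($1/2$-GLC) controls the discrepancy between the finite products over $j\le N$ and the full product \eqref{19ja1} uniformly in $N$ on compacts. Factorwise convergence plus this uniform tail control already gives $e_{\a_N}\to e_\a$ directly. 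Your proposal instead reconstructs $e_\a$ from the kernels via \eqref{21jan0} and then appeals to the lemma, which is workable in principle but introduces two problems.

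First, a bookkeeping slip: \eqref{21jan0} is stated for the kernels $k^\a$ of $H^2(\a)$ on $\cR$, which differ from $k^\a_{\cS_+}$ by the outer factor $\psi_{\cR}(p)\overline{\psi_{\cR}(p_0)}$ \emph{and} a character shift $\a_\psi$ (Lemma \ref{l19jan2}); the ``$\cS_+$-version'' of the limit you write down therefore produces $e_{\cR}(\cdot,\a+\a_\psi)/\psi_{\cR}$ rather than $e_\a$, and the shift $\a_{\psi,N}$ is itself $N$-dependent, so it must be tracked through the limit. Second, and more substantively, the crux of your argument --- that the inner limit $\mu\to-\infty$ is attained uniformly in $N$ --- is asserted to be ``exactly the same'' estimate as the normal-family bound, but it is not: it requires a quantitative rate, uniform over $N$, for the convergence of the kernel ratio to $e_{\a_N}(\l)/e_{\a_N}(\l_*)$ as $\mu\to-\infty$, which you do not supply. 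Note also that the $N$-uniform product estimates you invoke for the normal-family step are, combined with $\Phi_N\to\Phi$, already enough to prove the corollary outright, so the kernel detour and its double-limit difficulty are avoidable rather than essential.
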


We wish to explain further the relationship between the generalized Abelian integrals $\theta_{\cR}^{(k)}$ and the typical Abelian integrals on a hyperelliptic Riemann surface.  Consider the family of polynomials
\begin{align*}
s_N(\lambda) := \l\prod_{j = 1}^N (\lambda - a_j)(\lambda - b_j)
\end{align*}
and the associated family of hyperelliptic Riemann surfaces
\begin{align*}
\cS_N := \{(\l, w) : w^2 = s_N(\l)\}.
\end{align*}
We fix a basis $\{A_j, B_j\}_{j = 1}^N$ for the homology of $\cS_N$: let $A_j$ denote the equivalence class of loops forming a clockwise circle around $[a_j, b_j]$ on the upper sheet $\cS_{+,N}$, and let $B_j$ denote the equivalence class of loops beginning at $-1$, passing through the gap $(a_j, b_j)$ from the upper sheet to the lower sheet, and then returning to $-1$. 

Consider now the usual Abelian integrals of the second kind $\omega_2^{k,N}$ with pole at $\infty$ of order $k$ in $\e_N$, written in local coordinates near $P = \infty$ as
\begin{align*}
\dd \omega_2^{k,N}(\e_N) \sim \left(\frac{1}{\e_N^{k}} + f_{k,N}(\e_N)\right)\dd \e_N,
\end{align*}
where $f_{k,N}$ is holomorphic and $k \geq 2$.  This form is unique up to the addition of Abelian differentials of the first kind; we normalize by assuming $f_{k,N}(\e)\dd \e$ has vanishing $A$-periods.

We denote by $\Theta_N^{(k)}$ the generalized Abelian integrals corresponding to $\Omega_N$; that is,
\begin{align*}
M_{k,N}(\lambda) &:= \Im(\lambda^{k+\frac{1}{2}}) + \frac{1}{\pi}\int_{\bbR_+ \setminus E_N} G_N(\lambda, \x)\dd \x^{k+\frac{1}{2}}, \\
\Im(\Theta^{(k)}_N(\lambda)) &= M_{k,N}(\lambda)
\end{align*}
By the additive automorphic property of $\Theta^{(k)}_N$, $\dd \Theta^{(k)}_N$ is an admissible differential form on the Riemann surface $\cS_N$ with pole only at the point $\infty$, where $\Theta_N^{(k)}$ has asymptotic behavior $\Theta^{(k)}_N(\lambda) \sim \lambda^{k+\frac{1}{2}}$.  Expanding at $\infty$ in the coordinate $\e = \frac{1}{\sqrt{\lambda}}$, one has
\begin{align*}
\dd \Theta_N^{(k)}(\e) &\sim -(2k+1)\frac{1}{\e^{2k+2}} \dd \e.
\end{align*}
Consider the differential form
\begin{align*}
\dd \left(\Theta_N^{(k)} + (2k+1)\omega_2^{2k+2,N}\right)
\end{align*}
which is given in local coordinates at $\infty$ by $(2k+1)f_{2k+2,N}(\e)\dd\e$.  This is an Abelian differential of the first kind, and is thus determined by its period class; since $f_{2k+2,N}(\e)\dd \e$ has vanishing $A$-periods, we conclude that the $B$-periods of $\dd \Theta_N^{(k)}$ and $-(2k+1)\dd\omega_2^{2k+2,N}$ must agree.

Denote by $\omega_1^{l,N}$, $1 \leq l \leq N$ denote a basis of Abelian integrals of the first kind on $\cS_N$, normalized such that
\begin{align*}
\int_{A_j} \dd \omega_1^{l,N} = \delta_{j,l}.
\end{align*}
We can explicitly compute the $B$-periods of $ \dd \Theta_N^{(k)}$ in terms of these integrals:
\begin{proposition}
The $B$-periods of $\dd\Theta_N^{(k)}$ are given by
\begin{align*}
\int_{B_j} \dd \Theta_N^{(k)} &= -2\pi i \frac{1}{(2k)!}\partial^{2k+1}\omega_1^{j,N}|_{\infty}
\end{align*}
\end{proposition}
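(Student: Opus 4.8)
The plan is to reduce the computation of $\int_{B_j}\dd\Theta_N^{(k)}$ to a residue calculation via the classical reciprocity law between Abelian differentials of the first and second kind. As established in the preceding discussion, $\dd(\Theta_N^{(k)} + (2k+1)\omega_2^{2k+2,N})$ is of the first kind with vanishing $A$-periods, so that
\[
\int_{B_j}\dd\Theta_N^{(k)} = -(2k+1)\int_{B_j}\dd\omega_2^{2k+2,N}.
\]
It therefore suffices to compute the $B_j$-period of the normalized second-kind differential $\dd\omega_2^{2k+2,N}$, whose only singularity is a pole of order $2k+2$ at $\infty$ and whose $A$-periods vanish by the normalization of $f_{2k+2,N}$.

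For this I would invoke Riemann's bilinear relation between the normalized holomorphic differential $\dd\omega_1^{j,N}$ (with $\int_{A_l}\dd\omega_1^{j,N}=\d_{j,l}$) and the second-kind differential $\dd\omega_2^{2k+2,N}$ (with $\int_{A_l}\dd\omega_2^{2k+2,N}=0$). Since the latter has its sole pole at $\infty$ and no residue there, the reciprocity law collapses to
\[
\int_{B_j}\dd\omega_2^{2k+2,N} = 2\pi i\,\Res_{\infty}\left(\Omega_1^{j,N}\,\dd\omega_2^{2k+2,N}\right),
\]
where $\Omega_1^{j,N}$ denotes a local primitive of $\dd\omega_1^{j,N}$ near $\infty$; the off-diagonal terms $\int_{B_l}\dd\omega_1^{j,N}\cdot\int_{A_l}\dd\omega_2^{2k+2,N}$ drop out precisely because the $A$-periods of $\dd\omega_2^{2k+2,N}$ are zero.

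The residue is then a matter of matching powers in the local coordinate $\e=1/\sqrt\lambda$. Writing $\dd\omega_1^{j,N}=(\sum_{n\geq 0}c_n\e^n)\dd\e$, the primitive $\Omega_1^{j,N}$ has $\e^{2k+1}$-coefficient equal to $c_{2k}/(2k+1)$, and since $\dd\omega_2^{2k+2,N}=(\e^{-(2k+2)}+f_{2k+2,N}(\e))\dd\e$ with $f_{2k+2,N}$ holomorphic, the only term producing $\e^{-1}\dd\e$ in the product is $\frac{c_{2k}}{2k+1}\e^{2k+1}\cdot\e^{-(2k+2)}\dd\e$. Hence $\Res_\infty=c_{2k}/(2k+1)$. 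Identifying $c_{2k}=\frac{1}{(2k)!}\pd^{2k+1}\omega_1^{j,N}|_\infty$ — the $\e^{2k+1}$-coefficient of the integral $\omega_1^{j,N}$ is $c_{2k}/(2k+1)$, so its $(2k+1)$-st derivative at $\infty$ equals $(2k)!\,c_{2k}$ — I obtain $\int_{B_j}\dd\omega_2^{2k+2,N}=\frac{2\pi i}{2k+1}\cdot\frac{1}{(2k)!}\pd^{2k+1}\omega_1^{j,N}|_\infty$.

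Combining this with the reduction above gives
\[
\int_{B_j}\dd\Theta_N^{(k)} = -(2k+1)\cdot\frac{2\pi i}{2k+1}\cdot\frac{1}{(2k)!}\pd^{2k+1}\omega_1^{j,N}|_\infty = -2\pi i\,\frac{1}{(2k)!}\pd^{2k+1}\omega_1^{j,N}|_\infty,
\]
as claimed. I expect the main obstacle to be the bookkeeping of the normalizations: one must track the factor $2k+1$ coming from the leading singularity $-(2k+1)\e^{-2k-2}\dd\e$ of $\dd\Theta_N^{(k)}$ against the factor $1/(2k+1)$ produced by integrating $\e^{2k}\mapsto \e^{2k+1}/(2k+1)$, confirm that $\dd\omega_2^{2k+2,N}$ genuinely carries no residue so that the bilinear relation in its residue-form applies, and verify that the $A$-period normalization of $f_{2k+2,N}$ is exactly what annihilates the off-diagonal contributions in the reciprocity law. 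Once these conventions are pinned down the two factors of $(2k+1)$ cancel and the identity follows.
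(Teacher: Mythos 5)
Your argument is correct and follows the same route as the paper: reduce to the $B$-periods of $-(2k+1)\,\dd\omega_2^{2k+2,N}$, apply the reciprocity law between first- and second-kind differentials (which the paper proves by cutting $\cS_N$ into a $4N$-gon and applying the residue theorem, exactly the reciprocity identity you invoke), and evaluate the residue at $\infty$ in the coordinate $\e=1/\sqrt{\lambda}$. Your coefficient bookkeeping $\Res_\infty=c_{2k}/(2k+1)=\tfrac{1}{(2k+1)!}\partial^{2k+1}\omega_1^{j,N}|_\infty$ reproduces the paper's intermediate value, and the factor $-(2k+1)$ converts $(2k+1)!$ into $(2k)!$ as claimed.
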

\begin{proof}
We can represent $\cS_N$ as a regular $4N$-gon $R$ by cutting along representative loops $(A_j, B_j)$ and identifying the corresponding sides.  Using the residue theorem and the normalizations of $\dd \omega_1^{l,N}$ and $\dd \omega_2^{2k+2,N}$, we have
\begin{align*}
\int_{B_j} \dd \omega_2^{2k+2,N} &= \int_{\partial R} \omega_1^{j,N} \dd \omega_2^{2k+2,N} \\
&= 2\pi i \Res_\infty(\omega_1^{j,N}\dd \omega_2^{2k+2,N}) \\
&= 2\pi i \frac{1}{(2k+1)!}\partial^{2k+1}\omega_1^{j,N}|_{\infty}.
\end{align*}
Because the $B$-periods of $\dd\Theta_N^{(k)}$ are $-(2k+1)$ times the $B$-periods of $\dd \omega_2^{2k+2,N}$, the proposition follows.
\end{proof}

It is not hard to see that these $M_{k,N}$ converge pointwise to the generalized Abelian integrals $M_k$:
\begin{lemma}
Suppose \eqref{eq:halfmomentcondn} holds.  Then for all $\lambda \in \cS_+$,
\begin{align*}
\lim_{N \to \infty} M_{k,N}(\lambda) = M_k(\lambda),
\end{align*}
where we take the limit for $N \geq N_0$ such that $\lambda \in \cS_{+,N_0}$.
\end{lemma}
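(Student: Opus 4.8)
The plan is to deduce the statement from the monotone convergence theorem, once the finite‑gap Green functions $G_N$ are shown to increase to $G$. Since the term $\Im\lambda^{k+\frac 1 2}$ appears unchanged in both $M_{k,N}$ and $M_k$, it cancels and it suffices to treat the integral terms. First I would record the nesting of the domains: because $E_N=\bbR_+\setminus\bigcup_{j=1}^N(a_j,b_j)$ removes only the first $N$ gaps, one has $E\subset E_{N+1}\subset E_N$, and hence the exhaustion $\cS_{+,N}\subset\cS_{+,N+1}\subset\cdots$ with $\bigcup_N\cS_{+,N}=\cS_+$.

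Next I would establish monotone convergence of the Green functions. For fixed $\lambda$, the difference $G_{N+1}(\lambda,\cdot)-G_N(\lambda,\cdot)$ is harmonic in $\cS_{+,N}$ (the logarithmic pole at $\lambda$ cancels) and is nonnegative on $\partial\cS_{+,N}=E_N$, where $G_N$ vanishes and $G_{N+1}\ge 0$; the maximum principle then gives $G_N\le G_{N+1}$, and the same argument with $G$ in place of $G_{N+1}$ gives $G_N\le G$. Thus $G_N(\lambda,\xi)$ increases to a limit $\widetilde G(\lambda,\xi)\le G(\lambda,\xi)$. By Harnack's principle $G-\widetilde G$ is harmonic in $\cS_+$, and since $E\subset E_N$ forces $G_N\equiv 0$ on $E$ for every $N$, we get $\widetilde G\equiv 0$ on $E$. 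Hence $G-\widetilde G$ is a bounded nonnegative harmonic function on $\cS_+$ vanishing on $\partial\cS_+=E$; by the regularity assumption (R) and the maximum principle it vanishes identically, so $G_N\uparrow G$ pointwise.

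Then I would assemble the integrands. Setting, for $\xi\in\bbR_+$,
$$
f_N(\xi)=\begin{cases} G_N(\lambda,\xi), & \xi\in\bbR_+\setminus E_N,\\ 0,&\xi\in E_N,\end{cases}
\qquad
f(\xi)=\begin{cases} G(\lambda,\xi), & \xi\in\bbR_+\setminus E,\\ 0,&\xi\in E,\end{cases}
$$
which is consistent with $G_N\equiv 0$ on $\partial\cS_{+,N}$, one checks $f_N\uparrow f$ monotonically: for $\xi\in E\subset E_N$ both are $0$, while for $\xi$ in a gap $(a_j,b_j)$ the sequence $f_N(\xi)$ equals $0$ for $N<j$ and then $G_N(\lambda,\xi)\uparrow G(\lambda,\xi)$ for $N\ge j$. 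Applying the monotone convergence theorem against the positive measure $\dd\xi^{k+\frac 1 2}$ on $\bbR_+$ yields
$$
\lim_{N\to\infty}\frac1\pi\int_{\bbR_+\setminus E_N}G_N(\lambda,\xi)\,\dd\xi^{k+\frac 1 2}
=\frac1\pi\int_{\bbR_+\setminus E}G(\lambda,\xi)\,\dd\xi^{k+\frac 1 2},
$$
the right-hand side being finite by Proposition \ref{pr:genabint} under \eqref{eq:halfmomentcondn}. Adding $\Im\lambda^{k+\frac 1 2}$ gives $M_{k,N}(\lambda)\to M_k(\lambda)$.

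I expect the main obstacle to be the second step, namely upgrading $G_N\le G$ and monotonicity to the equality $\widetilde G=G$. The bound and monotonicity are immediate from the maximum principle, but passing to the identification of the limit requires the full force of the regularity hypothesis (R): one must know that the limiting harmonic function genuinely attains zero boundary values on the infinitely connected boundary $E$, and it is precisely the regularity of $\cS_+$ that guarantees this.
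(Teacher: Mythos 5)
Your proof is correct and follows essentially the same route as the paper: both rest on domain monotonicity of the Green functions ($G_N\uparrow G$ on the exhausting domains $\cS_{+,N}$) followed by the monotone convergence theorem against $\dd\xi^{k+\frac12}$, with \eqref{eq:halfmomentcondn} guaranteeing finiteness of the limit. The only difference is that you spell out the maximum-principle and regularity arguments behind $G_N\uparrow G$, which the paper compresses into the phrase ``by monotonicity of the domains.''
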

\begin{proof}
Let $N_0$ be such that $\lambda \in \cS_{+,N_0}$.  By monotonicity of the domains, we have that for $N \geq N_0$,
\begin{align*}
M_{k}(\lambda) - M_{k,N}(\lambda) &= \frac{1}{\pi}\left(\int_{\bbR_+ \setminus E_N} (G(\lambda, \x) - G_N(\lambda,\x)) \dd \x^{k+\frac{1}{2}} + \sum_{j > N} \int_{a_j}^{b_j} G(\lambda, \x) \dd \x^{k+\frac{1}{2}}\right)
\end{align*}
and by the monotone convergence theorem and condition \eqref{eq:halfmomentcondn} $M_{k}(\lambda) - M_{k,N}(\lambda)$ tends to $0$.
\end{proof}

Denote by $\eta_{k,N}$ the additive characters of the function $\Theta_N^{(k)}$, and let $\widetilde{\eta}_{k,N} \in \Gamma^*$ be the character formed by including $\eta_{k,N}$ into the larger group $\Gamma^*$ by
\begin{align*}
\widetilde{\eta}_{k,N}(\gamma) = \begin{cases}
\eta_{k,N}(\gamma) & \gamma \in \pi_1(\Omega_N) \\
0 & \text{otherwise}
\end{cases}.
\end{align*}
\begin{corollary}
Suppose \eqref{eq:halfmomentcondn} holds.  Then the characters $\widetilde{\eta}_{k,N}$ converge to $\eta^{(k)}$.
\end{corollary}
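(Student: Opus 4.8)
The plan is to identify both families of additive characters as conjugate periods (fluxes) of the corresponding harmonic functions, and then to deduce convergence of these periods from the convergence $M_{k,N}\to M_k$ established in the previous lemma, after upgrading it to a locally uniform statement. Concretely, fix a generator $\g_j\in\G$ (the loop through the $j$-th gap based at $-1$) and choose once and for all a smooth representative contour $C_j$ that crosses $\bbR_+$ only inside the open gap $(a_j,b_j)$ and otherwise crosses $\bbR$ in $\bbR_-$; such a $C_j$ is a compact subset of $\cS_+\subset\cS_{+,N}$ for every $N\ge j$. Since $\Im\Theta^{(k)}=M_k$ and $\Im\Theta_N^{(k)}=M_{k,N}$ are single-valued, the increments of $\Theta^{(k)}$ and $\Theta_N^{(k)}$ along $C_j$ are real and equal to the conjugate periods:
$$
2\pi\eta^{(k)}(\g_j)=\oint_{C_j}\dd(\Re\Theta^{(k)}),\qquad 2\pi\eta_{k,N}(\g_j)=\oint_{C_j}\dd(\Re\Theta_N^{(k)})\quad(N\ge j),
$$
and by the Cauchy--Riemann equations each side is a linear expression in the gradient of $M_k$ (resp. $M_{k,N}$) along the fixed contour $C_j$.

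First I would upgrade the pointwise convergence of the previous lemma to locally uniform convergence. Write $M_{k,N}=v_k+u_N$ with $u_N(\l)=\frac1\pi\int_{\bbR_+\setminus E_N}G_N(\l,\x)\dd\x^{k+\frac12}$, and similarly $M_k=v_k+u$. Each $u_N$ is a positive harmonic function on $\cS_{+,N}$, and I claim the sequence is monotone increasing on $\cS_+$: passing from $N$ to $N+1$ both enlarges the domain (so $G_{N+1}\ge G_N$ by domain monotonicity of the Green function) and enlarges the region of integration by the positive contribution over $(a_{N+1},b_{N+1})$. Since $u_N\to u$ pointwise to the finite harmonic limit $u$, Harnack's principle gives $u_N\to u$ locally uniformly, hence $M_{k,N}\to M_k$ locally uniformly. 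Because all these functions are harmonic on a fixed neighbourhood of the compact contour $C_j$ (recall $(a_j,b_j)\subset\cS_{+,N}$ for $N\ge j$), the standard interior estimates for harmonic functions promote locally uniform convergence to locally uniform convergence of gradients, so $\nabla M_{k,N}\to\nabla M_k$ uniformly on $C_j$.

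Combining these steps, the flux integrals converge, $\eta_{k,N}(\g_j)\to\eta^{(k)}(\g_j)$ in $\bbR$ for every fixed $j$. Finally, I would recall that the extension $\widetilde\eta_{k,N}$ and $\eta^{(k)}$ are additive homomorphisms $\G\to\bbR/\bbZ$, so their values on an arbitrary $\g=\g_{j_1}^{n_1}\cdots\g_{j_m}^{n_m}$ are finite integer combinations of their values on the generators; hence convergence on each generator yields $\widetilde\eta_{k,N}(\g)\to\eta^{(k)}(\g)$ for every $\g\in\G$. Since $\G$ is discrete and $\G^*$ carries the topology of pointwise convergence on $\G$, this is exactly convergence $\widetilde\eta_{k,N}\to\eta^{(k)}$ in $\G^*$.

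The main obstacle is the middle step: mere pointwise convergence of $M_{k,N}$ does not control the conjugate periods, which are first-order data. The monotonicity-plus-Harnack argument is what makes the convergence locally uniform and therefore differentiable; verifying the monotonicity of $u_N$ cleanly (with the Green-function domain monotonicity and the growth of the integration region both acting in the same direction) and checking that a single compact contour $C_j$ represents $\g_j$ simultaneously in all $\cS_{+,N}$ with $N\ge j$ are the points that need care.
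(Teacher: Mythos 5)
The paper states this corollary without proof, as an immediate consequence of the preceding lemma, so your task was to supply the missing argument; what you propose is the natural one and is correct in substance. Identifying $2\pi\eta_{k,N}(\g_j)$ and $2\pi\eta^{(k)}(\g_j)$ with the conjugate periods of $M_{k,N}$ and $M_k$ along a fixed contour through the $j$-th gap, upgrading the lemma's pointwise convergence to locally uniform convergence via monotonicity, passing to gradients by interior estimates for harmonic functions, and reducing convergence in $\G^*$ to convergence on the free generators of $\G$ is exactly what is needed. Two slips should be repaired. First, $u_N$ is \emph{not} harmonic on $\cS_{+,N}$: it is the Green potential of the measure $\chi_{\cup_{j\le N}(a_j,b_j)}\,\dd\x^{k+\frac12}$, hence only superharmonic, with Riesz mass sitting precisely on the gaps that your contour $C_j$ must cross; likewise $u$ is not harmonic on $\cS_+$. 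Harnack's principle should therefore be applied not to $u_N$ but to the genuinely harmonic increasing sequence $M_{k,N}=v_k+u_N$ --- equivalently to the nonnegative harmonic differences $M_{k,N}-M_{k,N_0}$ on a fixed $\cS_{+,N_0}$ containing a neighbourhood of $C_j$ --- or one may simply invoke Dini's theorem for the monotone sequence of continuous potentials. The gradient estimate must in any case be applied to $M_{k,N}-M_k$, which \emph{is} harmonic near $C_j$, and your text does do this in the final step. Second, the inclusion is backwards: since $E\subset E_N$ one has $\cS_{+,N}\subset\cS_+$ with $\cS_{+,N}$ increasing to $\cS_+$; what you actually need, and what is true, is that the fixed compact contour $C_j$ lies in $\cS_{+,N}$ for every $N\ge j$, because its only intersection with $\bbR_+$ is inside the gap $(a_j,b_j)$, which is removed from $E_N$ once $N\ge j$. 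With these two corrections the argument is complete and fills the gap the paper leaves open.
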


With these facts in hand, we now have the tools to prove the following

\begin{theorem} There exist polynomials
$$
A_k(\l,\a)=
\l^{k}+\cA_1(\a)\l^{k-1}+\cA_k(\a), \quad \text{and}
$$
$$
B_k(\l,\a)
=B_0(\a)\l^{k}+B_1(\a)\l^{k-1}+B_k(\a)
$$
such  that for the generalized Abelian integral $\t_{\cR}^{(k)}(p)$ we have
\begin{equation}\label{p17oct}
(\t_{\cR}^{(k)}(p)+i\pd_{\eta^{(k)}})e_{\cR}(p,\a)
=A_k(\l_{\cR}(p),\a) \mu_{\cR}(p)e_{\cR}(p,\a+\fj)-B_k(\l_{\cR}(p),\a)
 e_{\cR}(p,\a).
\end{equation}
\end{theorem}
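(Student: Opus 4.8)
The plan is to establish \eqref{p17oct} first on the compact hyperelliptic approximations $\cS_N$, where it is the classical assertion that the Baker--Akhiezer function solves the $k$-th auxiliary linear problem, and then to pass to the limit $N\to\infty$ using the convergences $e_{\a_N}\to e_\a$, $M_{k,N}\to M_k$ and $\widetilde\eta_{k,N}\to\eta^{(k)}$ established above.

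First I would record the structural meaning of the left-hand side. By \eqref{19jan50}, differentiating $e^{i\t_{\cR}^{(k)}(p)t_k}e_{\cR}(p,\a-t_k\eta^{(k)})$ in $t_k$ at $t_k=0$ produces exactly $i(\t_{\cR}^{(k)}(p)+i\pd_{\eta^{(k)}})e_{\cR}(p,\a)$, so \eqref{p17oct} is the infinitesimal form of the $t_k$-flow. The basic observation making the statement meaningful is that this quantity is character automorphic of character $\a$: under $\un\g$ the additive increment $2\pi\eta^{(k)}(\g)$ of $\t_{\cR}^{(k)}$ from \eqref{16jan1} is cancelled precisely by the term $i\cdot 2\pi i\,\eta^{(k)}(\g)$ obtained when $\pd_{\eta^{(k)}}$ differentiates the multiplier $e^{2\pi i\a(\g)}$ of $e_{\cR}(p,\a)$. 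The right-hand side is manifestly of character $\a$ as well (recall $2\fj=0$), so both sides live in the same class, and in the finite-gap model both are character-$\a_N$ automorphic functions on $\cS_N$ that are analytic away from $\infty$ and carry the same Dirichlet divisor as $e_{\a_N}$. Since $\{\l^j e_{\a_N},\ \l^j\mu\, e_{\a_N}(\cdot,\a_N+\fj)\}$ span the corresponding finite-dimensional space by Riemann--Roch, matching the essential singularity at $\infty$ forces a unique representation with polynomial coefficients $A_{k,N},B_{k,N}$; this is \eqref{p17oct} for $\cS_N$.

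To identify the coefficients explicitly I would divide through by $e_{\cR}(p,\a)$ and use \eqref{15may7}, so that $\mu_{\cR}(p)e_{\cR}(p,\a+\fj)/e_{\cR}(p,\a)=(m_+^\a(\l)-m_+^\a(0))/i$, turning \eqref{p17oct} into the scalar identity
\[
\Theta_k(\l)+i\,\pd_{\eta^{(k)}}\log e_{\cR}(p,\a)=\frac{A_k(\l,\a)}{i}\bigl(m_+^\a(\l)-m_+^\a(0)\bigr)-B_k(\l,\a).
\]
Writing $\mu=\sqrt{\l}$ and expanding as $\l\to-\infty$, one has $\Theta_k(\l)=\mu^{2k+1}+O(\mu^{-1})$ with no intermediate powers (the integral correction in \eqref{18jan5} decays like $\mu^{-1}$), while $(m_+^\a(\l)-m_+^\a(0))/i=\mu+\chi_0+\chi_1\mu^{-1}+\cdots$ by \eqref{20jan7}. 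Requiring the odd powers $\mu^{2k-1},\dots,\mu$ of the product $A_k\cdot(m_+^\a-m_+^\a(0))/i$ to vanish gives exactly the triangular system \eqref{20jan8} with $A_k$ monic, and requiring the even powers $\mu^{2k},\dots,\mu^0$ to be matched by $B_k$ gives \eqref{20jan9}; relation \eqref{20jan10} then follows by applying $\pd_\eta$ to the $\chi$-expansion. This pins down $A_k,B_k$ uniquely, and in the finite-gap case it identifies $A_{k,N},B_{k,N}$ through the finite-gap data $\chi_{n,N}$.

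Finally I would pass to the limit. Convergence of the $m$-function expansions gives $\chi_{n,N}\to\chi_n$, hence $A_{k,N}\to A_k$ and $B_{k,N}\to B_k$ via \eqref{20jan8}--\eqref{20jan9}; together with $e_{\a_N}\to e_\a$ and $\t^{(k)}_{\cR,N}\to\t^{(k)}_{\cR}$ coming from $M_{k,N}\to M_k$, every term of \eqref{p17oct} on $\cS_N$ converges to its counterpart on $\cR$. The main obstacle is controlling the single remaining term, the character derivative $\pd_{\eta^{(k)}_N}e_{\a_N}\to\pd_{\eta^{(k)}}e_{\cR}$: one must show that $e_{\cR}(p,\a)$ is differentiable in the $\eta^{(k)}$-direction with derivative again of bounded characteristic, and that this derivative is the limit of its finite-gap analogues. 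This is precisely where the gap-length condition \eqref{21nov0} for the relevant order is needed, since differentiating the canonical product \eqref{19ja1} in the $\eta^{(k)}$-direction requires summability of the increments governed by \eqref{21nov0}. Equivalently, one can bypass the limit and argue directly that the difference of the two sides of the scalar identity is single-valued on $\cS_+$, has cancelling residues at the Dirichlet points $\l_j$ (the zeros of $e_{\cR}(\cdot,\a)$ and the poles of $m_+^\a$ reflect the same simple zero), vanishes at $\infty$ by the asymptotic matching, and is therefore identically zero by the maximum principle for the Widom domain.
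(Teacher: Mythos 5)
Your overall strategy --- prove the identity on the finite-gap approximants $\cS_{+,N}$, where it is the classical Baker--Akhiezer relation, identify the coefficients from the asymptotic expansion of $m_+^\a$ via the triangular systems \eqref{20jan8}--\eqref{20jan9}, and pass to the limit using the convergence of the reproducing kernels, of $M_{k,N}$, and of the characters $\widetilde\eta_{k,N}$ --- is the same as the paper's; the coefficient identification is carried out in the paper in the subsequent lemma through the vanishing of the integral \eqref{pe17octc}.

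The genuine gap is precisely the step you yourself flag as ``the main obstacle'' and then do not close: the convergence $\pd_{\eta^{(k)}_N}e_{\a_N}\to\pd_{\eta^{(k)}}e_{\cR}$, i.e.\ differentiability of $\a\mapsto e_{\cR}(p,\a)$ in the $\eta^{(k)}$-direction together with a uniform-in-$N$ control of these derivatives. Saying that the gap-length condition ``provides summability of the increments'' of the canonical product is an assertion, not an argument; nothing in your write-up produces the required estimate. The paper sidesteps this entirely by writing the finite-gap relation in \emph{integrated} form, $-i\bigl(e^N_{\a_N-t_k\eta^{(k)}_N}-e^N_{\a_N}\bigr)=\int_0^{t_k}(\cdots)\,\dd\x$, in which no character derivative appears at finite $N$; the limit is taken of the integral identity (compactness of $\cD(E)$ and continuity of $\a\mapsto e_{\cR}(p,\a)$ under DCT suffice), and differentiability in $t_k$ of the limit is then an \emph{output} of the fundamental theorem of calculus applied to the continuous integrand. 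If you insist on the differential form you must supply the missing uniform bound; otherwise restructure around the integral form. Two further points: (i) the character $\eta_{k,N}$ of $\Theta^{(k)}_N$ on $\pi_1(\cS_{+,N})$ need not coincide with the restriction $\eta^{(k)}|\pi_1(\cS_{+,N})$, so the finite-gap flow is not literally the restricted flow and a correction by the extremal functions $\ve(\l,(\widetilde\eta_{k,N}-\eta^{(k)}_N)\x)$ is needed before passing to the limit (the paper makes this explicit in a remark); (ii) your proposed ``bypass'' is not valid: in an infinitely connected domain a bounded, single-valued, residue-free analytic function vanishing at the single boundary point $\infty$ is not forced to vanish identically --- one would need decay on all of $E$ --- so the maximum-principle shortcut cannot replace the approximation argument.
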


\begin{proof}
For a fixed $N$ and $t_k > 0$, we write the corresponding relation to \eqref{p17oct} in the integral form:
\begin{align*}
-i(e^N_{\a_N - t_k\eta_N^{(k)}}(\l) - e^N_{\a_N}(\l)) =
\int_0^{t_k}\left(A_{k,N}(\l,\a_N - \x \eta_N^{(k)})\sqrt{\l}e^N_{\a_N + \fj - \x \eta_N^{(k)}}(\l) \right. \\ \left. - B_{k,N}(\l,\a_N - \x \eta_N^{(k)})e^N_{\a_N - \x \eta_N^{(k)}}(\l) - \Theta^{(k)}_N(\l)e^N_{\a_N - \x \eta_N^{(k)}}(\l) \right) \dd \x.
\end{align*}
This representation is classical in the finite gap case (cf. e.g. \cite{GeHo03}).  Then we pass to the limit as $N\to \infty$, using the discussion of convergence above. A compactness argument shows that the representation \eqref{p17oct} exists.
\end{proof}

\begin{remark}Let $\ve(\l,\a)\in H^\infty_{\cS_+}(\a)$ be  the extremal function in the following sense
$$
\ve(-1,\a)=\sup\{w(-1): w\in H^\infty_{\cS_+}(\a),\ \|w\|_\infty\le 1\}, \ \text{where}\ \|\ve(-1,\a)\|_\infty\le 1, \a\in\G^*.
$$
By the DCT $\ve(\l,\a)\to 1$ as $\a\to 0_{\G^*}$ \cite[(21) p. 205]{Has83} uniformly on  compact subsets in $\cS_+$.
Despite the similar notation, we emphasize that it is not necessarily the case that $\eta_{k,N}$ agree with the restriction $\eta^{(k)}_N = \eta^{(k)}|\pi_1(\cS_{+,N})$. For this reason in the approximation procedure above one has to consider the correction functions $\ve(\l,(\tilde\eta_{k,N}-\eta_N^{(k)})\xi)$, $\xi\in [0,1]$. 
\end{remark}

\subsection{An explicit map $\Gamma^* \to \cV(E)$}

We wish to recover the potential function $V$ and its derivatives not via the traditional trace formulas, but rather by way of asymptotic expansion of the $m$-function.  Recall that (DCT) guarantees that $m_+^\a(\l_0)$ is continuous on $\G^*$ for internal points $\l_0\in \cS_+$. We will show that this also holds for $\l_0=0$. As an immediate consequence, we will obtain an explicit expression for $V(x)$ in terms of our special functions on $\G^*$.
\begin{theorem}
\label{t:chi0diff}
	Let $\eta\in\G^*$ be the direction corresponding to the function $\Theta$; cf. \eqref{5amay3}. Define
	\begin{align}\label{def:chi0}
	\chi_0(\a)= im_+^\a(0).
	\end{align}
	Then $\chi_0$ is continuous on $\G^*$. Moreover, $\chi_0$ is differentiable in direction $\eta$. Defining
	\begin{align}\label{def:chi1}
		\chi_1(\a)=\frac{1}{2}(\chi_0(\a)^2 - i\partial_{\eta}\chi_0(\a)).
	\end{align}
	 the potential $V$ corresponding to $m_+^\a$ is given by
	\begin{align}\label{eq:potentialSpecialFunc}
		V(x)=-2\chi_1(\a-\eta x).
	\end{align}
\end{theorem}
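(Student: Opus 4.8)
The plan is to realize $\chi_0(\a-\eta x)$ as the value, at the bottom of the spectrum, of the Weyl function based at the point $x$, and then to differentiate using the Riccati equation. Write $V = V^\a$ for the reflectionless potential with character $\a$, and let $m_+(\l;x)$ denote its Weyl--Titchmarsh function based at $x$. The translation--character correspondence encoded in Theorem \ref{thm:DY} (equivalently, the Weyl solution formula \eqref{eq:Weylsolution}) gives $m_+(\l;x) = m_+^{\a-\eta x}(\l)$; evaluating at the edge via Lemma \ref{lem:limitMfunction} yields $m_+(0;x) = m_+^{\a-\eta x}(0)$, so that $\chi_0(\a-\eta x) = i\,m_+(0;x)$. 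The whole statement thereby reduces to analytic facts about the single function $x \mapsto m_+(0;x)$ for the fixed potential $V$.

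First I would establish continuity of $\chi_0$ on $\G^*$. We are given (from the DCT hypothesis) that $\a \mapsto m_+^\a(\l_0)$ is continuous at interior points $\l_0 \in \cS_+$, and that $e_\a(\l_0)$ is continuous in $\a$. Fixing an interior point, say $\l_0 = -1$, identity \eqref{15may7} expresses
$$m_+^\a(0) = m_+^\a(\l_0) - i\sqrt{\l_0}\,\frac{e_{\a+\fj}(\l_0)}{e_\a(\l_0)}$$
as a difference of quantities continuous in $\a$ (note $e_\a(\l_0)\neq 0$). Hence $\chi_0(\a) = i\,m_+^\a(0)$ is continuous on $\G^*$.

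Next comes the crux: differentiability of $x \mapsto m_+(0;x)$, i.e. of $\chi_0$ in the direction $\eta$. For each $\l<0$ the function $m_+(\l;x)$ is $C^1$ in $x$ and satisfies the Riccati equation $\partial_x m_+(\l;x) = V(x) - \l - m_+(\l;x)^2$, being the logarithmic $x$-derivative of the Weyl solution. The point is to pass to $\l \uparrow 0$. By the proof of Lemma \ref{lem:limitMfunction}, for each fixed $\beta\in\G^*$ the map $\l \mapsto m_+^\beta(\l)$ is real and increasing on $\bbR_-$, so $m_+^\beta(\l) \uparrow m_+^\beta(0)$ as $\l \uparrow 0$. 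Since each $m_+^\beta(\l)$ ($\l<0$) is continuous in $\beta$, its limit $m_+^\beta(0)$ is continuous in $\beta$ by the previous step, and $\G^*$ is compact, Dini's theorem upgrades this to \emph{uniform} convergence in $\beta$, hence uniform in $x$ through $\beta = \a-\eta x$. Consequently the right-hand sides $V(x) - \l - m_+(\l;x)^2$ converge uniformly on compact $x$-intervals to $V(x) - m_+(0;x)^2$ (using continuity of $V$ and $\sup_\beta |m_+^\beta(0)| < \infty$). The standard theorem on differentiating a locally uniform limit then shows $m_+(0;x)$ is differentiable with
$$\partial_x m_+(0;x) = V(x) - m_+(0;x)^2,$$
so $\chi_0$ is differentiable in the direction $\eta$.

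Finally I would assemble the identity. Writing $g(x) = m_+(0;x) = -i\,\chi_0(\a-\eta x)$ and using the chain rule $\tfrac{d}{dx}\chi_0(\a-\eta x) = -(\partial_\eta\chi_0)(\a-\eta x)$, the edge Riccati equation becomes $i\,\partial_\eta\chi_0 = V(x) + \chi_0^2$ at the point $\a-\eta x$, whence $V(x) = i\,\partial_\eta\chi_0 - \chi_0^2 = -(\chi_0^2 - i\,\partial_\eta\chi_0) = -2\chi_1(\a-\eta x)$ with $\chi_1$ as in \eqref{def:chi1}. I expect the main obstacle to be the interchange of the limit $\l \uparrow 0$ with the $x$-derivative at the bottom of the spectrum; the Dini/monotonicity argument, which converts pointwise-in-$\beta$ continuity into uniform-in-$x$ control, is precisely what legitimizes this step, and it is where compactness of $\G^*$ and the continuity of $\chi_0$ are genuinely used.
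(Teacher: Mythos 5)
Your proposal is correct, and both of your departures from the paper's argument are legitimate. For continuity of $\chi_0$, the paper does not use the formula \eqref{15may7}; instead it exploits monotonicity of $m_\pm^\b$ on $\bbR_-$ together with continuity at interior points to get $\liminf_{\b\to\a} m_\pm^{\b}(0)\ge m_\pm^\a(0)$ for \emph{both} signs, and then converts the lower bound for $m_-$ into an upper bound for $m_+$ via $m_+^\a(0)=-m_-^\a(0)$ from Lemma \ref{lem:limitMfunction}. Your route — reading $m_+^\a(0)=m_+^\a(-1)-i\sqrt{-1}\,e_{\a+\fj}(-1)/e_\a(-1)$ off \eqref{15may7} and citing continuity of $\a\mapsto e_\a(-1)$ — is shorter but leans on the continuity of the special functions in $\a$, which is itself a nontrivial consequence of (DCT) (it is essentially the condition (DCT)$'$), whereas the paper's semicontinuity argument needs only interior continuity of $m_+^\a$. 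For the differentiability step, the paper integrates the Riccati equation from $0$ to $x$ at a fixed $\l=\x<0$ and passes to the limit $\x\to 0$ under the integral sign by dominated convergence (the domination coming from $m_+^\b(\x)\le m_+^\b(\l)\le m_+^\b(0)$ and compactness of $\G^*$), arriving directly at the integrated identity $m_+^{\a-\eta x}(0)-m_+^\a(0)=\int_0^x(V(\xi)-(m_+^{\a-\eta\xi}(0))^2)\,\dd\xi$; you instead upgrade the monotone pointwise convergence to uniform convergence via Dini and invoke the theorem on differentiating a locally uniform limit. These are interchangeable — your Dini argument uses exactly the same ingredients (monotonicity in $\l$, continuity of the limit on the compact group) that power the paper's dominated-convergence step — and the final chain-rule computation recovering $V(x)=-2\chi_1(\a-\eta x)$ agrees with the paper's sign conventions.
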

\begin{proof}
	First we prove continuity of $\chi_0$. Since for fixed $\a$, $m_\pm^\a(0)$ exists and $m_\pm^\a$ is increasing on $\bbR_-$, for fixed $\e>0$ there exist $\x<0$ such that
	\begin{align*}
		m_\pm^\a(0)-\e\leq m_\pm^\a(\x).
	\end{align*}
	Let $\{\b_j\}$ such that $\lim\b_j=\a$ and $\{k_j\}$ a subsequence such that 
	\begin{align*}
		\lim\limits_{j\to\infty}m_\pm^{\b_{k_j}}(0)=\liminf_{j\to\infty}m_\pm^{\b_{j}}(0)
	\end{align*}
	By continuity of $m_\pm^\a(\x)$, there exists $j_0$ such that for all $j\geq j_0$ we have
	\begin{align*}
		m_\pm^{\b_{k_j}}(0)\geq 	m_\pm^{\b_{k_j}}(\x)\geq 	m_\pm^{\a}(\x)-\e\geq m_\pm^\a(0)-2\e.
	\end{align*}
	That is,
	\begin{align*}
		\liminf_{j\to\infty}m_\pm^{\b_{k_j}}(0)\geq m_\pm^\a(0)
	\end{align*}
	Using that $m_+^\a(0)=-m_-^\a(0)$ also get
	\begin{align*}
		\limsup_{j\to\infty}m_+^{\b_j}(0)\leq m_+^\a(0),
	\end{align*}
	which proves continuity of $\chi(\a)$. Let $\x<0$. Due to Theorem \ref{thm:DY} 
	\begin{align*}
		m_+^\a(x,\x)=m^{\a-\eta x}(\x).
	\end{align*} 
	Recall the Riccati equation for $m_+^\a$,
	\begin{align*}
		\partial_x m_+^{\a-\eta x}(\l)=V(x)-\l-(m_+^{\a-\eta x}(\l))^2.
	\end{align*}
	That is,
	\begin{align*}
		m_+^{\a-\eta x}(\l)-m_+^{\a}(\l)=\int_0^x\{V(\xi)-\l-(m_+^{\a-\eta \xi}(\l))^2\}\dd\xi.
	\end{align*}
	Since $m_+^\a(\l)$ is continuous on $\G^*$ and $m_+^\a(\l)$ has no pole on $\bbR_-$ we can apply the dominated convergence theorem and pass to the limit to obtain
	\begin{align*}
			m_+^{\a-\eta x}(0)-m_+^{\a}(0)=\int_0^x(V(\xi)-(m_+^{\a-\eta \xi}(0))^2)\dd\xi,
	\end{align*}
	which proves differentiability of $\chi_0$ and \eqref{eq:potentialSpecialFunc}. 
\end{proof}

\subsection{(PW$_{M}$) condition and asymptotics for $e_{\cR}(p,\a)$}\label{ss43}

As the foremost consequence of the  condition (PW$_M$)
\begin{equation}\label{25jan1}
\fw_M:=\sum_{\nabla M(c)=0} M(c)<\infty
\end{equation}
we show asymptotics not only for the ratio
$m^\a_+(\l)$ but for the forming it functions $e_{\cR}(p,\a)$ themself,
see Corollary \ref{p30jan1}. Similarly, we get asymptotics for the generalized Abelian integrals, see Corollary \ref{p30jan4}.
Both are based on the following lemma.
\begin{lemma}
Let $\l_*<-1$, where $-1$ is a normalization point. Then
\begin{equation}\label{25jan2}
\frac{M(\l)}{M(-1)}\ge \frac{G(\l,\l_*)}{G(-1,\l_*)}
\end{equation}
where $\l$ belongs to an arbitrary gap $(a_j,b_j)$, $j\ge 1$.
\end{lemma}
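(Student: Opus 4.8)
The plan is to reduce everything to the sign of a single harmonic comparison function, first maneuvering the logarithmic pole of $G$ out of the region of interest. Set
\[
u(\l)=\frac{M(\l)}{M(-1)}-\frac{G(\l,\l_*)}{G(-1,\l_*)}.
\]
By construction $u$ is harmonic in $\cS_+\setminus\{\l_*\}$, and by the regularity assumption (R) it extends continuously to $\partial\cS_+=E$ with $u|_E=0$, since both $M$ and $G(\cdot,\l_*)$ vanish there. Moreover $u(-1)=0$, while $u(\l)\to+\infty$ as $\l\to\infty$ (the Martin function grows like $\sqrt{|\l|}$, whereas $G(\cdot,\l_*)\to0$) and $u(\l)\to-\infty$ as $\l\to\l_*$. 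The claim \eqref{25jan2} is precisely that $u\ge0$ on every gap $(a_j,b_j)$.

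First I would push the pole out of the picture. Since $\l_*<-1$, the open set $\Omega_+:=\cS_+\cap\{\Re\l>-1\}$ does not contain $\l_*$, so $u$ is genuinely harmonic in $\Omega_+$; and since $E\subset\bbR_+$, every gap lies in $\Omega_+$. The boundary of $\Omega_+$ consists of the set $E$ (where $u=0$), the vertical line $\ell=\{\Re\l=-1\}$ (which lies entirely in $\cS_+$, again because $E\subset\bbR_+$), and the point at infinity, where $u\ge0$ (either $u\to+\infty$ through interior directions or $u\to0$ along the boundary tail of $E$). Hence, by the minimum principle, it suffices to prove $u\ge0$ on $\ell$; then $u\ge0$ throughout $\Omega_+$, in particular on the gaps.

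It remains to establish $u\ge 0$ on the line $\ell$, equivalently that the quotient $\rho=M/G(\cdot,\l_*)$ attains its minimum along $\ell$ at $-1$, and this is where the genuine work sits. Two complementary tools are available. First, $\rho$ is a quotient of two positive harmonic functions, hence solves the elliptic equation $\Delta\rho+2\nabla(\log G(\cdot,\l_*))\cdot\nabla\rho=0$ and obeys a maximum principle; combined with the reflection symmetry $u(\bar\l)=u(\l)$, which forces $\nabla\rho(-1)$ to be horizontal and to point away from $\l_*$, this makes the level curve of $\rho$ through $-1$ tangent to $\ell$. Second, by Lemma \ref{lem:cxGreenconv} one has $M(\l)/M(-1)=\lim_{s\to-\infty}G(\l,s)/G(-1,s)$, so the estimate on $\ell$ reduces to the monotonicity of $s\mapsto G(\l,s)/G(-1,s)$ as $s\downarrow-\infty$; the advantage of this route is that the relevant comparison function, being a difference of two normalized Green's functions, tends to $0$ rather than $+\infty$ at infinity, which removes the competing growth and cleans up the boundary sign analysis. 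In either approach the main obstacle is the same: controlling the sign of the comparison function near the pole so that its negativity set stays on the side $\{\Re\l<-1\}$ and never reaches the gaps. A useful structural observation is that any connected component of $\{u<0\}$ not containing $\l_*$ would carry zero boundary data and hence vanish identically by the maximum principle; thus $\{u<0\}$ is a single region anchored at $\l_*$, and the half-plane split above is exactly what keeps this region away from $\bbR_+$ once the estimate on $\ell$ is secured.
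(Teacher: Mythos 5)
Your setup is reasonable and you are honest about where the difficulty lies, but the proof is not complete: the entire content of the lemma has been pushed into the unproved claim that $u\ge 0$ on the line $\ell=\{\Re\l=-1\}$, and neither of the two ``tools'' you offer actually establishes it. The tangency observation cuts both ways: since $u(-1)=0$, $u_y(-1)=0$ by symmetry, and (generically) $u_x(-1)\neq 0$, the zero level curve of $u$ through $-1$ is tangent to $\ell$ only to first order, and its second-order bending is governed by $u_{xx}(-1)/u_x(-1)$, whose sign you do not control; if the level curve bends into $\{\Re\l>-1\}$, then $u<0$ on $\ell$ arbitrarily close to $-1$ and your minimum-principle step collapses. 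The second route --- writing $M(\l)/M(-1)=\lim_{s\to-\infty}G(\l,s)/G(-1,s)$ and invoking monotonicity of $s\mapsto G(\l,s)/G(-1,s)$ --- replaces the lemma by a statement of exactly the same nature and difficulty (a pointwise comparison of ratios of Green functions normalized at $-1$), again asserted rather than proved. There are also secondary issues (the minimum principle on the unbounded region $\Omega_+$ needs a Phragm\'en--Lindel\"of-type justification at infinity, and the degenerate case $u_x(-1)=0$ must be ruled out or handled), but the missing positivity on $\ell$ is the essential gap.

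The paper's proof avoids this difficulty entirely by working constructively in the finite-gap approximation $\cS_{+,N}=\bbC\setminus E_N$. It forms the Abelian differential $\dd\O=\dd\T_N+\vk\,\dd\log\Phi_N(\cdot,\l_*)$ with $\vk=M_N(-1)/G_N(-1,\l_*)$, so that $\Im\O=M_N-\vk G_N$ is $M_N(-1)$ times the finite-gap analogue of your $u$, and writes $\dd\O=\pi(\l)\,\dd\l/((\l-\l_*)\sqrt{s_N(\l)})$ with $\pi$ monic of degree $N+1$. Counting zeros --- one in each of the $N$ gaps forced by $\int_{a_j}^{b_j}\dd\O=0$, and one in $(-1,0)$ forced by $\O(0)=\O(-1)=0$ --- accounts for all $N+1$ critical points, so $\O$ is a Schwarz--Christoffel integral mapping onto a comb domain, from whose geometry $\Im\O\ge 0$ on every gap is read off directly; the lemma follows by letting $N\to\infty$. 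Note that this argument yields positivity only where it is claimed (on the gaps), not on a whole half-plane; to salvage your approach you would need a genuinely new argument for the sign of $u$ on $\ell$, and it is not clear that that statement is even true.
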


\begin{proof}
Let $E_N = \bbR_+ \setminus \cup_{j \leq N} (a_j,b_j)$, and consider the Abelian differential on $\cS_{+,N} = \bbC \setminus E_N$
$$
\dd\O(\l)=\dd\T_N(\l)+\vk \dd\log\Phi_N(\l,\l_*)=\frac{\pi(\l)\dd\l}{(\l-\l_*)\sqrt{s_N(\l)}}, \quad \vk=\frac{M_N(-1)}{G_N(-1,\l_*)}.
$$
Here $\pi(\l)$ is a certain monic polynomial of degree $N+1$. We can localize all critical points of the given Abelian differential, i.e. the zeros of $\pi(\l)$. Indeed, by the normalization condition 
$$
\int_{a_j}^{b_j}\dd\O(\l)=0,
$$
within each gap we have at least one zero of $\pi(\l)$. By the normalization $\O(\l)=\int_0^\l d\O$ we have $\O(0)=0$. But due to the definition of $\vk$ we have $\O(-1)=0$. Thus the interval $(-1,0)$ also contains a critical point. Since $\deg\pi=N+1$ we listed positions of all critical points: each gap and the interval $(-1,0)$ contains exactly one simple critical point. Thus $\O$ is an integral of Schwarz-Christoffel type. It maps conformally the upper half-plane on a domain sketched below in Fig. \ref{fig1}.
We get
$$
\frac{M_N(\l)}{M_N(-1)}- \frac{G_N(\l,\l_*)}{G_N(-1,\l_*)}= \frac{1}{M_N(-1)}\Im \O(\l)\ge 0, \quad \l\in (a_j,b_j).
$$
Passing to the limit as $N\to \infty$ for fixed $\l\in(a_j,b_j)$ and $\l_*<1$, we get \eqref{25jan2}.
\end{proof}
\begin{figure}[htbp] 
\begin{center}
\includegraphics[scale=0.3]
{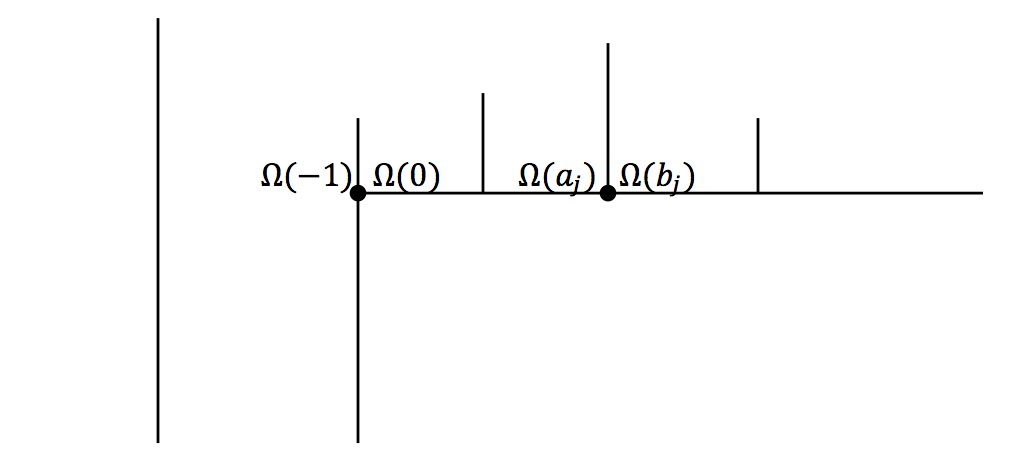}
\caption{Comb-domain $\O(\bbC_+)$. $\O(0)=\O(-1)$, $\O(a_j)=\O(b_j)$, $j\ge 1$.}\label{fig1}
\label{comb}
\end{center}
\end{figure}

As a consequence, we have the following corollary:
\begin{corollary}\label{c:qwtowid}
The condition \eqref{25jan1} implies the Widom condition (1.11).
\end{corollary}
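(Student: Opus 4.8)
The plan is to convert the pointwise comparison \eqref{25jan2} just established into a summable upper bound for the Green function at its critical points. I fix the normalization point $-1$ and choose any $\l_*<-1$; I will verify the Widom condition \eqref{18jan4} at the single pole $\l_0=\l_*$, which by the ``some (and hence all)'' clause in (PW) suffices. Recall that since $\l_*\in\bbR_-$, all critical points of $G(\cdot,\l_*)$ are real, with exactly one, $\x_j(\l_*)\in(a_j,b_j)$, in each gap. Hence the Widom sum \eqref{18jan4} is precisely $\sum_{j\ge 1}G(\x_j(\l_*),\l_*)$.

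First I would rewrite \eqref{25jan2} as the upper bound
$$
G(\l,\l_*)\le \frac{G(-1,\l_*)}{M(-1)}\,M(\l),\qquad \l\in(a_j,b_j),
$$
valid in each gap, where the constant $G(-1,\l_*)/M(-1)$ is a fixed finite positive number. Evaluating at $\l=\x_j(\l_*)$ and then bounding $M(\x_j(\l_*))\le M(c_j)$, summing over $j$ yields
$$
\sum_{j\ge 1}G(\x_j(\l_*),\l_*)\le \frac{G(-1,\l_*)}{M(-1)}\sum_{j\ge 1}M(\x_j(\l_*))\le \frac{G(-1,\l_*)}{M(-1)}\sum_{j\ge 1}M(c_j)=\frac{G(-1,\l_*)}{M(-1)}\,\fw_M,
$$
which is finite by the hypothesis \eqref{25jan1}. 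This establishes \eqref{18jan4} at $\l_0=\l_*$, and hence the Widom condition (PW).

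The main obstacle I anticipate is justifying the second inequality $M(\x_j(\l_*))\le M(c_j)$, i.e.\ that the restriction of the Martin function to each gap attains its maximum exactly at the Martin critical point $c_j$. This is where the comb representation $M=\Im\Theta$ from \eqref{5amay3} is decisive: on the closed gap $[a_j,b_j]$ the function $M$ is continuous, vanishes at the endpoints $a_j,b_j\in E$, is positive in the interior, and has the single critical point $c_j$; equivalently, $\Theta$ traces the needle over $\pi\eta_j$, rising from $0$ to the tip $h_j=M(c_j)$ and back. Uniqueness of the critical point together with these boundary values forces $M$ to increase on $(a_j,c_j)$ and decrease on $(c_j,b_j)$, so $M(c_j)$ is the maximum of $M$ on the gap and in particular dominates $M(\x_j(\l_*))$. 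With this monotonicity in hand, the remaining steps are the elementary rearrangement and summation indicated above.
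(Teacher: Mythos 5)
Your proof is correct and is exactly the argument the paper intends: the corollary is stated without proof as an immediate consequence of the comparison \eqref{25jan2}, and your fill-in — evaluating at the Green critical points $\x_j(\l_*)$, bounding $M(\x_j(\l_*))\le M(c_j)$ via the fact that $c_j$ is the unique critical point of $M$ on the gap where $M$ vanishes at both endpoints, and summing against $\fw_M$ — is precisely the step the authors take for granted (they invoke the same bound $M(c_j)\ge M(\l_j)$ explicitly in the proof of the following theorem).
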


Moreover, we can explicitly compute the second term in asymptotics.

\begin{theorem} Assume that \eqref{25jan1} holds. Let $D=\{(\l_j,\e_j)\}\in\cD(E)$, to which we associate 
the Blaschke product
$$
\Phi_D(\l)=\prod_{j\ge 1}\Phi (\l,\l_j)^{\frac{1+\e_j}2},
$$
assuming the normalizations $\Phi_{\l_j}(\l)\ge 0$ in $\bbR_-$. Then the following
limit exists
\begin{equation}\label{25jan3}
\lim_{\l\to-\infty}\frac {M(-1)}{G(\l,-1)}\log\frac{1}{\Phi_D(\l)}=\sum_{j\ge 1}\frac{1+\e_j}{2}M(\l_j)
\end{equation}
\end{theorem}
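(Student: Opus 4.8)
The plan is to reduce \eqref{25jan3} to a termwise computation and then justify interchanging the limit with the infinite sum, the latter step being where (PW$_M$) is essential. First I would pass from the Blaschke product to a sum of Green functions. On $\bbR_-$ the normalization $\Phi_{\l_j}(\l)\ge 0$ together with $|\Phi(\l,\l_j)|=e^{-G(\l,\l_j)}$ from \eqref{18jan10} gives
\begin{equation*}
\log\frac{1}{\Phi_D(\l)}=\sum_{j\ge 1}\frac{1+\e_j}{2}\,G(\l,\l_j),\qquad \l\in\bbR_-,
\end{equation*}
so that \eqref{25jan3} becomes equivalent to
\begin{equation*}
\lim_{\l\to-\infty}\sum_{j\ge 1}\frac{1+\e_j}{2}\,\frac{M(-1)}{G(\l,-1)}\,G(\l,\l_j)=\sum_{j\ge1}\frac{1+\e_j}{2}\,M(\l_j).
\end{equation*}

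Next I would establish the termwise limit. Using the symmetry $G(\l,\l_j)=G(\l_j,\l)$ and the characterization of the Martin function at infinity as the limit of normalized Green functions (this is exactly the computation carried out in the proof of Lemma \ref{lem:cxGreenconv}, read with the evaluation point $\l_j$ fixed and the pole $\l\to-\infty$), one obtains for each fixed $j$
\begin{equation*}
\lim_{\l\to-\infty}\frac{M(-1)}{G(\l,-1)}\,G(\l,\l_j)
=M(-1)\lim_{\l\to-\infty}\frac{G(\l_j,\l)}{G(-1,\l)}
=M(\l_j).
\end{equation*}

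The crucial point — and the place where \eqref{25jan1} enters — is the uniform majorant needed to pull the limit inside the sum. Here I would invoke the preceding lemma \eqref{25jan2} with $\l_*=\l<-1$ and with the gap point taken to be $\l_j\in(a_j,b_j)$, which yields precisely the one-sided bound
\begin{equation*}
\frac{M(-1)}{G(\l,-1)}\,G(\l,\l_j)\le M(\l_j)\le M(c_j)\qquad(\l<-1),
\end{equation*}
where the second inequality holds because $M$ vanishes at the gap endpoints $a_j,b_j\in E$ and attains its maximum over $(a_j,b_j)$ at its unique critical point $c_j$. By (PW$_M$) we have $\sum_{j\ge1}M(c_j)=\fw_M<\infty$, so $\{\tfrac{1+\e_j}{2}M(c_j)\}_{j\ge1}$ is a summable majorant that is independent of $\l$.

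Finally I would conclude by dominated convergence for series (Tannery's theorem): the termwise limits from the second step together with the summable majorant from the third permit interchanging $\lim_{\l\to-\infty}$ with $\sum_{j\ge1}$, which gives \eqref{25jan3}. The only genuine obstacle is this domination step, and it is resolved entirely by recognizing that \eqref{25jan2} furnishes a gap-by-gap upper bound on $\tfrac{M(-1)}{G(\l,-1)}G(\l,\l_j)$ that is both uniform in the evaluation point and summable thanks to (PW$_M$); the remaining ingredients are routine once this bound is in hand.
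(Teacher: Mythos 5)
Your proposal is correct and follows essentially the same route as the paper: rewrite $\log(1/\Phi_D(\l))$ as the sum $\sum_j \frac{1+\e_j}{2}G(\l_j,\l)$, identify the termwise limit $M(-1)\lim_{\l\to-\infty}G(\l_j,\l)/G(-1,\l)=M(\l_j)$, and use \eqref{25jan2} together with $M(\l_j)\le M(c_j)$ and (PW$_M$) to produce the summable majorant that justifies interchanging limit and sum. The only difference is that you spell out the dominated-convergence step and the role of the critical point $c_j$ slightly more explicitly than the paper does.
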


\begin{proof}
 For a fixed $\l<-1$,  we have
\begin{equation}\label{25jan4}
\frac 1{G(\l,-1)}
\log\frac 1{\Phi_D(\l)}= \sum_{j\ge1}\frac{1+\e_j} 2\frac{G(\l_j,\l)}{G(-1,\l)}.
\end{equation}
Recall that 
$$
\lim_{\l\to-\infty} \frac{G(\l_j,\l)}{G(-1,\l)}=\frac{M(\l_j)}{M(-1)}.
$$
Note that $M(c_j)\ge M(\l_j)$ for an arbitrary $\l_j\in(a_j,b_j)$. By \eqref{25jan2} and \eqref{25jan1} the sequence $\left\{\frac{M(c_j)}{M(-1)}\right\}_{j\ge 1}$ form an integrable majorant . Therefore we can pass to the limit in \eqref{25jan4} and we obtain \eqref{25jan3}.
\end{proof}

\begin{corollary}\label{p30jan1}
Assume that \eqref{25jan1} holds and
\begin{equation}\label{25jan11}
\int_{\bbC_+\setminus E} \dd\xi^{1/2}<\infty.
\end{equation}
Then 
\begin{equation}\label{25jan12}
\lim_{y\to\infty}y \log e_{\cR}(p(iy),\a(D))=\frac 12\sum_{j\ge 1}(\e_j M(\l_j)-M(c_j))
\end{equation}

\end{corollary}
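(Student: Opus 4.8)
The plan is to pass to $\l=\bl(iy)$, which runs to $-\infty$ along $\bbR_-$ as $y\to\infty$, and to analyze $\log e_{\cR}(p(iy),\a(D))=\log e(\l,D)$ directly from the product formula \eqref{19ja1}. On $\bbR_-$ every factor is positive (since $\l_j,c_j>0>\l$ and, with the normalizations $\Phi_{\l_j}(\l),\Phi_{c_j}(\l)\ge 0$, we have $\log\Phi(\l,\cdot)=-G(\l,\cdot)$), so I may take logarithms term by term. Collecting the net exponents — $\tfrac12$ for each $\Phi(\l,c_j)$ and $\tfrac{\e_j}{2}$ for each $\Phi(\l,\l_j)$ — gives the clean splitting
\[
\log e(\l,D)=\underbrace{\tfrac12\sum_{j\ge1}\log\frac{1-\l_j/\l}{1-c_j/\l}}_{\text{algebraic part}}-\tfrac12\sum_{j\ge1}\bigl(\e_j\,G(\l,\l_j)+G(\l,c_j)\bigr).
\]
Everything then reduces to understanding $y$ times a single Green value $G(\bl(iy),\zeta)$ as $y\to\infty$, and to controlling the interchange of the limit with the sum over $j$.

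The key asymptotic is the calibrated relation $\lim_{y\to\infty} y\,G(\bl(iy),\zeta)=M(\zeta)$ for fixed $\zeta\in\cS_+$. I would obtain it in two steps: first, the Martin-kernel identity $\lim_{\l\to-\infty}G(\l,\zeta)/G(\l,-1)=M(\zeta)/M(-1)$ (the Green function is symmetric and $-\infty$ is the point at which $M$ is realized as the limit of normalized Green functions), which is already the engine behind \eqref{25jan3}; and second, the single calibration $\lim_{y\to\infty}y\,G(\bl(iy),-1)=M(-1)$, which ties the positive multiplier in the normalization of $\bl$ to that of $M$ and follows from the behaviour of the Green function of the cover near the puncture (cf. \cite{VoYu16}). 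Together these give the displayed limit for every fixed $\zeta$.

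With this in hand the algebraic part is harmless: $\log\frac{1-\l_j/\l}{1-c_j/\l}=O\!\bigl((\l_j-c_j)/\l\bigr)$, and since $M(\bl(iy))=\Im\Theta(\bl(iy))\sim\sqrt{|\l|}$ grows while $y\,G(\bl(iy),-1)$ stays bounded, one has $y=O(\sqrt{|\l|})$; hence $y$ times the algebraic sum is $O(|\l|^{-1/2})$ times a series which converges by \eqref{25jan11}, so it tends to $0$. For the Green part I would pass to the limit term by term, using $y\,G(\bl(iy),\l_j)\to M(\l_j)$ and $y\,G(\bl(iy),c_j)\to M(c_j)$, and justify the exchange by dominated convergence; collecting these term-wise limits according to the net exponents above yields the right-hand side of \eqref{25jan12}.

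The crux — and the main obstacle — is the dominated-convergence step, i.e.\ producing a summable, $y$-uniform majorant for the terms $y\,\e_j G(\bl(iy),\l_j)$ and $y\,G(\bl(iy),c_j)$. This is exactly where \eqref{25jan2} and (PW$_M$) enter. Applying \eqref{25jan2} with the gap point in place of $\l$ and $\l_*=\bl(iy)<-1$, together with symmetry of $G$ and the maximality $M(c_j)\ge M(\l_j)$ in each gap, gives
\[
\frac{G(\bl(iy),\l_j)}{G(\bl(iy),-1)}\le\frac{M(\l_j)}{M(-1)}\le\frac{M(c_j)}{M(-1)},
\]
so that $y\,G(\bl(iy),\l_j)\le\bigl(y\,G(\bl(iy),-1)\bigr)M(c_j)/M(-1)\le 2M(c_j)$ for all large $y$ and all $j$, and the same bound controls the $c_j$-terms. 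Since $\sum_j M(c_j)=\fw_M<\infty$ is precisely \eqref{25jan1}, the majorant is summable and the interchange is legitimate. Thus (PW$_M$) is used in an essential way, while \eqref{25jan11} is needed only to kill the algebraic part; calibrating the constant in $y\,G(\bl(iy),-1)\to M(-1)$ is the one ingredient I would import rather than derive on the spot.
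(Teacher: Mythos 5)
Your overall route is the paper's: split $\log e(\l,D)$ into the Blaschke (Green--function) part and the outer (algebraic) part, pass to the limit termwise in the Blaschke part by dominated convergence with the majorant $M(c_j)/M(-1)$ coming from \eqref{25jan2}, summable precisely by (PW$_M$), and show the outer part contributes nothing at order $1/y$. The Green part of your argument is essentially the paper's preceding theorem (formula \eqref{25jan3}), including the use of $M(c_j)\ge M(\l_j)$ and the calibration $y\,G(\bl(iy),-1)\to M(-1)$ that the paper leaves implicit; that half is fine.

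The genuine gap is in the algebraic part. From $\log\frac{1-\l_j/\l}{1-c_j/\l}=O((\l_j-c_j)/\l)$ and $y=O(\sqrt{|\l|})$ you reduce to $O(|\l|^{-1/2})\sum_j|\l_j-c_j|$ and assert that this series ``converges by \eqref{25jan11}.'' It need not: \eqref{25jan11} gives $\sum_j(\sqrt{b_j}-\sqrt{a_j})<\infty$, while $\sum_j|\l_j-c_j|$ is controlled only by $\sum_j(b_j-a_j)=\sum_j(\sqrt{b_j}+\sqrt{a_j})(\sqrt{b_j}-\sqrt{a_j})$, which can diverge when the gaps escape to infinity (e.g.\ $a_j=4^j$, $b_j-a_j=2^j/j^2$). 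As written the step requires the strictly stronger $k=1/2$ gap condition of \eqref{21nov0}. The paper's proof avoids this by the substitution $\xi=\tau^2$: writing the outer part as $\int\frac{\tau\mu}{\tau^2-\mu^2}\,\dd\tau$ over $\tau^2\in\bbR_+\setminus E$ and using the uniform kernel bound $\left|\frac{2\tau t}{\tau^2+t^2}\right|\le 1$, so that the finite mass $\sum_j(\sqrt{b_j}-\sqrt{a_j})$ --- exactly \eqref{25jan11} --- is the correct dominating quantity and dominated convergence yields \eqref{25jan7}; the same device repairs your estimate. A secondary point: taking the termwise limits in your own displayed decomposition produces $-\frac12\sum_j(\e_jM(\l_j)+M(c_j))$, which is also what \eqref{25jan3} combined with the product \eqref{19ja1} gives, and which agrees with the right-hand side of \eqref{25jan12} only after flipping the sign of $\e_j$; you assert the match rather than reconcile the two, so either your exponent bookkeeping or the printed formula needs to be flagged explicitly.
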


\begin{proof}
By \eqref{25jan3} we have that an arbitrary Blaschke product in the product representation (1.20) has this property. 

Recall that the function $R(\l)$, see (2.7), was defined as
$$
R(\l)=R_{D}(\l)=C\frac{i}{\sqrt{\l}}e^{-\int_0^\infty\left\{\frac{1}{\xi-\l}-\frac{\xi}{1+\xi^2}\right\}\ff(\xi)\dd\xi}
$$
where
$$
2 \ff(\xi)=\begin{cases}
-1, & \xi\in (a_j,\l_j)\\
1, & \xi\in(\l_j,b_j)\\
0,& \text{otherwise}
\end{cases}
$$
This decomposition is valid for an arbitrary set of gaps and positions of $\l_j\in[a_j,b_j]$.  We wish to consider specifically the exponential part of $R(\l)$.  Under the assumption \eqref{25jan11} we can specify $C$ by the normalization condition 
$$
\lim_{\mu\to\infty} (-2i\mu) R_D(\mu^2)= 1,
$$
where $\mu$ goes to infinity along imaginary axis. Moreover, in this case for the integral 
$$
\mu\log\left(\frac{2\mu} i R_D(\mu^2)\right)=\int_{\tau^2\in\bbR_+\setminus E}\frac{\tau\mu}{\tau^2-\mu^2} \dd\tau
$$
we have
\begin{equation}\label{25jan7}
\lim_{\mu\to\infty}\mu\log\left(\frac{2\mu} i R_D(\mu^2)\right)=0
\end{equation}
due to the fact that
$
\frac{2\tau\Im\mu}{\tau^2+\Im\mu^2}\le 1
$.

As a combination of \eqref{25jan3} and \eqref{25jan7} we get \eqref{25jan12}. Moreover the corresponding leading term depends only on the Blaschke factor component $\Phi_D(\l)$.
\end{proof}

Similarly we get
\begin{corollary}\label{p30jan4}
Conditions (PW$_M$) and ($k$-GLC) imply
\begin{equation}\label{30jan4}
\lim_{y\to\infty}(\t^{(k)}_{\cR}(p(iy))-(iy)^{2k+1})=0.
\end{equation}

\end{corollary}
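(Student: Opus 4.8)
The plan is to pass to the variable $\l=\bl(iy)\in\bbR_-$, $\l\to-\infty$, and to split the difference into the principal part coming from $\l^{k+\frac12}$ and a correction coming from the Green-function integral in $M_k$. By the symmetry $M_k(\l)=M_k(\bar\l)$ of the construction, $\Theta_k$ sends $\bbR_-$ into the imaginary edge of its comb, so $\Theta_k(\l)$ is purely imaginary there; the same holds for $\l^{k+\frac12}=i(-1)^k|\l|^{k+\frac12}$. Hence the difference is purely imaginary and its imaginary part is exactly $M_k(\l)-\Im\l^{k+\frac12}$, giving
\[
\t_{\cR}^{(k)}(p(iy))-\l^{k+\frac12}=\Theta_k(\l)-\l^{k+\frac12}=\frac{i}{\pi}\int_{\bbR_+\setminus E}G(\l,\xi)\,\dd\xi^{k+\frac12}.
\]
Since $\l^{k+\frac12}=(\sqrt\l)^{2k+1}=\mu_{\cR}(p(iy))^{2k+1}$, and $\mu_{\cR}(p(iy))=\sqrt{\bl(iy)}$ is the local coordinate at the point at infinity normalized to play the role of $iy$, the claim \eqref{30jan4} reduces to showing that this Green-function correction tends to $0$.

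For the correction I would argue exactly as in Corollary \ref{p30jan1}. Applying the comparison Lemma \eqref{25jan2} with the running gap point in place of $\l$ and the pole $\l_*=\l\to-\infty$ gives, for every $\xi\in(a_j,b_j)$,
\[
G(\l,\xi)=G(\xi,\l)\le \frac{G(-1,\l)}{M(-1)}\,M(\xi).
\]
On each gap $M(\xi)\le M(c_j)$, and the condition (PW$_M$), namely $\fw_M=\sum_jM(c_j)<\infty$ in \eqref{25jan1}, forces $\sup_jM(c_j)<\infty$; combined with ($k$-GLC), $\sum_j(b_j^{k+\frac12}-a_j^{k+\frac12})<\infty$ in \eqref{21nov0}, this yields the finite majorant $\int_{\bbR_+\setminus E}M(\xi)\,\dd\xi^{k+\frac12}<\infty$. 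Therefore
\[
\frac1\pi\int_{\bbR_+\setminus E}G(\l,\xi)\,\dd\xi^{k+\frac12}\le \frac{G(-1,\l)}{\pi M(-1)}\int_{\bbR_+\setminus E}M(\xi)\,\dd\xi^{k+\frac12}\longrightarrow 0,
\]
because $G(-1,\l)\to0$ as the pole $\l$ recedes to the boundary point at infinity. This is the precise analogue of the vanishing of $\mu\log\!\big(\tfrac{2\mu}{i}R_D(\mu^2)\big)$ exploited in Corollary \ref{p30jan1}, and is the substantive step in which both hypotheses are used.

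It remains to justify rigorously the identification $\l^{k+\frac12}=\mu_{\cR}(p(iy))^{2k+1}$ with $(iy)^{2k+1}$, and this is where I expect the main obstacle to lie. The case $k=0$ is the normalization $\mu_{\cR}(p(iy))\to iy$, i.e. $M(\bl(iy))\sim y$, a consequence of the pure-point structure of the lifted Martin function; the difficulty is that raising to the $(2k+1)$-st power amplifies the error, so a naive bound on $\sqrt{\bl(iy)}-iy$ is too weak. The key structural fact I would use is that $\dd\Theta^{(k)}$ carries a single pole term at infinity (its product representation has no intermediate principal parts), so that $\Theta_k(\l)-\l^{k+\frac12}$ contributes nothing beyond the Green correction already controlled above. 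To make the leading matching quantitative I would fall back on the finite-gap approximation developed earlier, passing to the limit via the pointwise convergence $M_{k,N}\to M_k$ together with $\widetilde\eta_{k,N}\to\eta^{(k)}$; the role of ($k$-GLC) and (PW$_M$) is then to render this approximation locally uniform near the point at infinity, which is exactly what is needed to interchange the limits $N\to\infty$ and $y\to\infty$ without destroying the vanishing of the difference.
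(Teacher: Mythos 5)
Your main estimate is precisely the paper's proof: on $\bbR_-$ the difference $\t^{(k)}_{\cR}-\l^{k+\frac12}$ is purely imaginary and equals $\frac{i}{\pi}\int_{\bbR_+\setminus E}G(\xi,\l)\,\dd\xi^{k+\frac12}$, and the paper bounds this by $\frac{G(\l,-1)}{\pi M(-1)}\int_{\bbR_+\setminus E}M(\xi)\,\dd\xi^{k+\frac12}$ via the comparison \eqref{25jan2} with the points arranged exactly as you describe, then uses $G(\l,-1)\to 0$; your use of (PW$_M$) through $\sup_{\xi\in\bbR_+\setminus E}M(\xi)<\infty$ together with ($k$-GLC) to produce the finite majorant is the same.

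The only divergence is your final paragraph, and the obstacle you anticipate there is not actually present. In this corollary --- as in Corollary \ref{p30jan1} and in \eqref{pe17octc}, where ``$\mu$ goes to infinity along the imaginary axis'' --- the parameter $iy$ is the value of the coordinate $\mu_{\cR}=\sqrt{\l_{\cR}}$; that is, $p(iy)$ is the point with $\l_{\cR}(p)=-y^2$, so $(iy)^{2k+1}=\l^{k+\frac12}$ holds identically and there is no error to amplify by raising to the $(2k+1)$-st power. (Under your alternative reading, with $iy$ a point of the universal cover, the statement could not even be normalization-independent, since $\bl$ is fixed only up to a positive multiplier.) Consequently the finite-gap approximation and the interchange of the limits $N\to\infty$ and $y\to\infty$ that you sketch are unnecessary; had they been necessary, that sketch would be the weak point of your argument, since no local uniformity near infinity is actually established there.
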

\begin{proof}
For $\l<-1$ we have
$$
M^{(k)}(\l)-\Im \l^{k+\frac 1 2 }=\frac{G(\l,-1)}{\pi}\int_{\bbR_+\setminus E}\frac{G(\xi,\l)}{G(-1,\l)}d\xi^{k+\frac 1 2}
\le \frac{G(\l,-1)}{\pi}\int_{\bbR_+\setminus E}\frac{M(\xi)d\xi^{k+\frac 1 2}}{M(-1)}
$$
Since $\sup_{\xi\in \bbR_+\setminus E}M(\xi)<\infty$, we get \eqref{30jan4}.
\end{proof}

\begin{remark}
Recall that the complex Martin function $\T(\l)$ is closely related to  the \textit{integral density of states} $\rho(\xi)$, which can be defined by  
\begin{equation}\label{28jan1}
\T(\l_0)=\int_0^{\l_0}\int_E\frac{\rho(\xi)}{\xi-\l}\frac{\dd\xi}{\sqrt{\xi}} d\l.
\end{equation}
That is, $\rho(\xi)$ for almost every $\xi\in E$ can be defined via  the boundary values of $\T(\l)$, $\l=\xi+i\e$ and
 $$
 \rho(\xi)=\lim_{\e\to 0}\prod_{j\ge 1}\frac{1-c_j/\l}{\sqrt{(1-a_j/\l)(1-b_j/\l)}}
 $$
 In this term for $\fw_M$ we get
\begin{align}\label{eq:qwtoentrpy}
 \fw_M =\sum_{j\ge 1}\left(\int_{c_j}^{b_j}\dd\l\int_E\frac{\rho(\xi) \dd\xi}{\l-\xi}-
 \int_{a_j}^{c_j}\dd\l\int_E\frac{\rho(\xi) \dd\xi}{\l-\xi} \right)=-2\int_E \log\rho(\xi)\frac{\rho(\xi)\dd\xi}{\sqrt{\xi}}.
\end{align}
\end{remark}

\subsection{The KdV hierarchy}

In this subsection we follow basically Dubrovin et al. \cite{DKN}, but to be consistent in the current approach, we avoid the use of symplectic structure.

We define
$$
\chi(p,\a)=\frac{e_{\cR}(p,\a+\fj)}{e_{\cR}(p,\a)}
$$
Its power series expansion at infinity in the local coordinate $1/\mu$ generates the following collection of functions on $\G^*$,
\begin{equation}\label{no1}
\chi(p(\mu),\a)=1+\frac{\chi_0(\a)}{\mu}+\dots+\frac{\chi_{2k}(\a)}{\mu^{2k+1}}+\dots
\end{equation}
We will justify this expansion in the next subsection.
Recall that in this notations
\begin{equation}\label{no2}
(\t_{\cR}(p)+i\pd_\eta)e_{\cR}(p,\a)=\mu_{\cR}(p) e_{\cR}(p,\a+\fj)-\chi_0(\a)e_{\cR}(p,\a)
\end{equation}
Let $\bs_2$ be the shift operator in the complex Euclidian space $\bbC^{2k+2}$, i.e., $\bs_2\delta_n=\d_{n+1}$, $n\le 2k$, where $\{\d_n\}_{n=0}^{2k+1}$ is the standard basis in this space. Then the expansion \eqref{no1} generates the matrix
$$
\chi(\a)=I+\chi_0(\a)\bs_2+\dots+\chi_{2k}(\a) \bs_2^{2k+1}.
$$
We will use the even-odd decomposition of this space $\bbC^{2k+2}=\bbC^{k+1}\oplus \bbC^{k+1}$. In particular, with respect to this  decomposition
$$
\bs_{2}=\begin{bmatrix}0&\bs\\
I&0
\end{bmatrix},\quad
\chi(\a)=
\begin{bmatrix}\chi_o(\a)& \bs\chi_e(\a)\\
\chi_e(\a)& \chi_o(\a)
\end{bmatrix},
$$
where $\bs$ is the standard shift in $\bbC^{k+1}$ and
$$
\chi_o(\a)=
\begin{bmatrix}
1 & & & \\
\chi_1(\a)&1& & \\
 \ddots& \ddots& \ddots&  \\
 \chi_{2k-1}(\a)&\ddots&\chi_1(\a)&1\\
\end{bmatrix},
\
\
\chi_e(\a)=
\begin{bmatrix}
\chi_0(\a) & & & \\
\chi_2(\a)&\chi_0(\a)& & \\
 \ddots& \ddots& \ddots&  \\
 \chi_{2k}(\a)&\ddots&\chi_2(\a)&\chi_0(\a)\\
\end{bmatrix}.
$$

\begin{lemma} Assume that for the generalized Abelian integral $\t_{\cR}^{(k)}(p)$ we have \eqref{p17oct}.
Then the vectors $A(\a), B(\a)\in \bbC^{k+1}$ formed by the coefficients of these polynomials 
are given by
\begin{equation}\label{15nov1}
A(\a)=\chi_o(\a)^{-1}\d_0,\quad B(\a)=\chi_e(\a) \chi_o(\a)^{-1}\d_0.
\end{equation}

\end{lemma}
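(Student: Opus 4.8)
The plan is to divide the identity \eqref{p17oct} by $e_{\cR}(p,\a)$ and then read off the coefficients $\cA_j(\a)$ and $B_j(\a)$ by matching the asymptotic expansions of both sides in the local coordinate $1/\mu$ at infinity, where $\mu=\mu_{\cR}(p)$ and $\l_{\cR}(p)=\mu^2$. Writing $\chi(p,\a)=e_{\cR}(p,\a+\fj)/e_{\cR}(p,\a)$ as in \eqref{no1} and using $\mu_{\cR}(p)=\mu$, the identity \eqref{p17oct} becomes
\begin{equation*}
\t_{\cR}^{(k)}(p)+i\,\frac{\pd_{\eta^{(k)}}e_{\cR}(p,\a)}{e_{\cR}(p,\a)}=\mu\,A_k(\mu^2,\a)\,\chi(p,\a)-B_k(\mu^2,\a).
\end{equation*}
First I would determine the non-negative powers of $\mu$ on the left. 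By Corollary \ref{p30jan4} the Abelian integral satisfies $\t_{\cR}^{(k)}(p(\mu))=\mu^{2k+1}+o(1)$, so its entire polynomial part is the single monomial $\mu^{2k+1}$. The logarithmic-derivative term is $O(1/\mu)$: indeed $e_{\cR}(p,\a)\to 1$ as $\mu\to\infty$ and, by Corollary \ref{p30jan1}, $\log e_{\cR}(p(iy),\a)=c(\a)/y+o(1/y)$, so after justifying that differentiation in the character direction $\eta^{(k)}$ preserves this order one gets $\pd_{\eta^{(k)}}\log e_{\cR}(p,\a)=O(1/\mu)$, which contributes nothing to the non-negative powers. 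Hence the polynomial part of the left-hand side is exactly $\mu^{2k+1}$.

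Next I would expand the right-hand side. With $\cA_0=1$ and the convention $\chi_{-1}=1$,
\begin{equation*}
\mu A_k(\mu^2,\a)\chi(p,\a)=\sum_{j=0}^{k}\cA_j(\a)\mu^{2k-2j+1}\Big(1+\sum_{n\ge0}\chi_n(\a)\mu^{-n-1}\Big),\qquad B_k(\mu^2,\a)=\sum_{j=0}^{k}B_j(\a)\mu^{2k-2j}.
\end{equation*}
Collecting the odd powers $\mu^{2k+1-2r}$ (which occur only in the first product, since $B_k$ is even in $\mu$) and equating to the left-hand side gives, for $r=0,\dots,k$,
\begin{equation*}
\sum_{j=0}^{r}\cA_j(\a)\,\chi_{2(r-j)-1}(\a)=\d_{r,0},
\end{equation*}
which is precisely the system $\chi_o(\a)A(\a)=\d_0$; since $\chi_o(\a)$ is lower unitriangular it is invertible and $A(\a)=\chi_o(\a)^{-1}\d_0$. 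Collecting instead the even powers $\mu^{2k-2r}$ (which receive no contribution from the polynomial part $\mu^{2k+1}$ on the left) yields, for $r=0,\dots,k$,
\begin{equation*}
B_r(\a)=\sum_{j=0}^{r}\cA_j(\a)\,\chi_{2(r-j)}(\a),
\end{equation*}
that is $B(\a)=\chi_e(\a)A(\a)=\chi_e(\a)\chi_o(\a)^{-1}\d_0$, as claimed.

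The main obstacle is the second point of the first step: rigorously showing that the character-derivative term $i\pd_{\eta^{(k)}}\log e_{\cR}(p,\a)$ has vanishing polynomial part, i.e.\ that one may differentiate the asymptotics of Corollary \ref{p30jan1} in $\a$ while retaining the $O(1/\mu)$ bound. Everything else is bookkeeping of the two triangular Toeplitz structures $\chi_o,\chi_e$, and the normalization $e_{\cR}\to 1$ together with the finite gap-length condition \eqref{21nov0} guarantees that the relevant expansions exist to the required order.
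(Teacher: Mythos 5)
Your overall strategy --- matching asymptotic expansions of \eqref{p17oct} at infinity along the imaginary axis and reading off the odd and even powers of $\mu$ to get $\chi_o(\a)A(\a)=\d_0$ and $B(\a)=\chi_e(\a)A(\a)$ --- is exactly the paper's, and your Toeplitz bookkeeping is correct. But the step you yourself flag as ``the main obstacle'' (showing that $i\pd_{\eta^{(k)}}\log e_{\cR}(p,\a)$ contributes nothing to the polynomial part, which would require differentiating the asymptotics of Corollary \ref{p30jan1} in the character variable) is left open, and it is precisely here that the paper does something different that you should adopt. Instead of working with the differential identity, the paper integrates \eqref{p17oct} divided by $e_{\cR}(p,\a)$ over $\xi\in[0,t_k]$ along the flow $\a(\xi)=\a-\eta^{(k)}\xi$. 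The derivative term then collapses to the difference $-i\log\bigl(e_{\cR}(p(iy),\a(t_k))/e_{\cR}(p(iy),\a)\bigr)$, to which Corollary \ref{p30jan1} applies \emph{directly} --- each logarithm is $O(1/y)$ --- while Corollary \ref{p30jan4} handles $\t^{(k)}_{\cR}(p(iy))-(iy)^{2k+1}\to 0$. One also uses \eqref{15may7} to rewrite $\mu\,\chi(p(\mu),\a)$ as $\bigl(m_+^{\a}(\mu^2)-m_+^{\a}(0)\bigr)/i$, so the expansion \eqref{20jan7} of the $m$-function supplies the coefficients $\chi_n(\a)$ to the required order. The resulting statement is that the integral over $[0,t_k]$ of the polynomial part vanishes for every $t_k$, which forces the integrand's polynomial coefficients to vanish and yields the same linear system you wrote down. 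So your proof is not wrong in outcome, but as written it has a genuine gap; replacing the pointwise differentiation argument by the integrated form closes it without any new estimates.
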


\begin{proof} Let  $\a(\xi)=\a-\eta^{(k)}\xi$.
We claim   that for an arbitrary $t_k$ the following integral tends to zero
\begin{equation}\label{pe17octc}
\lim_{\mu\to\infty}\int_0^{t_k}
\left(A_k(\mu^2,\a(\xi)) \frac{m_+^{\a(\xi)}(\mu^2)-m_+^{\a(\xi)}(0)}{i}-\mu^{2k+1}-B_k(\mu^2,\a(\xi))\right)d\xi= 0
\end{equation}
where $\mu$ goes to infinity  along the imaginary axis.
Indeed, by \eqref{p17oct} this integral is equal to 
\begin{equation*}\label{pe17octcc}
\left(\t^{(k)}_{\cR}(p(iy))-(iy)^{2k+1}\right)t_k-i\log\frac{e_{\cR}(p(iy),\a(t_k))}{e_{\cR}(p(iy),\a)}.
\end{equation*}
By Corollaries \ref{p30jan1} and \ref{p30jan4} we have \eqref{pe17octc}.

Thus, the polynomial part of the relation \eqref{p17oct} is of the form
$$
\begin{bmatrix} 1\\
B_0(\a)\\
0\\
B_1(\a)\\
\vdots\\
0\\
B_k(\a)
\end{bmatrix}=
\begin{bmatrix}1&  & & & &\\
\chi_0&1 & & & &\\
\chi_1&\chi_0&1 &  & &\\
\chi_2&\chi_1&\chi_0&1& &\\
\ddots &\ddots &\ddots &\ddots &\ddots &\\
\chi_{2k-1}& \dots&\chi_2&\chi_1&\chi_0&1&\\
\chi_{2k}&\dots & \dots&\chi_2&\chi_1&\chi_0&1
\end{bmatrix}(\a)
\begin{bmatrix} 1\\
0\\
\cA_1(\a)\\
0\\
\vdots\\
\cA_k(\a)\\
0
\end{bmatrix}.
$$
Separating odd and even parts, we have
$$
\begin{bmatrix} \d_0\\ B(\a)
\end{bmatrix}=\begin{bmatrix}\chi_o&\bs\chi_e \\
\chi_e&\chi_o
\end{bmatrix}(\a)
\begin{bmatrix} A(\a)\\ 0
\end{bmatrix},
$$
which corresponds to \eqref{15nov1}.
\end{proof}

\begin{lemma} Let $\chi'(\a)=i\pd_{\eta}\chi(\a)$. Then
\begin{equation}
\label{13nov1}
-\chi_o'(\a)=2\chi_e(\a)\chi_o(\a)-2\chi_0(\a)\chi_o(\a),
\end{equation}
\begin{equation}
\label{16nov1}
-\bs\chi_e'(\a)=\chi_o(\a)^2+s\chi_e(\a)^2-I-2\chi_0(\a)s\chi_e(\a).
\end{equation}
\end{lemma}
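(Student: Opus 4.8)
\emph{Proof proposal.} The plan is to first convert the statement into a single generating-function (Riccati-type) identity for $\chi(p,\a)=e_{\cR}(p,\a+\fj)/e_{\cR}(p,\a)$, and then to transfer that identity verbatim into the finite-dimensional shift algebra $\bbC[\bs_2]/(\bs_2^{2k+2})$ and read off its even and odd blocks, which will give \eqref{13nov1} and \eqref{16nov1} respectively.

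First I would divide \eqref{no2} by $e_{\cR}(p,\a)$ to obtain
\[
\t_{\cR}(p)+i\,\pd_\eta\log e_{\cR}(p,\a)=\mu_{\cR}(p)\chi(p,\a)-\chi_0(\a).
\]
Replacing $\a$ by $\a+\fj$ and using $2\fj=0_{\G^*}$, so that $e_{\cR}(p,\a+2\fj)=e_{\cR}(p,\a)$, gives the companion relation
\[
\t_{\cR}(p)+i\,\pd_\eta\log e_{\cR}(p,\a+\fj)=\frac{\mu_{\cR}(p)}{\chi(p,\a)}-\chi_0(\a+\fj).
\]
Subtracting these and multiplying through by $\chi(p,\a)$ eliminates $\t_{\cR}(p)$ and yields
\[
i\,\pd_\eta\chi(p,\a)=\mu_{\cR}(p)\bigl(1-\chi(p,\a)^2\bigr)+\bigl(\chi_0(\a)-\chi_0(\a+\fj)\bigr)\chi(p,\a).
\]

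Next I would expand at $p=p(\mu)$, $\mu=\mu_{\cR}(p)\to\infty$, via \eqref{no1}. Matching the $\mu^0$ term (the left-hand side is $O(1/\mu)$, while $\mu(1-\chi^2)=-2\chi_0(\a)+O(1/\mu)$) forces $\chi_0(\a+\fj)=-\chi_0(\a)$, hence $\chi_0(\a)-\chi_0(\a+\fj)=2\chi_0(\a)$ and
\[
\chi^2-1=\frac{1}{\mu}\bigl(2\chi_0(\a)\chi-i\,\pd_\eta\chi\bigr).
\]
Since $\chi(p(\mu),\a)$ is a scalar power series in $1/\mu$ and truncation $1/\mu\mapsto\bs_2$ (with $\bs_2^{2k+2}=0$) is a ring homomorphism, this lifts to the matrix identity
\[
\chi(\a)^2-I=\bs_2\bigl(2\chi_0(\a)\chi(\a)-\chi'(\a)\bigr),\qquad \chi'(\a)=i\,\pd_\eta\chi(\a),
\]
valid modulo $\bs_2^{2k+2}$; all quantities here are scalar polynomials in $\bs_2$, so they commute and the side of multiplication by $\bs_2$ is immaterial.

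Finally I would substitute the even--odd block forms, using that $\chi_o,\chi_e,\bs$ are commuting lower-triangular Toeplitz matrices. A direct computation gives
\[
\chi(\a)^2-I=\begin{bmatrix}\chi_o^2+\bs\chi_e^2-I & 2\bs\chi_e\chi_o\\ 2\chi_e\chi_o & \chi_o^2+\bs\chi_e^2-I\end{bmatrix},
\]
while the right-hand side $\bs_2(2\chi_0\chi-\chi')$ expands to
\[
\begin{bmatrix}\bs(2\chi_0\chi_e-\chi_e') & \bs(2\chi_0\chi_o-\chi_o')\\ 2\chi_0\chi_o-\chi_o' & \bs(2\chi_0\chi_e-\chi_e')\end{bmatrix}.
\]
Equating the lower-left blocks yields \eqref{13nov1}, and equating the diagonal blocks yields \eqref{16nov1}. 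I expect the only delicate point to be the passage from the scalar power-series relation to the matrix algebra: one must absorb the factor $\mu$ as $\bs_2$ on the correct side and confirm that truncation at order $\bs_2^{2k+1}$ commutes with squaring $\chi$, which it does precisely because truncation is a ring homomorphism.
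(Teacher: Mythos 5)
Your proposal is correct and follows essentially the same route as the paper: both derive the Riccati-type identity for $\chi(p,\a)$ from \eqref{no2} by comparing the relations at $\a$ and $\a+\fj$, transfer it to the matrix identity $-\bs_2\chi'(\a)=\chi(\a)^2-I-2\chi_0(\a)\bs_2\chi(\a)$ in the truncated shift algebra, and read off the even--odd blocks. The only (harmless) difference is that you justify $\chi_0(\a+\fj)=-\chi_0(\a)$ by matching the constant term of the expansion, whereas the paper uses this fact implicitly when writing $-2\chi_0(\a)$.
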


\begin{proof}
By \eqref{no2}
we have
$$
{-}i\pd_\eta\log \frac{e_{\cR}(p,\a+\fj)}{e_{\cR}(p,\a)}=\mu_{\cR}(p)\frac{e_{\cR}(p,\a+\fj)}{e_{\cR}(p,\a)}-
\mu_{\cR}(p)\frac{e_{\cR}(p,\a)}{e_{\cR}(p,\a+\fj)}-2\chi_0(\a),
$$
which corresponds to the matrix identity
$$
 - \chi(\a)^{-1}\bs_{2k+2}\chi'(\a)=\chi(\a)-\chi(\a+\fj)-2\chi_0(\a)\bs_{2k+2}
$$
or
$$
 -\bs_{2k+2}\chi'(\a)=\chi(\a)^2-I_{2k+2}-2\chi_0(\a)\bs_{2k+2}\chi(\a).
$$
Since in the even-odd decomposition we get
$$
\bs_{2k+2}\chi'(\a)=\begin{bmatrix} \bs\chi'_e& \bs\chi'_o \\
\chi'_o&\chi'_e
\end{bmatrix}(\a),\quad
\chi(\a)^2=
\begin{bmatrix}\chi_o^2+s\chi_e^2& 2\bs\chi_o\chi_e\\
2\chi_o\chi_e& \chi_o^2+\bs\chi_e^2
\end{bmatrix}(\a), 
$$
we obtain \eqref{13nov1}  and \eqref{16nov1}.
\end{proof}

\begin{lemma}\label{16novl1} 
Recall that  
\begin{equation}\label{13nov3}
A_k(\l,\a)=\begin{bmatrix}\l^{k}&\l^{k-1}&\dots&1
\end{bmatrix}\chi_o(\a)^{-1}\delta_0=\sum_{n=0}^k\lambda^{k-n}\cA_n(\a).
\end{equation}
In this notation
\begin{align}\label{14nov1}
(\t_{\cR}^{(k)}(p)+i\pd_{\eta^{(k)}})e_{\cR}(p,\a) 
= (\t_{\cR}(p)+i \pd_{\eta})A_k(\l_{\cR}(p),\a)e_{\cR}(p,\a) \nonumber\\
-\frac 3 2\{i\pd_\eta A_k(\l_{\cR}(p),\a)\}e_{\cR}(p,\a).
\end{align}
\end{lemma}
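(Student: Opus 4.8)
The plan is to transform the right-hand side of \eqref{p17oct} into the form \eqref{14nov1} by eliminating the factor $\mu_{\cR}(p)e_{\cR}(p,\a+\fj)$ in favor of the operator $\t_{\cR}(p)+i\pd_\eta$, and then to reduce everything to a single coefficient identity between $A_k$ and $B_k$. Abbreviating $A_k = A_k(\l_{\cR}(p),\a)$, $B_k = B_k(\l_{\cR}(p),\a)$ and $e_{\cR} = e_{\cR}(p,\a)$, I would first rewrite \eqref{no2} as $\mu_{\cR}(p)e_{\cR}(p,\a+\fj) = (\t_{\cR}(p)+i\pd_\eta)e_{\cR} + \chi_0(\a)e_{\cR}$, so that the leading term of \eqref{p17oct} becomes $A_k(\t_{\cR}(p)+i\pd_\eta)e_{\cR} + \chi_0 A_k e_{\cR}$. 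Because $\t_{\cR}(p)$ commutes with the multiplication operator $A_k$ and $\pd_\eta$ satisfies the Leibniz rule — acting only on the $\a$-dependent coefficients of $A_k$, never on the $\l_{\cR}(p)$-part — one has $A_k(\t_{\cR}(p)+i\pd_\eta)e_{\cR} = (\t_{\cR}(p)+i\pd_\eta)(A_k e_{\cR}) - (i\pd_\eta A_k)e_{\cR}$. Collecting terms, the right-hand side of \eqref{p17oct} equals
\[
(\t_{\cR}(p)+i\pd_\eta)(A_k e_{\cR}) - (i\pd_\eta A_k)e_{\cR} + (\chi_0 A_k - B_k)e_{\cR}.
\]
Granting the coefficient identity $B_k = \bigl(\chi_0 + \tfrac{i}{2}\pd_\eta\bigr)A_k$, one gets $\chi_0 A_k - B_k = -\tfrac12(i\pd_\eta A_k)$, and the two derivative terms combine to $-\tfrac32(i\pd_\eta A_k)e_{\cR}$, which is exactly \eqref{14nov1}.

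It therefore remains to establish $B_k = (\chi_0 + \tfrac{i}{2}\pd_\eta)A_k$, equivalently the vector identity $\cB_n = (\chi_0 + \tfrac{i}{2}\pd_\eta)\cA_n$ recorded in \eqref{20jan10}. I would argue as follows. By \eqref{15nov1}, $A = \chi_o^{-1}\d_0$ and $B = \chi_e A$. Rewriting \eqref{13nov1} as $\chi_e\chi_o = \chi_0\chi_o - \tfrac12\chi_o'$ with $\chi_o' := i\pd_\eta\chi_o$ and multiplying on the right by $\chi_o^{-1}$ gives $\chi_e = \chi_0 I - \tfrac12\chi_o'\chi_o^{-1}$, whence $B = \chi_0 A - \tfrac12\chi_o'\chi_o^{-1}A$. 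On the other hand, differentiating the constant relation $\chi_o A = \d_0$ yields $i\pd_\eta A = -\chi_o^{-1}\chi_o'A$. The decisive observation is that $\chi_o$, $\chi_o'$, and hence $\chi_o^{-1}$, are all lower-triangular Toeplitz matrices — polynomials in the single nilpotent shift $\bs$ — and therefore commute pairwise; consequently $\chi_o'\chi_o^{-1} = \chi_o^{-1}\chi_o'$, so $-\tfrac12\chi_o'\chi_o^{-1}A = \tfrac12(i\pd_\eta A)$ and $B = \chi_0 A + \tfrac12(i\pd_\eta A)$, as desired.

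I expect the commutation step to be the one genuine subtlety: without $\chi_o'\chi_o^{-1} = \chi_o^{-1}\chi_o'$ one is left with the wrong ordering $\chi_o^{-1}\chi_o'$ versus $\chi_o'\chi_o^{-1}$ and cannot convert the $-\tfrac12$ term into $+\tfrac12(i\pd_\eta A)$; the Toeplitz structure of $\chi_o$ and of its $\eta$-derivative is exactly what rescues the computation. Everything else is bookkeeping of the factors of $i$ — in particular the final combination $-(i\pd_\eta A_k) - \tfrac12(i\pd_\eta A_k) = -\tfrac32(i\pd_\eta A_k)$ that produces the coefficient $\tfrac32$ in \eqref{14nov1}. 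Note that \eqref{16nov1} is not needed for this lemma; only \eqref{13nov1} and \eqref{15nov1} enter.
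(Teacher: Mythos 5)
Your proof is correct and follows essentially the same route as the paper: derive the coefficient identity $B_k=(\chi_0+\tfrac{i}{2}\pd_\eta)A_k$ from \eqref{13nov1} and \eqref{15nov1}, then substitute into \eqref{p17oct} via \eqref{no2} and the Leibniz rule. The only difference is that you make explicit the commutation of the lower-triangular Toeplitz matrices $\chi_o$, $\chi_o'$, $\chi_o^{-1}$ (all polynomials in the nilpotent shift $\bs$), which the paper uses tacitly when it writes $\chi_o^{-1}\chi_e\d_0$ in place of $\chi_e\chi_o^{-1}\d_0$.
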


\begin{proof}
By \eqref{13nov1} we have $2\chi_o(\a)^{-1}\chi_e(\a)\delta_0=2\chi_0(\a)\chi_o(\a)^{-1}\d_0+(\chi_o(\a)^{-1}\d_0)'$.
 Therefore, by \eqref{15nov1} and definition \eqref{13nov3}, we obtain
\begin{equation}\label{13nov5}
 -B_k(\l_{\cR}(p),\a)e_{\cR}(p,\a)=-\chi_0(\a)A_k(\l_{\cR},\a)e_{\cR}(p,\a)-\frac 1 2
\{i\pd_{\eta}A_k(\l_{\cR},\a)\}e_{\cR}(p,\a).
\end{equation}

Using once again \eqref{15nov1}, this time for $A(\a)$, and \eqref{no2}, we have
\begin{align*}
 A_k(\l_{\cR}(p),\a)\mu_{\cR}(p)e_{\cR}(p,\a+\fj)-B_k(\l_{\cR}(p),\a)e_{\cR}(p,\a)
\\
=-\frac 1 2 \{i \pd_{\eta}\{A_k(\l_{\cR}(p),\a)\}e_{\cR}(p,\a)
 +\mu_{\cR}(p)A_k(\l_{\cR}(p),\a)e_{\cR}(p,\a+\fj)
 \\
 -\chi_0(\a)A_k(\l_{\cR}(p),\a)e_{\cR}(p,\a)
\\
=-\frac 1 2
\{i\pd_\eta A_k(\l_{\cR}(p),\a)\} e_{\cR}(p,\a)+A_k(\l_{\cR}(p),\a)(\t_{\cR}(p)+i\pd_\eta)e_{\cR}(p,\a).
\end{align*}
Thus, \eqref{p17oct} means \eqref{14nov1}.
\end{proof}

\begin{theorem}
For $\a\in\G^*$, let $P^{(k)}_\a$, be the following differential operator (w.r.t. $x$) 
$$
P^{(k)}_{\a}:=i\sum_{n=0}^k L_\a^{k-n}\left(A^\a_n(x)\pd_x+\frac 3 2 (A_n^\a)'(x)\right),
$$
where
$$
V^\a(x)=\cV(\a-\eta x),\quad 
A_n^\a(x)=\cA_n(\a-\eta x), \quad L_\a=-\pd_x^2+V^\a(x).
$$
Let
$$
f(p,\a)=\int_{0}^\infty e_{\cR}(p,\a-\eta x)e^{i\t_{\cR}(p)x}\hat f(x,\a) \dd x.
$$
Then
$$
(\t_{\cR}^{(k)}(p)+i\pd_{\eta^{(k)}})f(p,\a)=\int_{0}^\infty e_{\cR}(p,\a-\eta x)e^{i\t_{\cR}(p)x}
\{i\pd_{\eta^{(k)}}+P_k^\a\}\hat f(x,\a) \dd x.
$$
\end{theorem}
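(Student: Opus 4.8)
The plan is to differentiate $f$ under the integral sign and reduce everything to the pointwise identity \eqref{14nov1} of Lemma \ref{16novl1}, transporting the operator $(\t_{\cR}(p)+i\pd_\eta)$ off the generalized eigenfunction and onto a genuine $x$-derivative of $\hat f$. The algebraic engine is the \emph{translation identity}: for any character-automorphic $\phi(p,\b)$,
\begin{equation*}
\pd_x\big[\phi(p,\a-\eta x)e^{i\t_{\cR}(p)x}\big]=i\big[(\t_{\cR}(p)+i\pd_\eta)\phi\big](p,\a-\eta x)\,e^{i\t_{\cR}(p)x},
\end{equation*}
which is immediate from the chain rule $\pd_x\phi(p,\a-\eta x)=-(\pd_\eta\phi)(p,\a-\eta x)$ together with $\pd_x e^{i\t_{\cR}(p)x}=i\t_{\cR}(p)e^{i\t_{\cR}(p)x}$. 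Differentiating under the integral and applying the product rule in the character variable, I would split the left-hand side as $(\t_{\cR}^{(k)}(p)+i\pd_{\eta^{(k)}})f=T_1+T_2$, where $T_2=\int_0^\infty e_{\cR}(p,\a-\eta x)e^{i\t_{\cR}(p)x}\,[i\pd_{\eta^{(k)}}\hat f(x,\a)]\,\dd x$ already accounts for the $i\pd_{\eta^{(k)}}$ term on the right, and where $T_1=\int_0^\infty[(\t_{\cR}^{(k)}+i\pd_{\eta^{(k)}})e_{\cR}](p,\a-\eta x)e^{i\t_{\cR}(p)x}\hat f(x,\a)\,\dd x$ must be identified with the action of $P^{(k)}_\a$.

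For $T_1$ I substitute \eqref{14nov1} with character argument $\b=\a-\eta x$, so the integrand becomes $(\t_{\cR}+i\pd_\eta)[A_k(\l_{\cR},\b)e_{\cR}(p,\b)]-\tfrac32[i\pd_\eta A_k(\l_{\cR},\b)]e_{\cR}(p,\b)$. Applying the translation identity to $\phi=A_k(\l_{\cR}(p),\cdot)e_{\cR}(p,\cdot)$ rewrites the first summand as $-i\pd_x$ of $A_k(\l_{\cR}(p),\a-\eta x)e_{\cR}(p,\a-\eta x)e^{i\t_{\cR}(p)x}$; one integration by parts then yields $i\int_0^\infty A_k(\l_{\cR}(p),\a-\eta x)e_{\cR}(p,\a-\eta x)e^{i\t_{\cR}(p)x}\pd_x\hat f\,\dd x$. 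In the second summand I use $(\pd_\eta\cA_n)(\a-\eta x)=-(A_n^\a)'(x)$, which holds because $\pd_x A_n^\a(x)=\pd_x\cA_n(\a-\eta x)=-(\pd_\eta\cA_n)(\a-\eta x)$, converting $-\tfrac32 i\pd_\eta A_k$ into $+\tfrac32 i\sum_n (A_n^\a)'(x)\l_{\cR}(p)^{k-n}$. Expanding $A_k(\l_{\cR}(p),\a-\eta x)=\sum_{n=0}^k A_n^\a(x)\l_{\cR}(p)^{k-n}$, pulling each $x$-independent factor $\l_{\cR}(p)^{k-n}$ out of the integral, and invoking the multiplication relation $\cF L_V=\l\,\cF$ of Theorem \ref{thm:DY} exactly $k-n$ times to replace $\l_{\cR}(p)^{k-n}$ by $L_\a^{k-n}$ acting in the $x$-variable, I would collect the two contributions into
\begin{equation*}
T_1=i\sum_{n=0}^k\int_0^\infty e_{\cR}(p,\a-\eta x)e^{i\t_{\cR}(p)x}\,L_\a^{k-n}\Big[A_n^\a(x)\pd_x\hat f+\tfrac32(A_n^\a)'(x)\hat f\Big]\dd x,
\end{equation*}
which is precisely $\int_0^\infty e_{\cR}e^{i\t_{\cR}x}P^{(k)}_\a\hat f\,\dd x$ by the definition of $P^{(k)}_\a$. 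Adding $T_2$ gives the asserted representation of $\t_{\cR}^{(k)}+i\pd_{\eta^{(k)}}$.

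The main obstacle is analytic rather than algebraic. The integration by parts produces a term at $x=\infty$, which vanishes because $\Im\t_{\cR}(p)>0$ forces $e^{i\t_{\cR}(p)x}$ to decay for interior $p$, and a boundary term $i A_k(\l_{\cR}(p),\a)e_{\cR}(p,\a)\hat f(0,\a)$ at $x=0$. I would therefore first prove the identity on the dense class $\hat f\in C_c^\infty(0,\infty)$, where $\hat f(0,\a)=0$ kills this term, and then extend by continuity. One must also justify differentiation under the integral and the repeated intertwining with $L_\a$, i.e.\ check that $A_n^\a(x)\pd_x\hat f$ and $(A_n^\a)'(x)\hat f$ stay in the class on which $\l_{\cR}(p)^{k-n}$ and $L_\a^{k-n}$ correspond; this is exactly where the regularity of the coefficients $\cA_n$ furnished by the expansion \eqref{20jan7} under the $k$-th gap length condition \eqref{21nov0} enters, together with the smoothness of $V^\a$.
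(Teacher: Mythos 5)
Your proposal follows essentially the same route as the paper's proof: split off the $i\pd_{\eta^{(k)}}\hat f$ term, substitute the identity \eqref{14nov1} of Lemma \ref{16novl1}, use the relation $-i\pd_x\bigl[A_k(\l_{\cR}(p),\a-\eta x)e_{\cR}(p,\a-\eta x)e^{i\t_{\cR}(p)x}\bigr]=(\t_{\cR}(p)+i\pd_\eta)A_k\,e_{\cR}\,e^{i\t_{\cR}(p)x}$ to integrate by parts onto $\hat f$, convert $i\pd_\eta\cA_n$ into $-i\pd_x A_n^\a$, and intertwine $\l_{\cR}(p)^{k-n}$ with $L_\a^{k-n}$ via the Fourier transform. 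Your explicit treatment of the boundary terms at $x=0$ and $x=\infty$ (restricting first to $\hat f\in C_c^\infty(0,\infty)$ and extending by density) is a point the paper passes over silently, but it does not change the argument.
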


\begin{proof}
We have
\begin{align*}
(\t_{\cR}^{(k)}(p)+i\pd_{\eta^{(k)}})f(p,\a)=
\int_{0}^\infty e_{\cR}(p,\a-\eta x)e^{i\t_{\cR}(p)x}\{i\pd_{\eta^{(k)}}\hat f(x,\a)\} \dd x\\
+\int_{0}^\infty \{(\t_{\cR}^{(k)}(p)+i\pd_{\eta^{(k)}})e_{\cR}(p,\a-\eta x)\}e^{i\t_{\cR}(p)x}\hat f(x,\a) \dd x.
\end{align*}
The computations for the second term  are based on Lemma \ref{16novl1}, see \eqref{14nov1}. First,  we note that
\begin{align*}
-i\pd_x A_k(\l_{\cR}(p),\a-\eta x) e_{\cR}(p,\a-\eta x) e^{i\t_{\cR}(p)x}
\\
=
(\t_{\cR}(p)+i\pd_\eta) A_k(\l_{\cR}(p),\a-\eta x) e_{\cR}(p,\a-\eta x) e^{i\t_{\cR}(p)x}.
\end{align*}
Therefore
\begin{align*}
\int_0^\infty\{(\t_{\cR}(p)+i\pd_{\eta})A_k(\l_{\cR},\a-\eta_x) e_{\cR}(p,\a-\eta x)\}e^{i\t_{\cR}(p)x}\hat f(x,\a)\dd x
\\
=\int_0^\infty\{A_k(\l_{\cR},\a-\eta_x) e_{\cR}(p,\a-\eta x)\}e^{i\t_{\cR}(p)x}i\pd_{x}\hat f(x,\a)\dd x.
\end{align*}
On the other hand, for a function  $\hat g(x,\a)$ in the domain, we have
\begin{align*}
\int_{0}^\infty \l_{\cR}(p)^n \cA_n(\a-\eta x)e_{\cR}(p,\a-\eta x)e^{i\t_{\cR}(p)x}\hat g(x,\a) \dd x
\\
=\int_{0}^\infty \l_{\cR}(p)^n e_{\cR}(p,\a-\eta x)e^{i\t_{\cR}(p)x}\{A^\a_n(x)\hat g(x,\a)\} \dd x
\\
=\int_{0}^\infty \{L_\a^n e_{\cR}(p,\a-\eta x)e^{i\t_{\cR}(p)x}\}\{A^\a_n(x)\hat g(x,\a)\} \dd x
\\
=\int_{0}^\infty e_{\cR}(p,\a-\eta x)e^{i\t_{\cR}(p)x}\{L_{\a}^n A^\a_n(x)\hat g(x,\a)\} \dd x.
\end{align*}

Finally, we note that
$
-i\pd_x A_n^\a(x)=i\pd_\eta\cA_n(\a-\eta_x),
$
and, basically, repeat the above computations
\begin{align*}
\int_{0}^\infty \l_{\cR}(p)^n \{i\pd_\eta\cA_n(\a-\eta x)\}e_{\cR}(p,\a-\eta x)e^{i\t_{\cR}(p)x}\hat f(x,\a) \dd x
\\
=\int_{0}^\infty \l_{\cR}(p)^n e_{\cR}(p,\a-\eta x)e^{i\t_{\cR}(p)x}\{-i\pd_x A^\a_n(x)\}\hat f(x,\a) \dd x
\\
=\int_{0}^\infty \{L_\a^n e_{\cR}(p,\a-\eta x)e^{i\t_{\cR}(p)x}\}\{-i\pd_xA^\a_n(x)\}\hat f(x,\a) \dd x
\\
=\int_{0}^\infty e_{\cR}(p,\a-\eta x)e^{i\t_{\cR}(p)x} L_{\a}^n \{-i\pd_x A^\a_n(x)\}\hat f(x,\a) \dd x.
\end{align*}

Combining these three remarks, we obtain
\begin{align*}
\int_0^\infty\{(\t_{\cR}(p)+i\pd_{\eta})A_k(\l_{\cR},\a-\eta x) e_{\cR}(p,\a-\eta x)\}e^{i\t_{\cR}(p)x}\hat f(x,\a)\dd x
\\
-\frac 3 2\int_0^\infty\{i\pd_\eta A_k(\l_{\cR}(p),\a)\}e_{\cR}(p,\a)e^{i\t_{\cR}(p)x}\hat f(x,\a)\dd x
\\
=\int_0^\infty e_{\cR}(p,\a-\eta x)e^{i\t_{\cR}(p)x}P_k^\a\hat f(x,\a)\dd x.
\end{align*}

\end{proof}

\begin{remark}
We note that $P_k^\a$ is self-adjoint in $L^2$ on the whole axis, and therefore can be rewritten into the form
$$
(P_k^{\a})^*=\sum_{n=0}^k \left(i\pd_x A^\a_n(x)-\frac {3i} 2 (A_n^\a)'(x)\right)L_\a^{k-n}=i\sum_{n=0}^k \left(A^\a_n(x)\pd_x-\frac 1 2 (A_n^\a)'(x)\right)L_\a^{k-n}.
$$
\end{remark}

\subsection{The $k$-th gap length condition}

Recall that the function $m_+^\a$ is given as, see (2.9)
$$
m_+^{\a(D)}(\l)=-\frac 1{2R_D(\l)}-\sum_{j\ge 1}\frac{\s^D_j\e_j}{2(1-\l_j/\l)}+m_+^{\a(D)}(0)
$$
where
$$
\s_k^D=2\sqrt{{-\l_k(1-a_k/\l_k)(1-b_k/\l_k)}}\prod_{j\not=k}\frac{\sqrt{(1-a_j/\l_k)(1-b_j/\l_k)}}{1-\l_j/\l_k},
$$
and in the asymptotic expansion we have $\chi_0(\a)=im_+^\a(0)$ and
$$
m_+^\a(\mu^2)=i\mu+i\frac{\chi_1(\a)}{\mu}+i\frac{\chi_2(\a)}{\mu^2}+i\frac{\chi_3(\a)}{\mu^3}+\dots
$$
That is, for the even terms we get
\begin{equation}\label{27jan1}
\chi_{2k}(\a(D))=\frac i 2\sum_{j\ge 1}\s_j^D\l_j^{k}, \quad D=\{(\l_j,\e_j)\}.
\end{equation}
Note, that we have shown continuity of $\chi_0(\a)$ in a quite fashionable way, see Section 4.2.

The situation with 
the odd terms is much simpler. They can be given in terms
of the function
$$
-\frac1 {2i\mu R_D(\l)}=e^{\int {\frac{\ff^D(\xi)\dd\xi}{\xi-\l}} }=1+\frac{\chi_1(\a(D))}{\l}+\frac{\chi_3(\a(D))}{\l^2}+\dots,
$$
where
$$
2\ff^D(\xi)= \begin{cases}
-1, & \xi\in (a_j,\l_j)\\
1, & \xi\in(\l_j,b_j)\\
0,& \text{otherwise}
\end{cases}
$$
Thus $\chi_{2k+1}(\a(D))$ are given as the \textit{standard polynomials} of the moments
$$
\tau_m(\a(D))=\int \xi^m \ff^D(\xi) \dd\xi
$$
Under $\left(k+\frac 1 2\right)$-GLC condition the right hand sides are continuous functions in $D\in\cD(E)$. Therefore  
$\tau_m(\a)$, and respectively $\chi_{2m+1}(\a)$ are continuous in $\a\in\G^*$ for all $m\le k$.

In this section we prove that under the condition ($k$-GLC) the coefficient $\chi_{2k}(\a)$ is well-defined as the corresponding term in asymptotics for $m_+^\a(\l)$, that is, 
\eqref{27jan1} has sense.
\begin{lemma}
Assume that \eqref{21nov0} holds.
To an arbitrary divisor $D$ we associate $\sigma_j=\s_j^D$ by \eqref{eq:intmplus2}.
Then
\begin{equation}\label{29jan2}
\sup_{D\in\cD(E)}\sum_{j\ge 1}\sigma_j\l_j^k<\infty.
\end{equation}
That is, the moment $\chi_{2k}(\a)$ is well defined as a bounded function on $\G^*$.
\end{lemma}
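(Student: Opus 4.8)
The plan is to reduce the estimate to the $(k)$-GLC sum in \eqref{21nov0} by separating the two structurally different ingredients of $\sigma_j$: the local factor attached to the gap $(a_j,b_j)$, which carries the gap-length smallness, and the infinite product over the remaining gaps, which must be shown to be uniformly bounded. Write
\[
\sigma_j\l_j^k = 2\,\frac{\sqrt{(\l_j-a_j)(b_j-\l_j)}}{\sqrt{\l_j}}\,\l_j^{k}\,P_j,\qquad P_j:=\prod_{l\neq j}\frac{\sqrt{(\l_j-a_l)(\l_j-b_l)}}{\l_j-\l_l}.
\]
The local factor is bounded at once by $\sqrt{(\l_j-a_j)(b_j-\l_j)}\le\tfrac12(b_j-a_j)$ and $\l_j^{k-1/2}\le b_j^{k-1/2}$, giving $\sigma_j\l_j^k\le (b_j-a_j)b_j^{k-1/2}P_j$. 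Granting a uniform bound $\sup_{j,D}P_j\le C$, the claim follows from the elementary inequality $t^{k+1/2}\le t$ for $t\in(0,1]$ and $k\ge 1$ (applied with $t=a_j/b_j$), which yields $(b_j-a_j)b_j^{k-1/2}\le b_j^{k+1/2}-a_j^{k+1/2}$; summing and invoking \eqref{21nov0} gives $\sum_j\sigma_j\l_j^k\le C\sum_j(b_j^{k+1/2}-a_j^{k+1/2})<\infty$ uniformly in $D$, and then $\chi_{2k}(\a)=\tfrac i2\sum_j\sigma_j\e_j\l_j^k$ is absolutely convergent and bounded by the same constant.

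Thus everything is reduced to the uniform bound on $P_j$. First I would remove the dependence on the divisor: each factor $\frac{\sqrt{(\l_j-a_l)(\l_j-b_l)}}{|\l_j-\l_l|}$ has numerator independent of $\l_l$ and denominator minimized when $\l_l$ sits at the endpoint of $(a_l,b_l)$ nearest $\l_j$. Replacing every $\l_l$ by that nearest endpoint therefore bounds $P_j$ by a quantity $\widetilde P_j(\l_j)$ depending only on $E$ and on $\l_j\in[a_j,b_j]$, with
\[
\log\widetilde P_j(\l_j)=\tfrac12\sum_{l\neq j}\log\Bigl(1+\tfrac{|I_l|}{\dist(\l_j,I_l)}\Bigr),\qquad I_l=(a_l,b_l).
\]

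The hard part is bounding this product uniformly in $j$, and the genuine difficulty is entirely local: the two gaps $I_{l_-},I_{l_+}$ bordering $(a_j,b_j)$ are separated from $\l_j$ only by the (possibly very thin) bands adjacent to $(a_j,b_j)$, so their individual factors can be arbitrarily large. The key idea is not to bound $P_j$ by itself but to combine these two factors with the vanishing local factor $\sqrt{(\l_j-a_j)(b_j-\l_j)}=\sqrt{\l_j-a_j}\,\sqrt{b_j-\l_j}$, pairing $\sqrt{\l_j-a_j}$ with the left-adjacent factor and $\sqrt{b_j-\l_j}$ with the right-adjacent one and using $\l_j-a_j\le\l_j-b_{l_-}$ and $b_j-\l_j\le a_{l_+}-\l_j$ to cancel the singular denominators. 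The delicate point — and the one I expect to be the main obstacle — is to carry out this cancellation \emph{without} spending all of the gap-length smallness that the local factor is supposed to provide; the remaining, well-separated gaps are easier, since there $\log(1+|I_l|/\dist(\l_j,I_l))\le |I_l|/\dist(\l_j,I_l)$ and the resulting series $\sum_l |I_l|/\dist(\l_j,I_l)$ can be controlled uniformly in $j$ by the Widom condition (PW) (equivalently $\sum_j G(\xi_j,\l_0)<\infty$) together with \eqref{21nov0}.

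Because these local cancellations are automatic in the classical picture, the cleanest rigorous route is likely to pass through the finite-gap truncations $\cS_{+,N}$ of Section 4.1, where $\sigma_j^{(N)}$ and the identity \eqref{27jan1} are classical. There I would establish the estimate $\sum_{j\le N}\sigma_j^{(N)}\l_j^k\le C\sum_{j\le N}(b_j^{k+1/2}-a_j^{k+1/2})$ with $C$ independent of $N$ and of the divisor, and then let $N\to\infty$ using the convergence $e_{\a_N}\to e_\a$ together with monotone convergence. The crux in either approach is the same: proving that the product constant $C$ is genuinely independent of both $N$ and the divisor, which is exactly the uniform control of $P_j$ above and the precise point at which the Widom condition must be used.
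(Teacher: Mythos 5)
Your reduction leaves the entire difficulty of the lemma unresolved. After the factorization $\sigma_j\l_j^k\le (b_j-a_j)b_j^{k-1/2}P_j$, everything hinges on the uniform bound $\sup_{j,D}P_j\le C$, which you first assume (``granting a uniform bound'') and then immediately undercut: as you yourself observe, the two gaps adjacent to $(a_j,b_j)$ contribute factors to $P_j$ that can be arbitrarily large when the separating bands are thin, so $\sup_{j,D}P_j$ need not be finite at all, and the compensating cancellation against $\sqrt{(\l_j-a_j)(b_j-\l_j)}$ is described but never carried out. The appeal to (PW) for the tail $\sum_{l}|I_l|/\dist(\l_j,I_l)$ is also unsubstantiated: the Widom condition controls sums of Green's function values at a \emph{fixed} point, whereas here the evaluation point $\l_j$ ranges over all gaps, which is precisely where uniformity is delicate. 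The finite-gap truncation route inherits the same unproved claim (a constant independent of $N$ and $D$). So the proposal is a reduction of the lemma to an equally hard statement, not a proof.

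The paper's argument avoids estimating the product $P_j$ altogether. It introduces the Nevanlinna-class function $Z(\mu)$ of \eqref{29jan0} in the variable $\mu=\sqrt{\l}$, whose boundary argument is the indicator of $\bigcup_j\bigl((\sqrt{\l_j},\sqrt{b_j})\cup(-\sqrt{\l_j},-\sqrt{a_j})\bigr)$. Its additive representation has point masses $\rho_j^{\pm}$ at $\pm\sqrt{\l_j}$, and the explicit identity $\rho_j^+\rho_j^-\asymp\sigma_j^2$ combined with the arithmetic--geometric mean inequality gives $\sigma_j\le C(\rho_j^++\rho_j^-)$ for $a_j$ bounded away from $0$ (the remaining terms being absorbed into the bound $\sup_\a\chi_0(\a)<\infty$). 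The quantity $\sum_j(\rho_j^++\rho_j^-)\l_j^k$ is then the $2k$-th moment of the representing measure of $Z$, which is read off from the asymptotic expansion \eqref{29jan1}; its coefficients are polynomials in the moments $\int\xi^{2m}\k(\xi)\,\dd\xi\le\sum_j(b_j^{m+\frac12}-a_j^{m+\frac12})/(2m+1)$, $m\le k$, hence bounded uniformly in $D$ directly by \eqref{21nov0}. This exponential-to-additive conversion is the idea missing from your proposal: it converts the uncontrollable infinite product into residues of a Herglotz-type function whose moments depend only on the total gap lengths, not on where the $\l_j$ sit. To salvage your approach you would need the combined bound on $\sqrt{(\l_j-a_j)(b_j-\l_j)}\,P_j$ uniformly in $j$ and $D$, which is essentially equivalent to reproving the paper's estimate.
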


\begin{proof}
We define a Nevanlinna class function $Z(\mu)$ by its argument $\pi\k(\xi)$ on the real axis, where
$$
\k(\xi)=
\begin{cases}
0,&\xi\in\pm\sqrt{E}:=\{\xi:\ \xi^2\in E\}\\
0,& \xi\in(\sqrt{a_j},\sqrt{\l_j})\\
1& \xi\in (\sqrt{\l_j},\sqrt{b_j})\\
0,& \xi\in (-\sqrt{b_j},-\sqrt{\l_j})\\
1,& \xi\in (-\sqrt{\l_j},-\sqrt{a_j})
\end{cases}
$$
To be more precise, with   a suitable choice of the positive multiplier, we have
\begin{equation}\label{29jan0}
Z(\mu)=\prod_{j\ge 1}\frac{\mu-\sqrt{b_j}}{\mu-\sqrt{\l_j}}\frac{\mu+\sqrt{a_j}}{\mu+\sqrt{\l_j}}= 
e^{\int_{\bbR\setminus\pm\sqrt{E}}\k(\xi)\frac{d\xi}{\xi-\mu}}
\end{equation}
Since
$$
\int_{\bbR\setminus\pm\sqrt{E}}\xi^{2k}\k(\xi){\dd\xi} =
\sum_{j\ge 1}\frac{b_j^{k+\frac 1 2}-a_j^{k+\frac 1 2}}{2k+1},
$$
by \eqref{21nov0},  the function $Z(\mu)$ has a \textit{real} asymptotic expansion at infinity (along the imaginary axis) up to the term 
$\mu^{-(2k+1)}$, i.e.,
\begin{equation}\label{29jan1}
\lim_{\mu\to\infty}\mu^{2k+1}\{Z(\mu)-1+\frac{\tau_0}{\mu}+\dots+\frac{\tau_{2k-1}}{\mu^{2k}}\}=-\tau_{2k}.
\end{equation}

An additive representation of this function is of the form
$$
Z(\mu)=1+\sum_{j\ge 1}\left(\frac{\rho_j^+}{\sqrt{\l_j}-\mu}-\frac{\rho_j^-}{\sqrt{\l_j}+\mu}\right),
$$
where
$$
\rho_n^+=(\sqrt{b_n}-\sqrt{\l_n})\left|\prod_{j\not=n}\frac{\sqrt{\l_n}-\sqrt{b_j}}{\sqrt{\l_n}-\sqrt{\l_j}}\frac{\sqrt{\l_n}+\sqrt{a_j}}{\sqrt{\l_n}+\sqrt{\l_j}}
\right|
$$
and
$$
\rho_n^-=(\sqrt{\l_n}-\sqrt{a_n})\left|\prod_{j\not=n}\frac{\sqrt{\l_n}+\sqrt{b_j}}{\sqrt{\l_n}+\sqrt{\l_j}}\frac{\sqrt{\l_n}-\sqrt{a_j}}{\sqrt{\l_n}-\sqrt{\l_j}}
\right|.
$$

Note that
$$
\rho_n^+\rho_n^-=\frac{({b_n}-{\l_n})({\l_n}-{a_n})}{\l_n(1+\sqrt{b_n/\l_n})(1+\sqrt{a_n/\l_n})}
\left|\prod_{j\not=n}\frac{{\l_n}-{b_j}}{{\l_n}-{\l_j}}\frac{{\l_n}-{a_j}}{\l_n-{\l_j}}
\right|
$$
We fix $C$ such that $b_n-a_n\le 1/2$ for $a_n\ge C$. Then
$$
\sqrt{b_n/\l_n}\le b_n/\l_n\le b_n/a_n\le 1/(2a_n)+1\le 1/(2C)+1.
$$
For such values we get
$$
\frac C{1+4C}\le \frac 1{(1+\sqrt{b_n/\l_n})(1+\sqrt{a_n/\l_n})}.
$$
That is
$$
\frac C{1+4C}\sigma_n\le 2\sqrt{\rho_n^+\rho_n^-}\le \rho_n^++\rho_n^-.
$$
Due to the asymptotic expansion \eqref{29jan1} for $Z(\mu)$, see \eqref{29jan0},
$$
\sup_{D\in\cD(E)}\sum_{j\ge 1}(\rho_j^++\rho_j^-)\l_j^k<\infty.
$$
On  the other hand $\chi_0(\a)$ is continuous on $\G^*$, that is, 
$$
\sup_{D\in\cD(E)}\sum \s_j^D=\sup_{\a\in\G^*}\chi_0(\a)<\infty.
$$
Thus we have \eqref{29jan2} by
$$
\sum_{j\ge 1}{\sigma_j\lambda_j^k}\le 
\sum_{\l_j\le C}{\sigma_j\lambda_j^k}+
\sum_{\l_j\ge a_j\ge C}{\sigma_j\lambda_j^k}\le
C^k \sum_{j\ge 1}\s_j+\frac{C}{1+4C}\sum_{j\ge 1}(\rho_j^++\rho_j^-)\l_j^k.
$$
\end{proof}

\bibliographystyle{amsplain}
\providecommand{\MR}[1]{}
\providecommand{\bysame}{\leavevmode\hbox to3em{\hrulefill}\thinspace}
\providecommand{\MR}{\relax\ifhmode\unskip\space\fi MR }
\providecommand{\MRhref}[2]{%
  \href{http://www.ams.org/mathscinet-getitem?mr=#1}{#2}
}
\providecommand{\href}[2]{#2}

\end{document}